\documentclass[11pt,reqno]{amsart}

\usepackage[margin=1.1in]{geometry}
\usepackage{amsmath,amssymb,amsthm,mathtools}
\usepackage{enumitem}
\usepackage{booktabs}
\usepackage{xcolor}
\usepackage{array}
\usepackage[colorlinks=true,linkcolor=blue,citecolor=blue]{hyperref}
\usepackage{appendix}
\theoremstyle{plain}
\newtheorem{theorem}{Theorem}[section]
\newtheorem{proposition}[theorem]{Proposition}
\newtheorem{lemma}[theorem]{Lemma}
\newtheorem{corollary}[theorem]{Corollary}
\newtheorem{conjecture}[theorem]{Conjecture}

\theoremstyle{definition}
\newtheorem{definition}[theorem]{Definition}
\newtheorem{remark}[theorem]{Remark}

\DeclareMathOperator{\Ric}{Ric}
\DeclareMathOperator{\Area}{Area}
\DeclareMathOperator{\diam}{diam}
\DeclareMathOperator{\rank}{rank}
\DeclareMathOperator{\supp}{supp}
\DeclareMathOperator{\vol}{vol}

\newcommand{\R}{\mathbb{R}}
\newcommand{\Z}{\mathbb{Z}}
\newcommand{\Sph}{\mathbb{S}}
\newcommand{\Cc}{\mathcal{C}}
\newcommand{\eps}{\varepsilon}
\newcommand{\la}{\lambda}
\newcommand{\dd}{\,dA_0}
\newcommand{\round}{g_0}

\title{Minimizers of Laplace eigenvalues under a lower curvature bound}

\author{Aditya Tiwari}
\address{Department of Mathematical Sciences, Indian Institute of Science
Education and Research (IISER) Mohali, Sector 81, Knowledge City, S.A.S.~Nagar,
Manauli, Punjab 140306, India}
\email{adityatiwari@iisermohali.ac.in}

\subjclass[2020]{Primary 58J50; Secondary 34B24, 35P15, 53C23}
\keywords{Laplace eigenvalues, Lichnerowicz--Obata theorem, lower curvature bounds,
Alexandrov surfaces}

\begin{document}

\begin{abstract}
We prove a sharp comparison, with Obata-type rigidity, for all Neumann eigenvalues
of one-dimensional $\mathrm{CD}(1,2)$ spaces against the Legendre model, under a
convexity condition on the density. It follows that minimizers of the $k$-th Laplace
eigenvalue among closed surfaces of Gaussian curvature at least $1$ cannot collapse
in the measured Gromov--Hausdorff completion, for every $k\ge2$. We also give a
variational proof that smooth minimizers are round.
\end{abstract}

\maketitle
\section{Introduction}\label{sec:intro}

Let $(M,g)$ be a closed surface with Gaussian curvature $K\ge1$. The
Lichnerowicz--Obata theorem \cite{Lichnerowicz,Obata} gives $\lambda_1\ge2$ for the
first nonzero Laplace eigenvalue, with equality only for the round unit sphere. Is the
round sphere extremal for all eigenvalues, and not only the first?

To make this precise, let
\[
\Cc=\bigl\{\,(M,g)\ :\ M\text{ is a closed smooth surface},\ K_g\ge1\,\bigr\}
\]
and, for each $k\ge1$, let
\begin{equation}\label{eq:functional}
    f_k\colon (M,g) \longmapsto \lambda_k(M,g),
\end{equation}
the $k$-th Laplace eigenvalue, eigenvalues being listed with multiplicity. Those of
$(\Sph^2,\round)$ are $\ell(\ell+1)$ for $\ell\ge0$, with multiplicity $2\ell+1$, so
Lichnerowicz--Obata says that the round sphere minimizes $f_1$ on $\Cc$. We ask whether
it minimizes every $f_k$, that is,
\begin{equation}\label{eq:comparison}
\lambda_k(M,g)\ \ge\ \lambda_k(\Sph^2)\qquad\text{for all }(M,g)\in\Cc,\ k\ge1?
\end{equation}

In dimension $n$, the analogous question is whether
$\lambda_k(M^n)\ge\lambda_k(\Sph^n)$ whenever $\Ric\ge(n-1)g$. For $k=1$, the
Lichnerowicz--Obata theorem again yields the same conclusion. For higher
eigenvalues the picture is now complete. In dimensions four and above the
inequality fails; Donnelly \cite{Donnelly} gave a counterexample in dimension four, which is adaptable in every
dimension $n\ge4$. See also Aryan \cite{Aryan} for
counterexamples among metrics on $\Sph^n$. In dimension three the inequality
also fails, by an explicit conformal perturbation of the round metric on $\Sph^3$
due to Lin, Wang and Xu \cite[Thm.~16.1]{LinWangXu}. In dimension two, by
contrast, it holds; the same authors prove
$\la_i(\Sph^2,g)\ge\la_i(\Sph^2,\round)$ for every smooth $g$ with $K_g\ge1$,
with rigidity at any single index \cite[Thm.~1.8]{LinWangXu}, and establish a
counting comparison, with rigidity at a single round threshold, for Alexandrov
two-spheres of curvature $\ge1$ \cite[Thm.~1.10]{LinWangXu}. This problem has
also been studied for several families of manifolds in dimensions two and three
satisfying \eqref{eq:comparison}; see \cite{ATM2,ATM1,ATM3}.

Since $\Cc$ is not closed under measured Gromov--Hausdorff (mGH) convergence
(Remark~\ref{rem:nonclosed}), minimizers must be sought in its mGH completion.
Besides two-dimensional Alexandrov surfaces, including singular metrics such as
cone metrics and spindles, the completion contains collapsed Alexandrov spaces of
Hausdorff dimensions one and zero. Firstly, we prove a one-dimensional comparison theorem, which will be used to rule out collapsed spaces as minimizers beyond the first eigenvalue.

For a density $\varphi$ on
$[0,D]$ we write $X_\varphi=([0,D],|\cdot|,\varphi\,dt)$ and
$0=\mu_0(\varphi)<\mu_1(\varphi)<\cdots$ for the Neumann spectrum of
$\mathsf L_\varphi u=-\varphi^{-1}(\varphi u')'$ on $X_\varphi$.

\begin{theorem}\label{thm:intro-1d}
Let $D>0$ and let $\varphi\colon[0,D]\to[0,\infty)$ be continuous, positive on $(0,D)$,
and such that
\begin{equation}\label{eq:intro-density}
\varphi''+\varphi\ \le\ 0\qquad\text{in the sense of distributions on }(0,D).
\end{equation}
Then
\begin{equation}\label{eq:main-ineq}
\mu_j(\varphi)\;\ge\; j(j+1)\qquad\text{for every } j\ge 0 .
\end{equation}
If equality holds for some $j\ge1$, then $D=\pi$ and $\varphi$ is a positive constant
multiple of $\sin$, so that $X_\varphi$ is the Legendre model.
\end{theorem}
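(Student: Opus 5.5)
The plan is to prove a more general statement by induction on $j$, using a commutation (``differentiate the eigenfunction'') argument. This is the one-dimensional shadow of the Bochner formula, and it is exact on the Legendre model, where $\frac{d}{dt}P_j(\cos t)$ is a Gegenbauer polynomial with weight $\sin^3$. Concretely, for $m\ge1$ I would consider the weighted problem with density $\varphi^m$, with Neumann spectrum $\mu_j(\varphi^m)$ for $-\varphi^{-m}(\varphi^m u')'$. I would aim to show
\[
\mu_j(\varphi^m)\ \ge\ j(j+m)\qquad (j\ge0),
\]
whenever $\varphi$ satisfies \eqref{eq:intro-density}. The case $m=1$ is \eqref{eq:main-ineq}, and the model case $\varphi=\sin$, $D=\pi$ gives the Gegenbauer eigenvalues $j(j+m)$ exactly. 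The case $j=0$ is trivial.

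For the inductive step, write $h=\varphi'/\varphi$. By the min--max principle, $\mu_j(\varphi^m)$ for $j\ge1$ equals the $(j-1)$-st Dirichlet eigenvalue of the operator acting on $v=u'$. This holds because differentiating $u''+mhu'+\mu u=0$ gives $v''+mhv'+mh'v+\mu v=0$, with $v$ vanishing at the endpoints. The quadratic form of that operator is $Q(v)=\int(v'^2-mh'v^2)\varphi^m$. I then substitute $v=\varphi\,g$ and integrate the cross term $\int \varphi^{m+1}\varphi'(g^2)'$ by parts. For $m=1$ a direct computation collapses the zeroth-order coefficient to
\[
Q(v)=\int g'^2\varphi^{3}+\int(-2\varphi'')\varphi^{2}g^2\ \ge\ \int g'^2\varphi^3+2\int g^2\varphi^3 ,
\]
using $-\varphi''\ge\varphi$. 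The denominator is $\int v^2\varphi=\int g^2\varphi^3$. For general $m$ I expect the same computation to give the coefficient $-(m+1)\varphi^{m+1}\varphi''\ge(m+1)\varphi^{m+2}$, with new weight $\varphi^{m+2}$. Hence
\[
\mu_j(\varphi^m)\ \ge\ (m+1)+\mu_{j-1}(\varphi^{m+2})\ \ge\ (m+1)+(j-1)(j+m+1)=j(j+m).
\]
Here I use that a Dirichlet (or weighted-vanishing) eigenvalue dominates the Neumann one with the same index. The arithmetic closing exactly is the consistency check for the choice of shift; if it fails I would adjust the substitution to $v=\varphi^{a}g$ and tune $a$.

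Rigidity comes from the equality cases. If $\mu_j(\varphi)=j(j+1)$ for some $j\ge1$, every inequality in the chain is an equality. In particular $\int(-\varphi''-\varphi)\varphi^{2}g^2=0$ for a nontrivial $g$ whose zeros are isolated, so $\varphi''+\varphi=0$ on $(0,D)$. Hence $\varphi=c\sin(t-a)$. The Neumann/Dirichlet comparison must also be an equality, which forces $\varphi$ to vanish at both endpoints (weighted-vanishing boundary conditions). Positivity on $(0,D)$ then forces $D=\pi$, $a=0$ and $c>0$.

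The main obstacle is justifying these manipulations for a merely continuous $\varphi$ with $\varphi''+\varphi\le0$ only distributionally. By this condition $\varphi$ is semiconcave, hence locally Lipschitz with $\varphi'$ of bounded variation, and $\varphi''$ is a signed measure, so the integration by parts must be read with $-\varphi''$ as a measure. One must also handle the endpoints, where $\varphi$ may vanish, and identify the correct form domains so that the min--max characterization of $\mu_j$ via $v=u'$ is valid. I would first try approximating $\varphi$ by smooth $\varphi_\varepsilon$ satisfying $\varphi_\varepsilon''+\varphi_\varepsilon\le0$ on slightly shrunken intervals, for instance by mollification and rescaling. I would prove the inequality there and pass to the limit using continuity of the Neumann eigenvalues under uniform convergence of densities. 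For rigidity I would argue directly with the measure $-\varphi''-\varphi\ge0$.
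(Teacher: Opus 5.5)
Your proof of the inequality is correct, and it takes a genuinely different route from the paper's. The paper never differentiates the eigenfunction. It compares Pr\"ufer phases with the Legendre phase transported by the clock $\tau$, where $\sin\tau=\varphi$ and $\tau'\ge1$ comes from the monotonicity of $\varphi^2+(\varphi')^2$. It then uses reflection and the identity $\bar\gamma(\theta;j(j+1))+\bar\gamma(\pi-\theta;j(j+1))=j\pi$ so that no index is lost. All of this is done on the singular problem itself, with the single weight $\varphi$.

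Your commutation identity holds for every $m$ with the shift $a=1$; for $v=\varphi g$,
\[
\int\bigl((v')^2-mh'v^2\bigr)\varphi^m=\int\varphi^{m+2}(g')^2+(m+1)\int\varphi^{m+1}g^2\,d(-\varphi'')+\bigl[\varphi^{m+1}\varphi'g^2\bigr],
\]
so $\mu_j(\varphi^m)\ge(m+1)+\mu_{j-1}(\varphi^{m+2})$ telescopes to $j(j+m)$. The approximation step is also sound and simpler than you fear. Mollification alone gives $(\varphi*\rho_\varepsilon)''+\varphi*\rho_\varepsilon\le0$ on the shrunken interval, with no rescaling. You only need the easy inequality $\limsup_{\varepsilon\to0}\mu_j(\varphi_\varepsilon)\le\mu_j(\varphi)$, which you get by restricting the first $j+1$ eigenfunctions of $\varphi$ and using them as test functions. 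Your route buys a short variational proof (the one-dimensional Bochner formula) and the more general Gegenbauer bound. It costs the spectral theory of the weights $\varphi^m$, whose degenerate endpoints can become limit-point once $m\ge3$.

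The gap is in the rigidity step. Since $\nu:=-\varphi''-\varphi$ is only a nonnegative measure, $\int\varphi^2g^2\,d\nu=0$ with $g=u_j'/\varphi$ says only that $\nu$ is carried by the zero set of $g$, i.e.\ by the interior critical points of $u_j$. Atoms of $\nu$ (concave corners of $\varphi$) sitting exactly there are invisible to this integral. So ``the zeros of $g$ are isolated, hence $\varphi''+\varphi=0$'' does not follow. You would also need to explain why the equality concerns this particular $g$.

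The fix is to push equality to the bottom of your chain. The chain $j(j+1)=\mu_j(\varphi)\ge2+\mu_{j-1}(\varphi^3)\ge\cdots$ forces $\mu_1(\varphi^{2j-1})=2j$. Take the first eigenfunction $u$ for the weight $\varphi^{2j-1}$ and set $g=u'/\varphi$ and $\|u'\|^2=\int(u')^2\varphi^{2j-1}$. The identity then reads $2j\|u'\|^2=\int\varphi^{2j+1}(g')^2+2j\int\varphi^{2j}g^2\,d\nu+2j\|u'\|^2$, so $g$ is a nonzero constant. Having no zeros, it kills $\nu$ on $(0,D)$, atoms included. The natural condition $\varphi^{2j-1}u'=c\,\varphi^{2j}\to0$ then gives $\varphi(0)=\varphi(D)=0$, and your conclusion $\varphi=c\sin$, $D=\pi$ goes through.

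An equality case cannot be reached by approximation. So the recursion inequalities used in this descent, and the identity at the bottom, must be proved for $\varphi$ itself. You need eigenfunctions for the weights $\varphi^m$ with $m\le2j-1$. You also need the boundary term $\varphi^{m+1}\varphi'g^2=\varphi'\varphi^{-m-1}(\varphi^mu')^2$ to vanish at a degenerate endpoint. It does: near $0$, concavity gives $t\varphi'(t)\le\varphi(t)$, and $|\varphi^mu'(t)|=|\mu\int_0^t\varphi^mu|\le|\mu|\,t\,\varphi(t)^m\sup|u|$.
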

\begin{remark}
Writing $\varphi=e^{-f}$, condition \eqref{eq:intro-density} is equivalent to
$f''\ge1+(f')^2$, i.e.\ the curvature-dimension condition $\mathrm{CD}(1,2)$.
The Legendre model arises as a geometric limit of admissible surfaces under
collapse, so the constant $j(j+1)$ in \eqref{eq:main-ineq} is sharp; see
\S\ref{sec:spindle}.
\end{remark}

Theorem~\ref{thm:intro-1d}, together with a diameter bound for the
zero-dimensional case, rules out collapsed spaces as minimizers.

\begin{theorem} \label{thm:intro-non-collapse}
Let $k\ge2$. If $g$ in the mGH completion of $\Cc$ minimizes
$f_k$, then the underlying Alexandrov space has Hausdorff dimension two.
\end{theorem}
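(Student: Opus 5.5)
We argue by contradiction and compare values. By Lin--Wang--Xu \cite[Thm.~1.8]{LinWangXu}, the round sphere lies in $\Cc$ and has $\lambda_k(\Sph^2)=\ell(\ell+1)$, where $\ell=\ell(k)$ is the unique integer with $\ell^2\le k\le \ell^2+2\ell$. Hence the infimum of $f_k$ over the completion is at most $\ell(\ell+1)$. It therefore suffices to show that every collapsed limit $X$ of dimension one or zero satisfies $\lambda_k(X)>\ell(\ell+1)$ for $k\ge2$. For this we use that eigenvalues are continuous under mGH convergence with a uniform lower Ricci (here $\mathrm{CD}(1,2)$) bound, by the standard spectral convergence results for $\mathrm{RCD}$ spaces.

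\emph{Dimension zero.} The limit is a point. This happens only if diameters tend to zero, but Bonnet--Myers bounds the diameter above, not below. We therefore need the diameter bound mentioned before the statement: for a collapsing sequence of surfaces with $K\ge1$ and diameters $d_i\to0$, the Cheng or Li--Yau-type estimate $\lambda_1\ge c/d_i^2$ gives $\lambda_k\ge\lambda_1\to\infty$. Consequently the zero-dimensional limit carries no finite eigenvalue $\lambda_k$ for $k\ge1$, or the infimum is not attained near it; either way the point is not a minimizer. I would phrase this as: the limit spectrum is $\{0,\infty,\dots\}$, so $\lambda_k=\infty>\ell(\ell+1)$.

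\emph{Dimension one.} A one-dimensional collapsed limit of surfaces with $K\ge1$ is a segment $[0,D]$ with measure $\varphi\,dt$, where $\varphi$ satisfies $\mathrm{CD}(1,2)$, i.e.\ \eqref{eq:intro-density}. A circle cannot occur, since $\mathrm{CD}(1,N)$ forces $D\le\pi$ and rules out closed geodesics of this type; equivalently, the Alexandrov limit of spheres with $K\ge1$ of dimension one is a segment. The key structural input is that collapsed limits of $\mathrm{RCD}(1,2)$ spaces in the lower-dimensional regime have densities whose $1/(N-n)$-power is concave with the correct constant. With $N=2$ and $n=1$, this gives exactly $\varphi''+\varphi\le0$. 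The limit eigenvalues are the Neumann eigenvalues $\mu_j(\varphi)$, which are simple, so $\lambda_k(X)=\mu_k(\varphi)$. Theorem~\ref{thm:intro-1d} then gives $\lambda_k(X)\ge k(k+1)$.

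Now compare $k(k+1)$ with $\ell(\ell+1)$. Since $k\ge \ell^2\ge\ell$, we have $k(k+1)\ge\ell(\ell+1)$, with equality only if $k=\ell$, i.e.\ $k=\ell^2=\ell$, i.e.\ $k\in\{0,1\}$. For $k\ge2$ we get $k\ge 2>\ell$ when $\ell=1$, and $k\ge\ell^2>\ell$ when $\ell\ge2$. The inequality is therefore strict, and the collapsed space has $\lambda_k$ strictly above a value attained by a two-dimensional space, so it is not a minimizer. (For $k=1$ the Legendre model ties with the sphere, which explains the restriction $k\ge2$.)

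The main obstacle is the structural step in dimension one: justifying that every one-dimensional point of the completion is a segment of length at most $\pi$ carrying a density with $\varphi''+\varphi\le0$, positive in the interior and continuous up to the endpoints, and that the spectrum of the limit agrees with the Neumann spectrum of $\mathsf L_\varphi$. I would first invoke the characterization of one-dimensional $\mathrm{RCD}(K,N)$ spaces (Kitabeppu--Lakzian / Cavalletti--Milman needle decomposition), together with stability of $\mathrm{CD}(1,2)$ under mGH limits, rather than argue directly from Alexandrov geometry.
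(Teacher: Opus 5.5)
Your proposal is correct and follows the paper's proof essentially step for step. The round sphere gives $\lambda_k(g)\le\ell(\ell+1)<k(k+1)$. A point has $f_k=+\infty$. A one-dimensional limit is a $\mathrm{CD}(1,2)$ weighted interval, to which Theorem~\ref{thm:intro-1d} applies.

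Two minor points. The round spectrum is classical (Lemma~\ref{lem:block}), so citing \cite{LinWangXu} for it is unnecessary. You also only need the inequality $\lambda_k(X)\ge\mu_k(\varphi)$ from Lemma~\ref{lem:cheeger-identification}, not the equality you assert.
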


With collapse ruled out, we define $\overline{\Cc}$ to be the two-dimensional
part of the completion of $\Cc$, consisting of Alexandrov surfaces of curvature
$\ge1$, each equipped with its area measure; see \S\ref{sec:prelim}.

We first analyze the smooth part, namely $\Cc$, by a variational argument and identify
its minimizers.
\begin{theorem}\label{thm:intro-smooth}
Let $k\ge2$ and suppose $\inf_\Cc\la_k$ is attained by a smooth metric
$g_\star\in\Cc$. Then $g_\star$ is the round unit sphere, and
$\inf_\Cc\la_k=\la_k(\Sph^2)$.
\end{theorem}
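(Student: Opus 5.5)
Throughout, $g_\star$ is a smooth minimizer of $\la_k$ on $\Cc$, and the plan is a first-variation argument. First, Gauss--Bonnet with $K\ge1$ gives $\Area(g_\star)\le 4\pi$ and $\chi(M)>0$. So $M$ is $\Sph^2$ or $\R P^2$; the projective plane is handled by lifting to the double cover, whose spectrum contains that of $M$. Write $\Omega=\{x: K_{g_\star}(x)>1\}$ for the set where the constraint is not active.

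\textbf{Step 1 (Euler--Lagrange condition on $\Omega$).} Take a conformal variation $g_t=e^{2tv}g_\star$ with $v\in C_c^\infty(\Omega)$. For small $|t|$ the constraint $K\ge1$ still holds, so $g_t\in\Cc$. Let $E$ be the eigenspace of $\la=\la_k(g_\star)$. If $\la_{k-1}<\la$ (after possibly replacing $k$ by the smallest index with the same eigenvalue, which does not change the infimum), then minimality forces the following: for every such $v$, the quadratic form $Q_v(u)=\int_M v\,(|\nabla u|^2-\la u^2)\,dA$ (the derivative of the Rayleigh quotient in dimension two, where the Dirichlet energy is conformally invariant) is not negative definite on $E$. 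Running the standard Hahn--Banach/El Soufi--Ilias argument with both signs of $v$, I would obtain eigenfunctions $u_1,\dots,u_m\in E$ with $\sum_i(|\nabla u_i|^2-\la u_i^2)=0$ on $\Omega$. Equivalently, since $\sum_i|\nabla u_i|^2=\la\sum_i u_i^2$ there, the map $U=(u_1,\dots,u_m)$ is conformal-energy balanced on $\Omega$.

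\textbf{Step 2 (showing $\Omega=\emptyset$).} This is the main obstacle. On $\Omega$ the identity from Step 1, together with $\Delta U=-\la U$ and the Bochner formula applied to $\sum u_i^2$, should give a pointwise identity involving $K$. I would compute $\Delta(\sum u_i^2)=2\sum|\nabla u_i|^2-2\la\sum u_i^2=0$, so $\sum u_i^2$ is harmonic on $\Omega$. Next I would apply Bochner to $\sum|\nabla u_i|^2=\la\sum u_i^2$. This gives
\[
\tfrac12\Delta\textstyle\sum|\nabla u_i|^2=\sum|\nabla^2u_i|^2-\la\sum|\nabla u_i|^2+K\sum|\nabla u_i|^2,
\]
and the left side vanishes. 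Hence $K=\la-\sum|\nabla^2u_i|^2/\sum|\nabla u_i|^2$ on $\Omega$. I then need to exclude this. The idea is to use unique continuation: if $\Omega\ne\emptyset$, then $\sum u_i^2$ is harmonic on $\Omega$ and constant where $K=1$ would be needed for an interior extremum. I expect to argue instead by a second-order, non-conformal variation that increases curvature only where $K=1$ is inactive. If this direct route stalls, my fallback is to show that $\Omega$ has empty interior using the identity above and real analyticity.

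\textbf{Step 3 (conclusion from $K\equiv1$).} Once $\Omega=\emptyset$, we have $K_{g_\star}\equiv1$, so $g_\star$ is the round $\Sph^2$ or $\R P^2$. For $\R P^2$, the spectrum consists of the even-degree spherical harmonics, $\ell(\ell+1)$ with $\ell$ even. Counting multiplicities then shows $\la_k(\R P^2)>\la_k(\Sph^2)$ for $k\ge2$. For instance $\la_1(\R P^2)=6$ already exceeds $\la_k(\Sph^2)=2$ for $k\le3$, and the same comparison holds in general. This contradicts minimality, since the round sphere lies in $\Cc$. Hence $g_\star=\round$ and $\inf_\Cc\la_k=\la_k(\Sph^2)$.
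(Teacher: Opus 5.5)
\textbf{There is a genuine gap in Steps 1--2, and it begins with the first variation.} In dimension two, $\int_M|\nabla\phi|^2_{g_t}\,dA_{g_t}$ does not depend on $t$ for $g_t=e^{2tv}g_\star$. This is exactly the conformal invariance you cite. So only the mass $\int_M\phi^2e^{2tv}\,dA_{g_\star}$ moves. The derivatives at $t=0$ of the eigenvalue branches leaving $\la$ are therefore the eigenvalues of $-2\la\,M[v]$, where $M[v]_{ij}=\int_M v\,u_iu_j\,dA_{g_\star}$ for an orthonormal basis $(u_i)$ of $E$.

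Your form $Q_v(u)=\int_M v(|\nabla u|^2-\la u^2)\,dA$ is not this quantity. Hence the identity $\sum_i(|\nabla u_i|^2-\la u_i^2)=0$ on $\Omega$ is not what minimality gives. The harmonicity of $\sum_iu_i^2$ and the Bochner computation in Step 2 rest on that wrong identity.

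Even granting it, Step 2 produces no contradiction. The relation $K=\la-\sum_i|\nabla^2u_i|^2/\sum_i|\nabla u_i|^2$ is compatible with $K>1$. The ``second-order non-conformal variation'' and ``real analyticity'' fallbacks are not arguments. So $\Omega=\varnothing$, which is the whole content of the theorem, is never established.

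\textbf{The missing observation is that the correct first variation is sign-definite, which makes Step 2 unnecessary.} Take $v\ge0$ in $C_c^\infty(\Omega)$ with $v>0$ on a ball $B$; as you note, $g_t$ is admissible for both signs of small $t$.
\begin{itemize}
\item The matrix $-2\la M[v]$ is negative semidefinite.
\item The right derivative of $\la_k(g_t)$ at $t=0$ is the $r$-th smallest eigenvalue of $-2\la M[v]$, where $r$ is the position of $k$ in its eigenvalue block.
\item Minimality makes this derivative $\ge0$, hence $=0$, so $M[v]$ is singular.
\item For $0\ne c\in\ker M[v]$, the eigenfunction $\psi=\sum_ic_iu_i$ satisfies $\int_M v\psi^2\,dA=0$, so $\psi$ vanishes on $B$.
\item Aronszajn's unique continuation then gives $\psi\equiv0$, a contradiction.
\end{itemize}
This is the paper's argument. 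It works at any position $r$, so you never need to pass to the bottom of the cluster. Your justification for that reduction was also wrong: minimizing $\la_k$ does not imply minimizing $\la_{k'}$ for the smaller index $k'$.

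\textbf{Your Step 3 claim about $\R P^2$ is false.} The round projective plane has spectrum $0,6^{(5)},20^{(9)},\dots$, so $\la_4=\la_5=6=\la_4(\Sph^2)=\la_5(\Sph^2)$. The nonorientable case therefore cannot be dismissed by counting. If $\R P^2$ were admitted, it would be a second smooth minimizer for $k=4,5$, given the Lin--Wang--Xu bound lifted to the double cover. The statement tacitly requires $M\cong\Sph^2$, as the paper assumes.
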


We also prove the following rigidity result.
\begin{proposition}\label{prop:rigidity}
Let $g\in\overline\Cc$ satisfy $\la_{\ell^2}(g)=\ell(\ell+1)$ for every $\ell\ge1$.
Then $g$ is the round unit sphere.
\end{proposition}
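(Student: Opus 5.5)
The plan is to extract the total area of $g$ from the high-frequency behaviour of the spectrum via Weyl's law. Once the area is known to equal $4\pi$, the maximal-volume rigidity for Alexandrov spaces with curvature $\ge1$ forces roundness. Only the asymptotic content of the hypothesis is used, namely that $\la_{\ell^2}(g)=\ell(\ell+1)$ along the sequence $\ell\to\infty$.

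First, I place $g$ in a setting where Weyl's law is available. An element of $\overline\Cc$ is a two-dimensional Alexandrov surface $X$ of curvature $\ge1$ with its area measure $\mathcal H^2$. Such a space is a non-collapsed $\mathrm{RCD}(1,2)$ space, by Petrunin's theorem that Alexandrov spaces satisfy $\mathrm{CD}$, together with infinitesimal Hilbertianity. For non-collapsed $\mathrm{RCD}(K,N)$ spaces, the Weyl asymptotics of Ambrosio--Honda--Tewodrose hold with the Euclidean constant. In dimension two this reads
\[
\lim_{k\to\infty}\frac{\la_k(g)}{k}=\frac{4\pi}{\mathcal H^2(X)} .
\]
Along the subsequence $k=\ell^2$ the hypothesis gives $\la_{\ell^2}/\ell^2=\ell(\ell+1)/\ell^2\to1$. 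Hence $\mathcal H^2(X)=4\pi$. Monotonicity of $\la_k$ shows that the full ratio $\la_k/k$ also tends to $1$, but the subsequence is already enough. Alternatively, I could avoid the general RCD Weyl law by approximating $X$ with smooth metrics in $\Cc$ or with polyhedral metrics. This would use continuity of eigenvalues under mGH convergence and a uniform heat-kernel bound, but I would cite the RCD result directly.

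Second, I apply Bishop--Gromov for Alexandrov spaces of curvature $\ge1$: $\mathcal H^2(X)\le\Area(\Sph^2)=4\pi$. In the equality case, the volume rigidity for Alexandrov spaces (Grove--Petersen, or directly Burago--Gromov--Perelman) says that $X$ is isometric to the unit round sphere. To see this, take any $p\in X$. Equality in the relative volume comparison forces every ball about $p$ to have the comparison volume, so $\diam X=\pi$ and the space of directions at $p$ has full length $2\pi$. Then $X$ is the spherical suspension over a circle of length $2\pi$, which is $\Sph^2$. Since $X$ carries its area measure and not a weighted measure, the metric-measure structure is also the round one. This rules out spindles and cone metrics, since a cone point of angle less than $2\pi$ strictly reduces the area.

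The main obstacle is the first step: justifying Weyl's law for possibly singular Alexandrov surfaces with the sharp constant $4\pi/\mathcal H^2$. In particular, one must make sure that the cone points, which form a countable set of zero measure, do not affect the leading term. If citing Ambrosio--Honda--Tewodrose is unacceptable, I would fall back on Dirichlet--Neumann bracketing. I would decompose $X$ into small geodesic triangles that are almost Euclidean away from finitely many cone points of angle deficit bounded below. The remaining cone points contribute total curvature at most $4\pi$, and their neighbourhoods can be taken of arbitrarily small area, so they affect only $o(k)$ eigenvalues.
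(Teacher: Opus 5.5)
Your proposal is correct and follows essentially the same route as the paper: the Weyl law of Ambrosio--Honda--Tewodrose forces $\Area(g)=4\pi$, and Bishop--Gromov monotonicity together with maximal-diameter rigidity then gives a spherical suspension over a circle, which must be the round sphere. The differences are cosmetic. You use Weyl's law in the equivalent form $\la_k/k\to 4\pi/\mathcal H^2(X)$ along $k=\ell^2$, whereas the paper squeezes $N_g(\ell(\ell+1))$. You also fix the circle's length $2\pi$ from the full cone angle at a pole, whereas the paper reads it off from $\Area=2L=4\pi$.
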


Both of these last two results also follow from \cite{LinWangXu}; the proofs given here are variational
and independent of theirs.

Together with the existence of minimizers in the completion
(Corollary~\ref{cor:existence}), the only unresolved possibility is
that a minimizer is a genuinely singular Alexandrov surface. These results suggest
that, if such a minimizer exists, all curvature in excess of $1$ is singular. We
therefore conjecture the following.
\begin{conjecture}\label{conjecture}
Let $k\ge2$ and let $g\in\overline{\Cc}$ minimize $f_k$. Let
$\omega=\omega_{\mathrm{ac}}+\omega_{\mathrm{sing}}$ be the Lebesgue decomposition
of its curvature measure with respect to $dA_g$. Then $\omega_{\mathrm{ac}}=dA_g$;
equivalently, the Gaussian curvature density satisfies $K=1$ almost everywhere.
\end{conjecture}
\begin{remark}
    Note that the spindles $\Sph^2_\alpha$ all attain $\lambda_1=2$, so a minimizer of
$f_1$ need not be round; the conjecture concerns the curvature density, not
uniqueness. More generally, Theorem~\ref{thm:spindle-equality} shows that at each
cluster bottom $\ell^2$, the equality value $\ell(\ell+1)$ is attained by an
entire family of spindles, so the unit sphere is not the unique minimizer.
\end{remark}

\textbf{Idea of the Proof}. A minimizing sequence is precompact, but its limit may collapse. We first show that any collapsed limit is an interval carrying a density satisfying \eqref{eq:intro-density}. We then use Sturm--Liouville theory to rule out collapse, thereby ensuring that the limiting minimizer is genuinely two-dimensional.
The uniformization theorem
implies that for every $(M,g)\in\Cc$, there exists $u\in C^\infty(\Sph^2)$ such that
$g=e^{2u}\round$, turning the curvature constraint $K\ge1$ into
\[
1+\Delta_0u-e^{2u}\ge0.
\]
Assuming a smooth minimizer is not round, its curvature is strictly greater
than $1$ on an open set. One may therefore
perturb the conformal factor in both directions on this set. First-order
stationarity forces the eigenfunctions corresponding to the minimizing
eigenvalue to satisfy a linear relation there. Aronszajn's unique continuation
theorem \cite{Aronszajn} then propagates this relation to all of $\Sph^2$,
contradicting their linear independence.

\textbf{Organization.} Section~\ref{sec:prelim} fixes the class, the conformal
reformulation, and the Alexandrov completion $\overline\Cc$.
Section~\ref{sec:reduction} establishes the reduction to the indices $\ell^2$.
Section~\ref{sec:spindle} identifies the limit of a collapsing family as a weighted interval, poses the resulting singular Sturm--Liouville problem, and analyses the spherical spindles, which realize that limit and show the thresholds below to be sharp. In Section~\ref{sec:noncollapse}, we rule out geometric collapse and prove the existence of a minimizer within the completion. Section~\ref{sec:smooth} delivers the proof of smooth rigidity by a variational argument, and Section~\ref{sec:completion-rigidity} proves the simultaneous rigidity stated in Proposition~\ref{prop:rigidity}, by a Weyl-law argument independent of it. Finally, Section~\ref{sec:1d-comparison} proves Theorem~\ref{thm:intro-1d}, the sharp comparison with rigidity for the full Neumann spectrum of one-dimensional $\mathrm{CD}(1,2)$ densities. In Appendix~\ref{Appendix}, we prove Theorem~\ref{thm:slp_discrete}, and identify the self-adjoint realization generated by the Cheeger energy. 

\textbf{Acknowledgments.} The author thanks Dr.~Anandateertha Mangasuli for introducing him to these questions. While preparing the manuscript, the author became aware of the related work \cite{LinWangXu}.

\section{Preliminaries}\label{sec:prelim}
Throughout, $(M,g)$ denotes a closed connected smooth two-dimensional Riemannian
manifold, $K_g$ its Gaussian curvature, $dA_g$ its area measure, $d_g(\cdot,\cdot)$ the
distance function, and $\la_k(M,g)$ the eigenvalues of the nonnegative Laplace--Beltrami
operator $\Delta_g=-\operatorname{div}_g\nabla_g$, listed in nondecreasing order with
multiplicity,
\[
0=\la_0(M,g)<\la_1(M,g)\le\la_2(M,g)\le\cdots\nearrow\infty,
\]
so that $\la_1$ is the first nonzero eigenvalue. By the min--max characterization,
\begin{equation}\label{eq:minmax}
\la_k(M,g)=\min_{\substack{V\subset H^1(M)\\ \dim V=k+1}}\ \max_{0\ne\phi\in V}\
\frac{\displaystyle\int_M|\nabla_g\phi|^2\,dA_g}{\displaystyle\int_M\phi^2\,dA_g},
\end{equation}
the dimension $k+1$ accounting for the constant mode $\la_0=0$.

Two consequences of the curvature bound are used repeatedly. By Bonnet--Myers,
every $(M,g)\in\Cc$ has $\diam(M,g)\le\pi$. By
Gauss--Bonnet, $\int_M K_g\,dA_g=2\pi\chi(M)$, and $K_g\ge1>0$ gives $\chi(M)>0$, hence
$\chi(M)=2$ and $M\cong\Sph^2$; thus
\begin{equation}\label{eq:area-bound}
\Area(M,g)=\int_M dA_g\ \le\ \int_M K_g\,dA_g=4\pi,
\end{equation}
with equality if and only if $K_g\equiv1$.

\begin{remark}\label{rem:nonclosed}
The class $\Cc$ is not closed under mGH convergence, so a
minimizing sequence for $f_k$ need not converge within $\Cc$, and minimizers need not
exist there. Indeed, the round spheres $(S^2(r),g_r)$, $0<r\le1$, have $K_{g_r}=r^{-2}\ge1$ and so
lie in $\Cc$, while $\diam(S^2(r))=\pi r\to0$; normalizing their vanishing areas to unit
mass, they converge as $r\searrow0$ to the one-point space with its Dirac measure, which
is not a closed two-dimensional manifold.
\end{remark}

\subsection{The conformal description of the class}\label{sec:conformal}

The uniformization theorem replaces the curvature condition defining $\Cc$ by a single
inequality for a scalar function on the round sphere.

\begin{theorem}[\cite{Taylor}]\label{thm:uniformization}
Let $(M,g)$ be a closed two-dimensional Riemannian manifold diffeomorphic to $\Sph^2$.
Then there exist a diffeomorphism $M\cong\Sph^2$ and a function $u\in C^\infty(\Sph^2)$
with
\[
g=e^{2u}\round,
\]
and $u$ is unique up to the action of the conformal group of $(\Sph^2,\round)$.
\end{theorem}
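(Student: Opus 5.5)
The statement is the uniformization theorem for topological spheres, cited from \cite{Taylor}. I would treat it as a classical input rather than reprove it, but the proof can be organized as follows.

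\textbf{Existence of the conformal factor.} We have already seen that any $(M,g)$ in our setting has $\chi(M)=2$ by Gauss--Bonnet, so only the genus-zero case is needed. Existence of a conformal diffeomorphism to $(\Sph^2,\round)$ reduces, via isothermal coordinates, to showing that a closed simply connected Riemann surface is biholomorphic to $\widehat{\mathbb C}$. The route I would take is the standard one.

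\begin{enumerate}[label=(\roman*)]
\item Fix a point $p\in M$.
\item Solve $\Delta_g G=\delta_p-\Area(M,g)^{-1}$ by Hodge theory or Lax--Milgram on $H^1$. This gives a Green's function with a logarithmic singularity at $p$.
\item Since $H^1(M;\R)=0$, the harmonic function $G$ has a harmonic conjugate on $M\setminus\{p\}$, after correcting the constant source term. The main technical point is to take the conjugate of a genuinely harmonic dipole instead: differentiate $G$ in the $p$-variable to obtain $h$ with singularity $\operatorname{Re}(1/z)$.
\item The holomorphic function $F=h+i h^*$ is then a meromorphic function with a single simple pole.
\item A meromorphic function with exactly one simple pole on a compact Riemann surface is a degree-one map to $\widehat{\mathbb C}$. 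It is therefore a biholomorphism.
\end{enumerate}

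Pulling back by the resulting diffeomorphism gives $g=e^{2u}\round$ for a smooth positive factor, since both metrics are conformal in the same holomorphic charts.

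\textbf{Uniqueness up to conformal maps.} Suppose $g=e^{2u}\round=\Phi^*(e^{2v}\round)$ for two uniformizations, with $\Phi$ a diffeomorphism of $\Sph^2$. Then $\Phi$ is conformal for $\round$, hence a M\"obius transformation. Comparing the two expressions gives
\[
u=v\circ\Phi+\tfrac12\log|\det d\Phi|_{\round}.
\]
This is exactly the action of the conformal group on conformal factors.

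\textbf{Main obstacle.} The hardest step is the analytic existence of the dipole harmonic function and its single-valued conjugate. I would handle it with elliptic regularity on $H^1(M)$ plus the vanishing of the first cohomology of the sphere. For the paper's purposes, however, I would simply cite \cite{Taylor}.
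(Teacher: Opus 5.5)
Your proposal is correct and agrees with the paper, which gives no proof of its own and simply cites \cite{Taylor}. The genus-zero argument you sketch is the standard proof: a harmonic function on $M\setminus\{p\}$ with singularity $\operatorname{Re}(1/z)$, its single-valued conjugate, and the resulting degree-one meromorphic function. Your uniqueness formula $u=v\circ\Phi+\tfrac12\log|\det d\Phi|$ is also right.

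The one thing to fix is the first half of step (iii). $G$ is not harmonic off $p$, since $\Delta_g G=-\Area(M,g)^{-1}$ there. Moreover, no harmonic function with a single logarithmic singularity exists on a closed surface, because by Stokes its flux around $p$ would have to vanish. So go straight to the dipole. The cleanest way is to solve $\Delta_g f=\Delta_g(\chi\operatorname{Re}(1/z))$, whose right-hand side is smooth with zero mean, and take $h=\chi\operatorname{Re}(1/z)-f$. This avoids Lax--Milgram for $\delta_p$, which does not apply directly since $\delta_p\notin H^{-1}$ in dimension two.
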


Let $\Delta_0$ denote the nonnegative Laplacian of $(\Sph^2,\round)$ and $\dd$ its area
measure. Under $g=e^{2u}\round$ the curvature satisfies
\begin{equation}\label{eq:liouville}
K_g\,e^{2u}=K_{\round}+\Delta_0 u=1+\Delta_0 u,
\end{equation}
and the area measure is $dA_g=e^{2u}\dd$. Hence
\begin{equation}\label{eq:K-conformal}
K_g=e^{-2u}\bigl(1+\Delta_0 u\bigr),
\end{equation}
and the condition $K_g\ge1$ is equivalent to
\begin{equation}\label{eq:constraint}
1+\Delta_0 u-e^{2u}\ \ge\ 0\qquad\text{on }\Sph^2.
\end{equation}

Combining uniformization with \eqref{eq:constraint} describes $\Cc$ in terms of
functions on the round sphere.

\begin{proposition}\label{prop:conformal-class}
The map $u\mapsto e^{2u}\round$ is a bijection, modulo the conformal group of
$(\Sph^2,\round)$, between
\[
\mathcal{U}=\bigl\{\,u\in C^\infty(\Sph^2)\ :\ 1+\Delta_0 u-e^{2u}\ge0\ \text{on }\Sph^2\,\bigr\}
\]
and the class $\Cc$. Under this identification the area functional is
$\Area(e^{2u}\round)=\int_{\Sph^2}e^{2u}\dd$, and the eigenvalue functional
\eqref{eq:functional} is the $k$-th eigenvalue of the weighted problem
\begin{equation}\label{eq:weighted-eig}
\Delta_0\phi=\la\,e^{2u}\phi\qquad\text{on }\Sph^2 ,
\end{equation}
where $\Delta_0$ is the Laplacian on $(\Sph^2,\round)$.
\end{proposition}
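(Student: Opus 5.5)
The plan has three steps: show the map is well defined, show it is injective and surjective modulo conformal maps, and then transport the area and the Rayleigh quotient.

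\emph{Well defined and surjective.} For $u\in\mathcal U$, the metric $g=e^{2u}\round$ is smooth. By \eqref{eq:K-conformal} its curvature is $K_g=e^{-2u}(1+\Delta_0u)$, and \eqref{eq:constraint} says exactly that this is at least $1$. Hence $(\Sph^2,g)\in\Cc$. Conversely, let $(M,g)\in\Cc$. By Gauss--Bonnet, as noted before \eqref{eq:area-bound}, $M\cong\Sph^2$. Theorem~\ref{thm:uniformization} then gives a diffeomorphism $\Phi\colon M\to\Sph^2$ and a smooth $u$ with $(\Phi^{-1})^*g=e^{2u}\round$. Since $\Phi$ is an isometry onto $(\Sph^2,e^{2u}\round)$, the curvature bound $K\ge1$ transfers, and reading \eqref{eq:K-conformal} backwards gives $u\in\mathcal U$.

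\emph{Injectivity modulo conformal maps.} Suppose $e^{2u}\round$ and $e^{2v}\round$ are isometric via some $\psi$. Then $\psi$ is a conformal diffeomorphism of $(\Sph^2,\round)$ with
\[
v=u\circ\psi+\tfrac12\log|J_\psi|,
\]
where $J_\psi$ is the conformal factor of $\psi$, i.e.\ $\psi^*\round=|J_\psi|\round$. This relation defines the action of the conformal group on $\mathcal U$, and the uniqueness clause of Theorem~\ref{thm:uniformization} gives precisely this. I would also check that the action preserves $\mathcal U$. That is immediate: the pulled-back metric is isometric to the original, so it has the same curvature.

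\emph{Area and eigenvalues.} In two dimensions $dA_g=e^{2u}\dd$ and $|\nabla_g\phi|_g^2=e^{-2u}|\nabla_0\phi|^2$. Hence the Dirichlet energy is conformally invariant,
\[
\int|\nabla_g\phi|^2_g\,dA_g=\int|\nabla_0\phi|^2\dd,
\]
while $\int\phi^2\,dA_g=\int\phi^2e^{2u}\dd$. Since $u$ is bounded, $H^1$ is the same space for both metrics. Substituting into \eqref{eq:minmax} identifies $\la_k(e^{2u}\round)$ with the $k$-th min--max value of the Rayleigh quotient
\[
\frac{\int|\nabla_0\phi|^2\dd}{\int\phi^2e^{2u}\dd}.
\]
These min--max values are the eigenvalues of the weighted problem \eqref{eq:weighted-eig}. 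Equivalently, $\Delta_g=e^{-2u}\Delta_0$ pointwise. The area formula follows by integrating $dA_g$.

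The only step needing any care is the precise statement of the conformal-group action and its compatibility with $\mathcal U$; everything else is direct substitution.
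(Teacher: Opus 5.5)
Your proposal is correct and follows the same route as the paper. You use uniformization together with \eqref{eq:K-conformal} to identify $\Cc$ with $\mathcal U$ modulo conformal maps, $dA_g=e^{2u}\dd$ for the area, and conformal invariance of the Dirichlet energy in \eqref{eq:minmax} (equivalently $\Delta_g=e^{-2u}\Delta_0$) for the eigenvalues. You add some details the paper leaves implicit: the Gauss--Bonnet step $M\cong\Sph^2$, the transformation law $v=u\circ\psi+\tfrac12\log|J_\psi|$, invariance of $\mathcal U$ under it, and the equality of the $H^1$ spaces.
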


\begin{proof}
Theorem~\ref{thm:uniformization} provides $u\in C^\infty(\Sph^2)$, unique up to the
conformal group, with $g=e^{2u}\round$, and \eqref{eq:K-conformal} turns $K_g\ge1$ into
\eqref{eq:constraint}; the construction is invertible. The area identity is immediate
from $dA_g=e^{2u}\dd$. In two dimensions $\Delta_g=e^{-2u}\Delta_0$, so
$\Delta_g\phi=\la\phi$ reads \eqref{eq:weighted-eig}; the Dirichlet energy
$\int|\nabla_g\phi|^2\,dA_g=\int|\nabla_0\phi|^2\dd$ is a conformal invariant while the
mass $\int\phi^2\,dA_g=\int\phi^2e^{2u}\dd$ carries the weight, so \eqref{eq:minmax}
realizes $\la_k$ as the $k$-th eigenvalue of \eqref{eq:weighted-eig}.
\end{proof}

\subsection{Alexandrov surfaces of curvature \texorpdfstring{$\ge1$}{}}\label{sec:alexandrov}

Let $(X,d)$ be a geodesic metric space. For a geodesic triangle
$\triangle abc\subset X$ of perimeter $<2\pi$, its \emph{comparison triangle}
$\triangle\bar a\bar b\bar c\subset\Sph^2$ is the triangle in the round unit sphere
with the same three side lengths, unique up to isometry; a point $x\in[b,c]$ is
\emph{matched} to the point $\bar x\in[\bar b,\bar c]$ at the same arclength from the
corresponding endpoint.

\begin{definition}[\cite{BGP}; see also \cite{AKP}]\label{def:alexandrov}
A geodesic metric space $(X,d)$ has \emph{curvature $\ge1$ in the sense of Alexandrov}
if every point has a neighborhood $U$ such that, for every geodesic triangle
$\triangle abc\subset U$ of perimeter $<2\pi$, every vertex $a$, and every point $x$ on
the opposite side $[b,c]$,
\begin{equation}\label{eq:triangle-comparison}
d(a,x)\ \ge\ d_{\Sph^2}(\bar a,\bar x),
\end{equation}
$\triangle\bar a\bar b\bar c$ being the comparison triangle and $\bar x$ the point
matched to $x$. A two-dimensional such space is an \emph{Alexandrov surface of curvature
$\ge1$}.
\end{definition}

A smooth surface satisfies Definition~\ref{def:alexandrov} exactly when $K\ge1$, so the
definition extends the condition defining $\Cc$ to the non-smooth setting.

Such a surface is one of \emph{bounded integral curvature} \cite{AZ,Reshetnyak}, with
\emph{curvature measure} $\omega$, finite on compact sets, of density $K\,dA$ on the
smooth part and with an atom $2\pi-\theta\ge0$ at each conical point of angle
$\theta\in(0,2\pi]$. Curvature $\ge1$ reads
\begin{equation}\label{eq:omega-bound}
\omega\ \ge\ dA\qquad\text{as measures},
\end{equation}
i.e.\ $K\ge1$ almost everywhere with nonnegative singular part, and Gauss--Bonnet implies
$\omega(X)=2\pi\chi(X)$. Hence $\omega(X)=2\pi\chi(X)\ge\Area(X)>0$ forces $\chi(X)=2$,
so $X\cong\Sph^2$; and by Reshetnyak's uniformization \cite{Reshetnyak} it is
conformally $e^{2u}\round$ with $\omega=(1+\Delta_0u)\,\dd$ distributionally, the
singular counterpart of Theorem~\ref{thm:uniformization}.

By the area and diameter bounds a sequence in $\Cc$ is precompact in the
Gromov--Hausdorff topology \cite{BGP}, and, after normalizing the measures, in the
mGH topology \cite{GMS}; its limits are compact Alexandrov
spaces of curvature $\ge1$, but of Hausdorff dimension only at most $2$ \cite{BGP}; the dimension drops
precisely when the area collapses, as when a spindle is pinched to a segment. Collapse
is ruled out for minimizing sequences in \S\ref{sec:noncollapse}; we name the
two-dimensional limits here.

\begin{definition}\label{def:Cbar}
Let $\overline\Cc$ be the class of two-dimensional Alexandrov surfaces of curvature
$\ge1$, each written $e^{2u}\round$ (the conformal representation being
Reshetnyak's \cite{Reshetnyak}, as recalled above), with curvature measure
$\omega=(1+\Delta_0 u)\,\dd\ \ge\ e^{2u}\dd$ of total mass $\omega(\Sph^2)=4\pi$. A
\emph{spherical cone metric} is a member of $\overline\Cc$ whose absolutely continuous
curvature density is $1$ almost everywhere, so that $\omega=e^{2u}\dd$ together with
finitely many atoms; the spindles \eqref{eq:spindle} are the cone metrics with two
antipodal atoms.
\end{definition}

\begin{proposition}\label{prop:rcd}
Every $g\in\overline\Cc$, equipped with its area measure
$\mathfrak m=e^{2u}\dd$, is a compact $\mathrm{RCD}(1,2)$ metric measure
space. Consequently, it is also an $\mathrm{RCD}(1,\infty)$ space.
Moreover, $\diam(g)\le\pi$ and $\Area(g)\le4\pi$.
\end{proposition}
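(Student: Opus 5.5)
The plan is to cite the standard structure theory rather than argue from scratch. Every $g\in\overline\Cc$ is a compact two-dimensional Alexandrov space of curvature $\ge1$ in the sense of Definition~\ref{def:alexandrov}. Petrunin's theorem says that an $n$-dimensional Alexandrov space of curvature $\ge K$, equipped with its $n$-dimensional Hausdorff measure, satisfies $\mathrm{CD}((n-1)K,n)$; Zhang--Zhu give the same conclusion by a different route. With $n=2$ and $K=1$ this gives $\mathrm{CD}(1,2)$ for $(X,d,\mathcal H^2)$.

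The first step is to identify $\mathcal H^2$ with $\mathfrak m=e^{2u}\dd$. On the regular part, which has full measure, the Reshetnyak conformal chart identifies the two measures. The conical points form a countable set, so they are null for both.

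The second step is infinitesimal Hilbertianity. The Cheeger energy of an Alexandrov space is the Dirichlet form coming from the almost-everywhere Riemannian structure of Otsu--Shioya; equivalently, Alexandrov spaces are infinitesimally Hilbertian by Kuwae--Machigashira--Shioya. Hence $\mathrm{CD}(1,2)$ upgrades to $\mathrm{RCD}(1,2)$. For the implication $\mathrm{RCD}(1,2)\Rightarrow\mathrm{RCD}(1,\infty)$ I would use the monotonicity $\mathrm{CD}(K,N)\Rightarrow\mathrm{CD}(K,N')$ for $N'\ge N$, including $N'=\infty$. Infinitesimal Hilbertianity is a property of the metric measure space and does not depend on the curvature-dimension parameters, so it carries over.

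For the bounds, the diameter estimate $\diam\le\pi$ follows from the generalized Bonnet--Myers theorem for $\mathrm{CD}(1,2)$, namely $\diam\le\pi\sqrt{(N-1)/K}=\pi$. Alternatively it follows directly from Alexandrov comparison. For the area, Definition~\ref{def:Cbar} gives $\omega\ge e^{2u}\dd$ with $\omega(\Sph^2)=4\pi$, so
\[
\Area(g)=\int_{\Sph^2}e^{2u}\dd\le\omega(\Sph^2)=4\pi .
\]
The Bishop--Gromov inequality for $\mathrm{CD}(1,2)$ gives the same bound.

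I expect the main obstacle to be the measure identification and compatibility: checking that the conformal area measure of Reshetnyak's representation agrees with $\mathcal H^2$ for the intrinsic metric. I would handle this by noting that, on the complement of the singular set, the metric is locally bi-Lipschitz to a smooth conformal metric. Since $u$ is locally bounded there (curvature $\ge1$ excludes cusps), the measures coincide. Everything else is citation.
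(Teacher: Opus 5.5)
Your main line is the paper's own. You get $\mathrm{CD}(1,2)$ for $(X,d,\mathcal H^2)$ from Petrunin or Zhang--Zhu, a quadratic Cheeger energy via Kuwae--Machigashira--Shioya, $\mathrm{RCD}(1,\infty)$ by monotonicity in $N$, $\diam\le\pi$ from Bonnet--Myers, and $\Area\le\omega(\Sph^2)=4\pi$ from $\omega\ge e^{2u}\dd$. Two compressions are worth noting. First, Kuwae--Machigashira--Shioya construct a Dirichlet form, and that this form \emph{is} the Cheeger energy is a separate identification. The paper takes it from Gigli--Kuwada--Ohta; Cheeger's $|\nabla f|_w=\operatorname{lip}f$ on PI spaces together with Euclidean tangent cones almost everywhere would also do. Second, your alternative area bound via Bishop--Gromov needs the small-scale normalization $\mathcal H^2(B_r(p))\le\pi r^2(1+o(1))$. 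Without it the argument fails, because $\mathrm{CD}(1,2)$ is blind to multiplying the measure by a constant.

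The step that would fail is the one you add yourself: $\mathcal H^2=e^{2u}\dd$. The paper never proves this; it builds it silently into the phrase ``its area measure''. Your justification breaks down on three counts.
\begin{enumerate}
\item The conical points are countable, but nothing prevents them from being dense (think of convex surfaces with a dense set of vertices). In that case the regular set has empty interior, and there is no open set on which to compare metrics.
\item Away from the atoms, $u$ need not be locally bounded. Up to a smooth term it is the logarithmic potential of the nonnegative measure $\omega$, and such a potential can equal $+\infty$ at a point carrying no atom; a curvature density comparable to $|w|^{-2}\log^{-2}|w|$ near $w=0$ does this. ``No cusps'' only rules out atoms of mass $2\pi$.
\item Even on an open set where $u$ is bounded, bi-Lipschitz equivalence with a smooth conformal metric gives only two-sided bounds between $\mathcal H^2$ and $e^{2u}\dd$ with constants, not equality.
\end{enumerate}
The identification is nevertheless true and standard, so cite it rather than reprove it. In Reshetnyak's isothermal representation the area of a Borel set $E$ is $\int_E e^{2u}\,dx$, and this area coincides with the two-dimensional Hausdorff measure of the intrinsic metric.
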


\begin{proof}
A two-dimensional Alexandrov surface of curvature $\ge1$, with reference measure
$\mathcal H^2$, satisfies $\mathrm{CD}(1,2)$ \cite{Petrunin},
\cite[Appendix]{ZhangZhu}, and its Cheeger energy is a quadratic form. The canonical
Dirichlet form is constructed in \cite{KMS}, and on a compact Alexandrov space its
gradient flow coincides with that of the relative entropy \cite{GigliKuwadaOhta}, which
identifies it with the Cheeger energy. Hence $\mathrm{RCD}(1,2)$, and so
$\mathrm{RCD}(1,\infty)$. The
diameter bound is Bonnet--Myers for $\mathrm{CD}(1,2)$ \cite{Sturm}. For the area
bound the smooth identity \eqref{eq:area-bound} is unavailable on singular members;
instead \eqref{eq:omega-bound} and Gauss--Bonnet for surfaces of bounded integral
curvature give
\[
\Area(g)\ \le\ \omega(\Sph^2)\ =\ 2\pi\chi(\Sph^2)\ =\ 4\pi. \qedhere
\]
\end{proof}

The converse also holds. Every $\mathrm{RCD}(1,2)$ space $(X,d,\mathcal H^2)$ is a
two-dimensional Alexandrov surface of curvature $\ge1$ \cite{LytchakStadler}, so
$\overline\Cc$ is exactly the class of compact $\mathrm{RCD}(1,2)$ surfaces. Several of
the results quoted below are stated for the reduced condition
$\mathrm{RCD}^*(K,N)$; for finite $N$ the reduced and full conditions coincide
\cite{CavallettiMilman}, and we write $\mathrm{RCD}(1,2)$ throughout.

On $\mathrm{RCD}(1,\infty)$, the Laplacian is the nonnegative self-adjoint generator $\Delta_g$ of the
Cheeger energy $\int|\nabla f|^2\,d\mathfrak m$ on $L^2(\mathfrak m)$, well defined. On a smooth member $g=e^{2u}\round\in\Cc$ the
Cheeger energy is the classical Dirichlet energy, so $\Delta_g$ coincides with
$-\operatorname{div}_g\nabla_g$, and its eigenvalue problem is the weighted problem
\begin{equation}\label{eq:weighted-cbar}
\Delta_0\phi=\la\,e^{2u}\phi\qquad\text{on }\Sph^2.
\end{equation}
Thus $\Delta_g$ extends to $\overline\Cc$ the operator used on $\Cc$.

By Proposition~\ref{prop:rcd}, every $g\in\overline\Cc$ is $\mathrm{RCD}(1,\infty)$. Then the Cheeger-energy Laplacian corresponding to $g$ has discrete
spectrum; see \cite[Section~7]{GMS}, whose compactness hypothesis is supplied by the
positivity of $K=1$. On the
smooth members this recovers the classical discreteness of $-\operatorname{div}_g\nabla_g$
on $(\Sph^2,g)$. We record this, together with the accompanying spectral convergence; see \cite{AmbrosioHonda17}.

\begin{proposition}[\cite{GMS,AmbrosioHonda17,AmbrosioHonda}]\label{prop:discrete}
Every $g\in\overline\Cc$ has purely discrete Laplace spectrum
\begin{equation}\label{eq:spectrum-cbar}
0=\la_0(g)<\la_1(g)\le\la_2(g)\le\cdots\nearrow\infty,
\end{equation}
listed with finite multiplicities, with the min-max description \eqref{eq:minmax}, the
numerator read as the Cheeger energy.
If $g_n\to g$ in the mGH sense within
$\overline\Cc$, then $\la_k(g_n)\to\la_k(g)$ for every $k$, with multiplicity.
\end{proposition}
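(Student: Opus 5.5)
The plan is to assemble Proposition~\ref{prop:discrete} from the three ingredients already recorded above. These are the RCD structure of Proposition~\ref{prop:rcd}, the compactness theory of \cite{GMS}, and the spectral stability of \cite{AmbrosioHonda17,AmbrosioHonda}.

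\textbf{Discreteness.} Fix $g\in\overline\Cc$ with $\mathfrak m=e^{2u}\dd$. By Proposition~\ref{prop:rcd}, $(\Sph^2,d_g,\mathfrak m)$ is a compact $\mathrm{RCD}(1,\infty)$ space with finite measure. The Cheeger energy is quadratic, so $\Delta_g$ is a nonnegative self-adjoint operator on $L^2(\mathfrak m)$.

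Discreteness of its spectrum is equivalent to compactness of the embedding of the form domain $W^{1,2}$ into $L^2(\mathfrak m)$. On an $\mathrm{RCD}(K,\infty)$ space with $K>0$ this embedding is compact; this is the content of \cite[\S7]{GMS}. One can also see it more concretely. The space is compact and doubling (it is $\mathrm{RCD}(1,2)$, so Bishop--Gromov holds), and it supports a local $(1,2)$-Poincar\'e inequality. Rellich compactness then follows in the standard way.

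Hence the spectrum consists of eigenvalues of finite multiplicity accumulating only at $\infty$. The kernel consists of the functions with zero Cheeger energy. Since the space is connected, these are exactly the constants, so $\la_0=0$ is simple and $\la_1>0$. (Alternatively, the spectral gap $\la_1\ge K=1$ follows from the Bakry--\'Emery/Poincar\'e inequality for $\mathrm{RCD}(1,\infty)$.)

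The min--max formula \eqref{eq:minmax}, with the Cheeger energy as numerator, is then the Courant--Fischer principle for a nonnegative self-adjoint operator with compact resolvent and form domain $W^{1,2}$.

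\textbf{Spectral convergence.} Let $g_n\to g$ in mGH within $\overline\Cc$. The measures must first be normalized so that mGH convergence makes sense. Rescaling $\mathfrak m$ by a constant does not change the Rayleigh quotients. Moreover, Bishop--Gromov together with the noncollapsed limit shows that the areas converge, $\Area(g_n)\to\Area(g)>0$, so the normalization is harmless.

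All the spaces are $\mathrm{RCD}(1,\infty)$, in fact $\mathrm{RCD}(1,2)$, with uniformly bounded diameter $\le\pi$. We then invoke the Mosco convergence of the Cheeger energies along mGH-convergent sequences of $\mathrm{RCD}(K,\infty)$ spaces \cite{GMS}. With the uniform lower Ricci bound, this Mosco convergence upgrades to compact convergence of the resolvents \cite{AmbrosioHonda17,AmbrosioHonda}. Compact convergence of resolvents yields $\la_k(g_n)\to\la_k(g)$ for every $k$, counted with multiplicity, together with $L^2$-convergence of the eigenprojections.

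\textbf{Main obstacle.} I expect the delicate step to be verifying the hypotheses of the cited convergence theorem, rather than the discreteness. Two points need checking. First, the mGH convergence must be taken with the normalized measures $\mathfrak m_n/\Area(g_n)$, and this normalization must be transported back to the unnormalized eigenvalues. Second, one must confirm that the uniform $\mathrm{RCD}(1,\infty)$ bound is enough to get the asymptotic compactness of \cite{GMS}, uniformly in $n$. Here I would use the uniform log-Sobolev/spectral-gap inequality for $\mathrm{RCD}(1,\infty)$ spaces: it gives equi-tightness and hence asymptotic compactness of sequences with bounded energy.
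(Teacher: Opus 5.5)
Your proposal is correct and follows essentially the same route as the paper, which likewise gets $\mathrm{RCD}(1,\infty)$ from Proposition~\ref{prop:rcd}, discreteness from the $K>0$ compactness in \cite[\S7]{GMS}, and spectral convergence with multiplicity from Mosco convergence together with \cite{AmbrosioHonda17,AmbrosioHonda}. One remark is loose: $\Area(g_n)\to\Area(g)$ does not follow from Bishop--Gromov alone (it is volume continuity for noncollapsed limits, or simply part of mGH convergence of the area measures), but you do not need it, since constant rescalings of the measure leave every Rayleigh quotient unchanged.
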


Therefore, the eigenvalue
functionals extend to the completion. For each $k\ge1$,
\begin{equation}\label{eq:functional-cbar}
f_k\colon\overline\Cc\longrightarrow(0,\infty),\qquad f_k(g)=\la_k(g),
\end{equation}
is well defined.

We next identify $\Cc$ with the two-dimensional part of mGH completion.

\begin{lemma}\label{lem:dense}
Every $g=e^{2u}\round\in\overline\Cc$ is a mGH limit of metrics
in $\Cc$.
\end{lemma}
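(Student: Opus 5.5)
Given $g=e^{2u}\round\in\overline\Cc$, the plan is to construct smooth metrics in $\Cc$ converging to $g$ in the mGH sense. I would use two standard facts about Alexandrov spaces rather than PDE smoothing of $u$. First, by Alexandrov's theorem together with its converse (Pogorelov), every two-dimensional Alexandrov surface of curvature $\ge1$ is isometric to the boundary of a convex body in $\Sph^3$, with the intrinsic metric. Second, convex bodies in $\Sph^3$ can be approximated from inside by convex bodies with smooth, strictly convex boundary.

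\textbf{Step 1: realization.} By Definition~\ref{def:Cbar} the surface $(X,d)$ is homeomorphic to $\Sph^2$ and has curvature $\ge1$. Alexandrov's realization theorem then gives a convex body $C$ in an open hemisphere of $\Sph^3$ with $X\cong\partial C$ isometrically, the boundary carrying its induced length metric. The diameter bound $\diam\le\pi$ from Proposition~\ref{prop:rcd} keeps us within the hypotheses. The degenerate case, where $C$ is a doubly covered spherical convex polygon, is allowed and causes no difficulty.

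\textbf{Step 2: smoothing.} I would approximate $C$ in Hausdorff distance by convex bodies $C_n\subset C$ in the hemisphere with $C^\infty$ boundaries of positive definite second fundamental form. One way is to pass to the Euclidean cone via gnomonic projection, which preserves convexity. There one takes a smooth strictly convex approximation, for instance $\{\rho_\varepsilon*h\le 1\}$ of the gauge function plus $\varepsilon|x|^2$, and then projects back. By the Gauss equation in $\Sph^3$, $K=1+\det \mathrm{II}>1$ on $\partial C_n$, so $(\partial C_n,g_n)\in\Cc$.

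\textbf{Step 3: convergence.} I would invoke the classical fact, due to Alexandrov and Volkov, that if convex bodies converge in Hausdorff distance then their boundaries with intrinsic metrics converge in the Gromov--Hausdorff sense. Since all limits are two-dimensional (the area converges to $\Area(X)>0$ by continuity of surface area under Hausdorff convergence of convex bodies), the Hausdorff measures converge as well. That yields mGH convergence of the normalized area measures; in the noncollapsed Alexandrov setting this is also the volume convergence theorem. Finally, write each $g_n$ conformally as $e^{2u_n}\round$ via Theorem~\ref{thm:uniformization}.

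The main obstacle is Step 1 in the singular case, namely ensuring the cited realization theorem covers every member of $\overline\Cc$. If I wanted to avoid it, I would try a direct conformal approach instead. Mollify the curvature measure, set $\omega_\varepsilon=\rho_\varepsilon*\omega$, solve $1+\Delta_0u_\varepsilon=\omega_\varepsilon$ up to normalization, and then add a small concavifying correction: scaling $g$ by $(1-\delta)^2$ gives the inequality $K\ge(1-\delta)^{-2}$ with margin to absorb the mollification error in $e^{2u}$. The difficulty there is controlling $e^{2u_\varepsilon}$ against $\rho_\varepsilon*e^{2u}$ near atoms, where $u$ has logarithmic singularities; this is why I prefer the convex-geometric route.
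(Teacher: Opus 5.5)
Your convex-geometric route is genuinely different from the paper's and can be made to work, but Step~1 makes a false claim that Step~2 relies on. The realizing body $C$ lies in an open hemisphere of $\Sph^3$ only when $\diam X<\pi$. To see this, suppose $C$ sits in an open hemisphere and $p,q\in\partial C$. Slice $C$ by a great sphere through $p$, $q$ and an interior point. The section is a convex disc in an open hemisphere of that great sphere, so its perimeter is $<2\pi$. Its boundary joins $p$ to $q$ by two arcs lying on $\partial C$, which forces $d_{\partial C}(p,q)<\pi$.

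By the maximal diameter theorem, the members of $\overline\Cc$ with $\diam=\pi$ are exactly the spindles $\Sph^2_\alpha$, including the round sphere. They are realized by spherical joins of an antipodal pair with a convex disc in the equatorial great sphere (a closed hemisphere of $\Sph^3$ when $\alpha=1$). For these the gnomonic chart is unavailable. So the bound $\diam\le\pi$ does not keep you inside an open hemisphere, and the failure occurs precisely on the singular surfaces the paper cares most about.

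The repair is cheap. The space $(X,(1-\delta)d)$ is still an Alexandrov sphere of curvature $\ge(1-\delta)^{-2}\ge1$, now of diameter $<\pi$. Its realization therefore contains no antipodal pair, since such a pair would lie on $\partial C$ and force intrinsic diameter $\ge\pi$. Hence it lies in an open hemisphere. The identity map is a $\pi\delta$-GH approximation with the same normalized measure, so a diagonal argument finishes. Separately, in the degenerate doubly covered case the gauge function of Step~2 needs an interior point, so first pass to a thin parallel body in the gnomonic chart.

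The paper never leaves the conformal representation that defines $\overline\Cc$. It smooths the conformal factor itself, setting $u_t=P_tu$ with $P_t$ the heat semigroup of $\round$. Because $P_t$ commutes with $\Delta_0$ and is Markov, $1+\Delta_0u_t=P_t\omega\ge P_t(e^{2u})\ge e^{2u_t}$, the last step being Jensen's inequality. So $e^{2u_t}\round\in\Cc$ with $K\ge1$ exactly. Convergence then comes from Reshetnyak's theorem on metrics with weakly converging curvature measures (every atom of $\omega$ has mass $<2\pi$), together with $L^1$ convergence of $e^{2u_t}$.

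That Jensen step is exactly the control of $e^{2u_\eps}$ against the smoothed $e^{2u}$ that you flagged as the obstacle in your fallback. If you smooth $u$ itself by any Markov kernel commuting with $\Delta_0$ (the heat kernel, or any zonal mollifier), then $1+\Delta_0u_\eps$ is the corresponding smoothing of $\omega$. The logarithmic singularities at the atoms then cost nothing, and no $(1-\delta)$ rescaling is needed.

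Your route gives approximants with $K>1$ strictly and a transparent extrinsic picture. The price is much heavier classical input: Alexandrov's realization theorem in $\Sph^3$ with its degenerate cases, his convergence theorem for intrinsic metrics of convex surfaces, and volume convergence. It also needs the extra care at $\diam=\pi$. The paper's argument takes a few lines once Reshetnyak's framework, already used to define $\overline\Cc$, is in place.
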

\begin{proof}
Let $P_t=e^{-t\Delta_0}$ be the heat semigroup of $(\Sph^2,\round)$ and $u_t=P_t u$,
smooth for $t>0$. Since $P_t$ commutes with $\Delta_0$, preserves positivity, and fixes
constants,
\[
1+\Delta_0 u_t\;=\;P_t\bigl(1+\Delta_0 u\bigr)\;=\;P_t\,\omega\;\ge\;
P_t\bigl(e^{2u}\bigr)\;\ge\;e^{2P_t u}\;=\;e^{2u_t},
\]
the last inequality by Jensen's inequality, the heat kernel $p_t(x,\cdot)\dd$ being a
probability measure. Hence $e^{2u_t}\round\in\Cc$ for every $t>0$; the approximating
metrics are smooth with $K\ge1$ exactly, with no need to relax and rescale the
curvature bound. As $t\to0^+$ the potentials converge in $L^1$ with
$\Delta_0 u_t\to\Delta_0 u$ weakly; since every atom of $\omega$ has mass
$2\pi-\theta<2\pi$, the cone angles $\theta$ being positive, the distance functions
converge uniformly \cite[Theorem~7.3.1]{Reshetnyak}; see also \cite{ChenLi}.
Moreover $e^{2u_t}\le P_t(e^{2u})$ together with $P_t(e^{2u})\to e^{2u}$ in $L^1$ and
$u_t\to u$ almost everywhere gives, by generalized dominated convergence,
$e^{2u_t}\to e^{2u}$ in $L^1(\dd)$, i.e.\ convergence of the area measures weakly and
in total mass. Thus $e^{2u_t}\round\to g$ in the mGH sense.
Conversely, any two-dimensional limit of $\Cc$ is $\mathrm{RCD}(1,2)$. The condition
$\mathrm{CD}(1,2)$ is stable under mGH convergence \cite{Sturm} and infinitesimal
Hilbertianity is stable because $\mathrm{RCD}(1,\infty)$ is \cite{GMS}. Hence it lies in
$\overline\Cc$ by Proposition~\ref{prop:rcd}.
\end{proof}

\section{Reduction to the indices \texorpdfstring{$\ell^2$}{}}\label{sec:reduction}

The comparison \eqref{eq:comparison} at all indices is equivalent to its restriction to
the indices $\ell^2$.

\begin{lemma}\label{lem:block}
On $(\Sph^2,\round)$, indexed with multiplicity,
\begin{equation}\label{eq:block}
\la_k(\Sph^2)=\ell(\ell+1)\qquad\text{if and only if}\qquad \ell^2\le k\le(\ell+1)^2-1,
\end{equation}
for each integer $\ell\ge0$. In particular the value $\ell(\ell+1)$ first occurs at the
index $k=\ell^2$, and occurs with multiplicity $2\ell+1$.
\end{lemma}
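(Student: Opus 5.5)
The plan is to use the classical description of the spectrum of $(\Sph^2,\round)$ and then do pure counting. First I will recall that the eigenspaces of $\Delta_0$ are the spaces $\mathcal H_\ell$ of spherical harmonics of degree $\ell\ge0$, namely restrictions to $\Sph^2$ of harmonic homogeneous polynomials of degree $\ell$ in $\R^3$. Each has eigenvalue $\ell(\ell+1)$ and dimension $2\ell+1$, and $L^2(\Sph^2)=\bigoplus_{\ell\ge0}\mathcal H_\ell$ is an orthogonal decomposition.

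I will take these facts as standard: the eigenvalue follows from the polar form of the Euclidean Laplacian applied to $r^\ell Y$, and completeness follows from Stone--Weierstrass together with the decomposition of homogeneous polynomials. The dimension count is $\dim\mathcal P_\ell-\dim\mathcal P_{\ell-2}=\binom{\ell+2}{2}-\binom{\ell}{2}=2\ell+1$, using that $\Delta_{\R^3}\colon\mathcal P_\ell\to\mathcal P_{\ell-2}$ is surjective. Since the values $\ell(\ell+1)$ are strictly increasing in $\ell$, the distinct eigenvalues are exactly these, and the multiplicity of $\ell(\ell+1)$ is exactly $2\ell+1$.

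The counting step goes as follows. Listing the eigenvalues in nondecreasing order with multiplicity, starting at index $0$, the value $\ell(\ell+1)$ is preceded by all eigenvalues $m(m+1)$ with $m<\ell$. There are
\[
\sum_{m=0}^{\ell-1}(2m+1)=\ell^2
\]
of these. So the copies of $\ell(\ell+1)$ occupy precisely the indices $\ell^2,\ell^2+1,\dots,\ell^2+2\ell=(\ell+1)^2-1$. This proves both directions of \eqref{eq:block}, together with the statements about the first occurrence and the multiplicity.

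There is no real obstacle here. The only point needing care is the dimension count and completeness of spherical harmonics, and I will cite these as classical rather than reprove them.
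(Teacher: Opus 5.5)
Your proposal is correct and follows the paper's proof: both count the $\sum_{m=0}^{\ell-1}(2m+1)=\ell^2$ eigenvalues below $\ell(\ell+1)$ and conclude that its $2\ell+1$ copies occupy the indices $\ell^2,\dots,(\ell+1)^2-1$. The only difference is that you sketch why the spherical harmonics give the full spectrum with those multiplicities, whereas the paper simply takes this classical fact as known from the introduction.
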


\begin{proof}
The eigenvalues below $\ell(\ell+1)$ number $\sum_{j=0}^{\ell-1}(2j+1)=\ell^2$, so the
value $\ell(\ell+1)$ begins at index $\ell^2$; having multiplicity $2\ell+1$, it
occupies the indices $\ell^2,\dots,\ell^2+2\ell=(\ell+1)^2-1$.
\end{proof}

We call the indices $k=\ell^2$, at which the value $\ell(\ell+1)$ first appears, the
\emph{cluster bottoms}.

\begin{proposition}\label{prop:reduction}
For $(M,g)\in\Cc$, the following are equivalent.
\begin{enumerate}
\item[(i)] $\la_k(M,g)\ge\la_k(\Sph^2)$ for every $k\ge1$.
\item[(ii)] $\la_{\ell^2}(M,g)\ge\ell(\ell+1)$ for every $\ell\ge1$.
\end{enumerate}
Consequently the round sphere minimizes $f_k$ on $\Cc$ for every $k$ if and only if it
minimizes $f_{\ell^2}$ on $\Cc$ for every $\ell$.
\end{proposition}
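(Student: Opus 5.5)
The plan is to use only the monotonicity of the eigenvalue sequence together with the block structure of the round spectrum recorded in Lemma~\ref{lem:block}. No geometric input about $\Cc$ is needed beyond the fact that $\la_k(M,g)$ is nondecreasing in $k$. So the argument will be purely combinatorial and would hold for any nondecreasing sequence.

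For (i)$\Rightarrow$(ii), I specialize (i) to $k=\ell^2$. By Lemma~\ref{lem:block}, $\la_{\ell^2}(\Sph^2)=\ell(\ell+1)$, which gives (ii) immediately.

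For (ii)$\Rightarrow$(i), fix $k\ge1$ and let $\ell\ge1$ be the unique integer with $\ell^2\le k\le(\ell+1)^2-1$, namely $\ell=\lfloor\sqrt k\rfloor$. Lemma~\ref{lem:block} gives $\la_k(\Sph^2)=\ell(\ell+1)$. Since the eigenvalues of $(M,g)$ are listed in nondecreasing order with multiplicity and $k\ge\ell^2$,
\[
\la_k(M,g)\ \ge\ \la_{\ell^2}(M,g)\ \ge\ \ell(\ell+1)\ =\ \la_k(\Sph^2),
\]
where the second inequality is (ii). The index $k=0$ is trivial, since both sides vanish.

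For the consequence, the round sphere minimizes $f_k$ on $\Cc$ for every $k$ exactly when (i) holds for every $(M,g)\in\Cc$. Likewise, it minimizes $f_{\ell^2}$ for every $\ell$ exactly when (ii) holds for every $(M,g)\in\Cc$, using $\la_{\ell^2}(\Sph^2)=\ell(\ell+1)$ and $(\Sph^2,\round)\in\Cc$. The claim then follows by applying the pointwise equivalence to each member of $\Cc$.

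There is no real obstacle here. The only point requiring care is confirming that every $k\ge1$ lies in exactly one cluster $[\ell^2,(\ell+1)^2-1]$ with $\ell\ge1$, so that the cluster bottom $\ell^2$ is at most $k$; this is what makes the monotonicity step go in the right direction.
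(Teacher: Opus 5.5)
Your proposal is correct and follows the paper's proof essentially verbatim: (i)$\Rightarrow$(ii) by specializing to $k=\ell^2$, and (ii)$\Rightarrow$(i) by choosing $\ell$ with $\ell^2\le k\le(\ell+1)^2-1$ and chaining monotonicity of the spectrum with Lemma~\ref{lem:block}. Your explicit treatment of the ``consequently'' clause, using that $(\Sph^2,\round)\in\Cc$, fills in a step the paper leaves implicit.
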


\begin{proof}
That (i) implies (ii) is immediate. Conversely, given $k\ge1$ let $\ell$ be the unique
integer with $\ell^2\le k\le(\ell+1)^2-1$, so that $\la_k(\Sph^2)=\ell(\ell+1)$ by
Lemma~\ref{lem:block}. Since $k\ge\ell^2$ and the eigenvalues are nondecreasing in the
index,
\[
\la_k(M,g)\ \ge\ \la_{\ell^2}(M,g)\ \ge\ \ell(\ell+1)\ =\ \la_k(\Sph^2),
\]
the second inequality being (ii) at $\ell$.
\end{proof}

The non-collapse and existence results of Section~\ref{sec:noncollapse}, and the smooth
rigidity theorem, do not use this reduction; they are established at every index
$k\ge2$ directly.

\section{Geometric collapse}\label{sec:spindle}

The Legendre model arises among these as a sharp limit; the comparison theorem for
one-dimensional models is proved in Section~\ref{sec:1d-comparison}.

\begin{theorem}\label{thm:collapse-structure}
Let $(X_n)\subset\overline\Cc$ satisfy $\mathcal H^2(X_n)\to0$ and
$\diam(X_n)\to D>0$, and equip each with its normalized measure
$\mathfrak m_n=\mathcal H^2\llcorner X_n/\mathcal H^2(X_n)$. Then, after passing to a
subsequence, $(X_n,d_n,\mathfrak m_n)$ converges in the mGH
topology to
\[
X=\bigl([0,D],\,|\cdot|,\,c\,\varphi\,ds\bigr),
\]
where $c>0$ normalizes the mass and $\varphi$ is continuous on $[0,D]$, positive on
$(0,D)$, and satisfies $\varphi''+\varphi\le0$ distributionally, that is
\eqref{eq:intro-density}.
\end{theorem}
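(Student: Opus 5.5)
We argue by compactness and stability of the curvature-dimension condition, and then classify one-dimensional $\mathrm{RCD}(1,2)$ spaces. By Proposition~\ref{prop:rcd}, each $(X_n,d_n,\mathfrak m_n)$ is a compact $\mathrm{RCD}(1,2)$ space. Normalizing the measure by a constant does not affect this, since $\mathrm{CD}(K,N)$ is invariant under scaling $\mathfrak m\mapsto c\,\mathfrak m$. The diameters are bounded by $\pi$, so Gromov's precompactness together with \cite{GMS} gives a subsequence converging in mGH to a compact $\mathrm{RCD}(1,2)$ space $(X,d,\mathfrak m)$ with $\mathfrak m(X)=1$ and $\diam X=D>0$. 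Here we use the stability of $\mathrm{CD}(1,2)$ \cite{Sturm} and of infinitesimal Hilbertianity \cite{GMS}, exactly as at the end of the proof of Lemma~\ref{lem:dense}. Since $\mathcal H^2(X_n)\to0$, the Gromov--Hausdorff limit is a collapsed Alexandrov space of curvature $\ge1$ \cite{BGP}, of dimension at most one. Dimension zero is excluded because $D>0$. Hence $X$ is a one-dimensional compact Alexandrov space, i.e.\ a segment or a circle. A circle has curvature $\ge1$ only if its length is at most $\pi$, but a compact $\mathrm{RCD}(1,2)$ space cannot be a circle: the measure would have to be $\mathrm{CD}(1,2)$ along closed geodesic loops, which is impossible since the density would satisfy $\varphi''+\varphi\le0$ on a period. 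It is cleaner to cite the classification of one-dimensional $\mathrm{RCD}(K,N)$ spaces with $K>0$ (Kitabeppu--Lakzian), which says $X$ is isometric to $[0,D]$.

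Next we identify the measure. On $([0,D],|\cdot|,\mathfrak m)$ the $\mathrm{CD}(1,2)$ condition forces $\mathfrak m$ to be absolutely continuous, $\mathfrak m=c\,\varphi\,ds$, with $\varphi>0$ on $(0,D)$. The density $\varphi$ is $(1,2)$-concave, meaning $\varphi^{1/(N-1)}=\varphi$ is $\sin$-concave. Concretely, for every subinterval $[a,b]\subset(0,D)$ of length $<\pi$, $\varphi$ lies above the solution of $y''+y=0$ with the same boundary values. This is the standard one-dimensional characterization of $\mathrm{CD}(K,N)$ densities via the distortion coefficients $\sigma^{(t)}_{K/(N-1)}$ (Sturm, Cavalletti--Milman needle decomposition). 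We then show that this comparison with $\sin$-arcs is equivalent to $\varphi''+\varphi\le0$ in $\mathcal D'(0,D)$. Local comparison with supersolutions gives superharmonicity for $\frac{d^2}{ds^2}+1$ in the viscosity sense, which for continuous functions is equivalent to the distributional sense. From $\sin$-concavity we also get that $\varphi$ is locally Lipschitz in the interior. Continuity up to the endpoints follows because $\varphi-\text{(arc)}$ is concave-like, so one-sided limits exist. Finiteness of these limits comes from $\varphi$ being bounded by the comparison, using total mass $1$.

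The main obstacle is the identification of the limit space as an interval with a density of full support on $[0,D]$ and no atoms, together with the precise translation of $\mathrm{CD}(1,2)$ into \eqref{eq:intro-density}. One could alternatively derive the density directly: represent $X_n$ near collapse by distance to a point and use the Bishop--Gromov/Brunn--Minkowski inequality for the pushforward of $\mathfrak m_n$ under $d_n(p_n,\cdot)$. That pushforward is a $\mathrm{CD}(1,2)$ density on $[0,\diam]$ by the needle decomposition, and the property passes to weak limits because $\sin$-concavity is closed under uniform convergence. I would try this route first, since it avoids quoting the one-dimensional classification. I would expect positivity of $\varphi$ on $(0,D)$ to follow from $\sin$-concavity together with the support of the limit measure being all of $X$, which holds because mGH limits of measures with full support and uniform doubling have full support.
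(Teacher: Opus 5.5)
Your main line is correct and is essentially the paper's proof. It runs through precompactness and stability of $\mathrm{RCD}(1,2)$, volume continuity to force dimension at most one, $D>0$ to exclude a point, and the Kitabeppu--Lakzian classification. The one-dimensional $\mathrm{CD}(1,2)$ condition is then converted into $\varphi''+\varphi\le0$. The paper makes that conversion through $f=-\log\varphi$ and $f''\ge1+(f')^2$; your $\sin$-concavity formulation is equivalent.

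The alternative you say you would try first, however, rests on a false claim. The pushforward of $\mathfrak m_n$ under $d_n(p_n,\cdot)$ is in general \emph{not} a $\mathrm{CD}(1,2)$ density. The needle decomposition only makes each needle's density $\sin$-concave. The needles of $d_p$ end on the cut locus at different distances from $p$, and a sum of $\sin$-concave densities truncated at different lengths need not be $\sin$-concave.

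This already fails on the spindles of the paper. Take $\Sph^2_\alpha$ with $\alpha<\tfrac12$ and $p$ a regular point at height $t_0<\pi/2$. For $r$ slightly below $t_0$, the level set $\partial B_r(p)$ consists of two loops. One is a loop around the near tip, of length $2\tan(\pi\alpha)\,(t_0-r)+o(t_0-r)$. It disappears for $r\ge t_0$. The other loop varies in a $C^1$ way through $r=t_0$. Hence the derivative of $h(r)=\mathcal H^1(\partial B_r(p))$ jumps \emph{up} by $2\tan(\pi\alpha)$ at $r=t_0$. This is impossible for a function with $h''+h\le0$, whose derivative can only jump down.

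So $\sin$-concavity fails at the finite stages, and there is no property to pass to the limit. That route would need a defect estimate that vanishes asymptotically, which you have not supplied. Stay with the classification route.

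A minor point: in Alexandrov's sense, circles of length up to $2\pi$, not $\pi$, have curvature $\ge1$. This is harmless, since you exclude the circle correctly by integrating $\varphi''+\varphi\le0$ over a period.
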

\begin{proof}
Each $X_n$ is a compact $\mathrm{RCD}(1,2)$ space by Proposition~\ref{prop:rcd},
a condition preserved by rescaling the reference measure. A subsequence converges
\cite{BGP,GMS}, and the limit $X$ is one-dimensional. It is a compact Alexandrov space
of curvature $\ge1$ and dimension at most $2$ \cite{BGP}, dimension $2$ would force
$\mathcal H^2(X)=\lim\mathcal H^2(X_n)=0$, and $D>0$ excludes a point. It is
$\mathrm{RCD}(1,2)$ with a probability reference measure, since $\mathrm{CD}(1,2)$ is
stable under mGH convergence \cite{Sturm} and infinitesimal
Hilbertianity is stable because $\mathrm{RCD}(1,\infty)$ is \cite{GMS}.

One-dimensional $\mathrm{RCD}(1,2)$ spaces are circles or intervals carrying
$c\,e^{-f}\mathcal H^1$ with $f$ $(1,2)$-convex \cite{KitabeppuLakzian}, and the
positive lower bound excludes the circle \cite[\S4]{KitabeppuLakzian}. So
$X=([0,D],c\,\varphi\,ds)$ with $\varphi=e^{-f}$ and $f''\ge1+(f')^2$. Convexity of $f$
makes $\varphi$ locally Lipschitz on $(0,D)$ and continuous up to the endpoints, and
$d\varphi'=-\varphi\,df'+(f')^2\varphi\,dt$ converts $f''\ge1+(f')^2$ into
$\varphi''+\varphi\le0$.
\end{proof}

\textbf{The Sturm--Liouville Problem.}  Theorem~\ref{thm:collapse-structure} produces a weighted interval, and with it the
singular Sturm--Liouville problem
\begin{equation}\label{eq:slp}
-(\varphi u')'=\mu\,\varphi u \quad\text{on }(0,D),
\qquad
\lim_{t\to0^+}\varphi u'=\lim_{t\to D^-}\varphi u'=0,
\end{equation}
where $\varphi\colon[0,D]\to[0,\infty)$ is continuous, positive on $(0,D)$, and satisfies $\varphi''+\varphi\le0$ distributionally.

In Appendix~\ref{Appendix}, we prove that this problem has a discrete, simple spectrum.
\begin{theorem} \label{thm:slp_discrete}
    Suppose $\varphi\colon[0,D]\to[0,\infty)$ is continuous, positive on $(0,D)$, and satisfies 
    \begin{equation}\label{eq:concavity}
\varphi\ \text{ is concave on }[0,D].
\end{equation}
     Then the eigenspectrum of the Sturm Liouville Problem~\eqref{eq:slp} is purely discrete and simple with \[
0=\mu_0(\varphi)<\mu_1(\varphi)<\mu_2(\varphi)<\cdots.
\]
\end{theorem}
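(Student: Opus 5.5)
The plan is to realize \eqref{eq:slp} as the self-adjoint operator attached to a closed quadratic form, and to prove discreteness by showing that the form domain embeds compactly into the weighted $L^2$ space. Simplicity will then follow from a Wronskian argument at the endpoints. Concretely, set $H=L^2((0,D),\varphi\,dt)$ and
\[
\mathcal E(u)=\int_0^D (u')^2\varphi\,dt
\]
on the maximal domain $\{u\in H:\ u \text{ locally absolutely continuous},\ \mathcal E(u)<\infty\}$. Closedness is standard, since local $H^1$ convergence on compact subintervals passes to limits. I will identify the generator with \eqref{eq:slp}. The natural Neumann conditions $\varphi u'\to0$ come from integrating by parts against test functions that do not vanish at the endpoints. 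This identification with the Cheeger energy is deferred to the appendix discussion.

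\textbf{Step 1: behaviour of $\varphi$ near the endpoints.} Concavity, continuity, and positivity on $(0,D)$ give a two-sided linear bound near each endpoint. If $\varphi(0)>0$, then $\varphi$ is bounded above and below near $0$, the endpoint is regular, and the problem there is classical. If $\varphi(0)=0$, concavity gives $\varphi(t)\ge c\,t$ for small $t$, where $c>0$ comes from the chord to the midpoint. It also gives $\varphi(t)\le Ct$, because the one-sided derivative $\varphi'(0^+)$ may be $+\infty$ only in a way still controlled. I will handle the upper bound by using only $\varphi\le\max\varphi$, which turns out to suffice. The same applies at $D$.

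\textbf{Step 2: compactness.} Let $u$ be in the unit ball of the form domain. On $[\delta,D-\delta]$ the weight is bounded below, so Rellich gives compactness there. It remains to show that the tails are uniformly small. Near $0$ in the degenerate case, write $u(t)=u(s)-\int_t^s u'$ and apply Cauchy--Schwarz with weight $\varphi$:
\[
|u(t)-u(s)|^2\le \mathcal E(u)\int_t^s\frac{dr}{\varphi(r)}\le C\log(s/t).
\]
From this, $\int_0^\delta u^2\varphi\,dt\lesssim \int_0^\delta(1+\log(1/t))\,t\,dt\to0$ as $\delta\to0$, uniformly in the ball. Here the factor $t$ is replaced by $\max\varphi$ times $1$ if needed, and $\log$ is integrable in any case. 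Hence the embedding of the form domain into $H$ is compact, the resolvent is compact, and the spectrum is discrete with finite multiplicities. We get $\mu_0=0$ with constant eigenfunction, and $\mathcal E(u)=0$ forces $u$ constant, so $0$ is simple.

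\textbf{Step 3: simplicity.} Suppose $u_1,u_2$ are eigenfunctions with the same $\mu$. Their Wronskian $W=\varphi(u_1u_2'-u_2u_1')$ is constant on $(0,D)$. I will show that $W\to0$ at an endpoint using the Neumann condition $\varphi u_i'\to0$. This also requires $u_i$ to be bounded near the endpoint. I expect this to be the main obstacle, since Step 2 only gives logarithmic growth. To handle it, integrate the equation: $\varphi u'(t)=-\mu\int_0^t\varphi u$, so
\[
|\varphi u'(t)|\le \mu\Bigl(\int_0^t\varphi\Bigr)^{1/2}\|u\|_H .
\]
Since $\int_0^t\varphi\lesssim t^2$ in the degenerate case, this gives $|u'|\lesssim 1$, and so $u$ is bounded. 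Then $W=0$, the functions $u_1,u_2$ are linearly dependent, and each eigenvalue is simple. Ordering the eigenvalues gives the strict chain $0=\mu_0<\mu_1<\mu_2<\cdots$.
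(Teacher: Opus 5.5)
Your proof is correct, and it takes a genuinely different route from the paper's.

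\textbf{The paper's route.} It classifies a degenerate endpoint as limit-circle non-oscillatory: at $\mu=0$, both $1$ and $\int_t^{t_0}\varphi^{-1}$ lie in $L^2(\varphi\,dt)$. It then identifies the form operator with the separated realization through $\mathsf L_\varphi\subseteq\mathsf L_N\subseteq\mathsf L_N^*\subseteq\mathsf L_\varphi^*=\mathsf L_\varphi$. Finally it quotes Zettl's theorem for regular/LCNO endpoints, which gives discreteness, simplicity and the zero count in one stroke.

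\textbf{Your route.} You prove compactness of the form-domain embedding directly: Rellich on $[\delta,D-\delta]$, plus a uniform tail bound from $\int_t^s\varphi^{-1}\le c^{-1}\log(s/t)$. That is the same estimate the paper uses to put the second solution in $L^2$. You then get simplicity from a Wronskian argument.

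\textbf{What each buys.} Your argument is elementary and self-contained, with no limit-point/limit-circle theory. The paper's route also yields that the $j$-th eigenfunction has exactly $j$ zeros. That is not part of this statement, but Lemma~\ref{lem:phase-eigen} uses it, and you would have to supply it separately (e.g.\ by a Pr\"ufer-angle argument).

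\textbf{Identification with \eqref{eq:slp}.} You only sketch one inclusion when identifying the generator with \eqref{eq:slp}. The reverse inclusion also needs to be stated: a $D_{\max}$ solution with $\varphi u'\to0$ lies in the form domain and satisfies the weak equation. It follows from your own endpoint estimates.

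\textbf{A slip in Step 3.} The bound $\int_0^t\varphi\lesssim t^2$ fails under mere concavity (e.g.\ $\varphi(t)=\sqrt t$ near $0$), as your Step 1 caveat anticipates. It is harmless. The bounds $\int_0^t\varphi\le t\max\varphi$ and $\varphi\ge ct$ give $|u'(t)|\lesssim t^{-1/2}$, which is integrable, so $u$ is still bounded. In fact $\varphi u'=O(\sqrt t)$, set against the $O(\sqrt{\log(1/t)})$ growth from Step 2, already gives $W\to0$. So the point you flagged as the main obstacle is not really one.

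\textbf{A gap to fill in Step 2.} Say explicitly why $|u(s)|$ is uniformly bounded on the unit ball for $s$ in a fixed interior interval. This follows from the one-dimensional Sobolev embedding where $\varphi$ is bounded below.
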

\begin{remark}
The condition $\varphi''+\varphi\le0$ in the sense of distributions implies that
$\varphi$ is concave on $(0,D)$.
\end{remark}

Through an explicit example, we show that the spindles $\Sph^2_{\alpha}$ can also attain the eigenvalue values $\ell(\ell+1)$ at the cluster bottoms, and then explain why the Legendre model is dictated by the geometry rather than chosen arbitrarily.

\textbf{The spherical spindles}. For $0<\alpha\le1$, let
\begin{equation} \label{eq:spindle}
    g_\alpha=dt^2+\alpha^2\sin^2 t\,d\vartheta^2,\qquad
t\in[0,\pi],\ \vartheta\in[0,2\pi),
\end{equation}
a surface of revolution with warping factor $\alpha\sin t$, whose Gaussian
curvature is $1$ away from the poles $t\in\{0,\pi\}$, where the cone angle
is $2\pi\alpha$. With $\sqrt{\det g_\alpha}=\alpha\sin t$, the nonnegative Laplacian
acts by
\[
\Delta_{g_\alpha}u
=-\frac{1}{\alpha\sin t}\,\partial_t\!\bigl(\alpha\sin t\,\partial_t u\bigr)
-\frac{1}{\alpha^2\sin^2 t}\,\partial_\vartheta^2 u .
\]
The metric is invariant under rotation in $\vartheta$, so the eigenfunctions separate as
$u(t,\vartheta)=F(t)e^{im\vartheta}$ with $m\in\Z$, and exhaust the spectrum, each
$m$-mode having compact resolvent. The equation
$\Delta_{g_\alpha}u=\la u$ becomes the ordinary differential equation
\begin{equation}\label{eq:spindle-ode}
F''+\cot t\,F'+\Bigl(\la-\frac{m^2}{\alpha^2\sin^2 t}\Bigr)F=0,
\qquad t\in(0,\pi).
\end{equation}

Substitute $x=\cos t\in(-1,1)$. Writing $F(t)=P(x)$, equation \eqref{eq:spindle-ode}
transforms into the associated Legendre equation
\begin{equation}\label{eq:legendre}
(1-x^2)P''-2x\,P'+\Bigl(\nu(\nu+1)-\frac{\mu^2}{1-x^2}\Bigr)P=0,
\end{equation}
of degree $\nu$ and order
\begin{equation}\label{eq:order}
\mu=\frac{|m|}{\alpha},\qquad \la=\nu(\nu+1).
\end{equation}
Thus the eigenvalues are $\la=\nu(\nu+1)$ for the admissible values of $\nu$, which are
fixed by the realization at the two tips $x=\pm1$. For $m\ne0$ these are limit-point;
near $t=0$ the equation is Euler to leading order, with solutions behaving like
$t^{\pm\mu}$, $\mu=|m|/\alpha\ge1$, and $\int_0 t^{-2\mu}\,t\,dt$ diverges, so the
branch $t^{-\mu}$ is not in $L^2(\alpha\sin t\,dt)$ and no condition is needed. For
$m=0$ the tips are limit-circle and the Cheeger form imposes $\alpha\sin t\,F'\to0$,
excluding the logarithmic branch. Either way the admissible solutions are those bounded at $x=\pm1$.
Writing
$P(x)=(1-x^2)^{\mu/2}y(x)$ to factor out the regular boundary behavior,
\eqref{eq:legendre} becomes the Jacobi equation
\begin{equation}\label{eq:jacobi}
(1-x^2)y''-2(\mu+1)x\,y'+\bigl(\nu(\nu+1)-\mu(\mu+1)\bigr)y=0 .
\end{equation}
A solution of \eqref{eq:jacobi} bounded at both endpoints $x=\pm1$ exists if and only
if it is a polynomial, namely the Jacobi polynomial $y=P_n^{(\mu,\mu)}$ of degree
$n\in\Z_{\ge0}$ \cite[\S4.2]{Szego}, which occurs exactly when the constant term equals
the Jacobi eigenvalue,
\[
\nu(\nu+1)-\mu(\mu+1)=n(n+2\mu+1),
\]
that is, when
\begin{equation}\label{eq:nu-quant}
\nu=\mu+n=\frac{|m|}{\alpha}+n,\qquad n\in\Z_{\ge0}.
\end{equation}
Therefore, the eigenvalues of the spindle $\Sph^2_\alpha$ are
\begin{equation}\label{eq:spindle-spec}
\la_{m,n}=\nu(\nu+1),\qquad \nu=\frac{|m|}{\alpha}+n,\qquad m\in\Z,\ n\in\Z_{\ge0},
\end{equation}
with eigenfunction $F_{m,n}(t)e^{im\vartheta}$, where
$F_{m,n}(t)=(\sin t)^{\mu}P_n^{(\mu,\mu)}(\cos t)$ and $\mu=|m|/\alpha$. The
multiplicity of a value is the number of pairs $(m,n)$ producing it, with $m$ and $-m$
giving the same $\nu$.

For $\alpha=1$ this recovers the round sphere; $\nu=|m|+n\in\Z_{\ge0}$, the value
$\ell(\ell+1)$ arising from all pairs with $|m|+n=\ell$, of total multiplicity
$2\ell+1$. For $\alpha<1$ the value $2$ ($\nu=1$) is attained only by the pair
$(m,n)=(0,1)$ and is therefore simple, while the modes $m\ne0$ are displaced upward,
their values $\nu=|m|/\alpha+n$ increasing as $\alpha$ decreases.

We isolate the behavior at the cluster bottom $\ell^2$.

\begin{theorem}\label{thm:spindle-equality}
For every $\ell\ge1$ and $0<\alpha\le1$,
\begin{equation}\label{eq:spindle-equality}
\la_{\ell^2}(\Sph^2_\alpha)=\ell(\ell+1)\ \text{ for }\alpha\in\Bigl[\tfrac{\ell-1}{\ell},1\Bigr],
\qquad
\la_{\ell^2}(\Sph^2_\alpha)>\ell(\ell+1)\ \text{ for }0<\alpha<\tfrac{\ell-1}{\ell},
\end{equation}
where for $\ell=1$ the first interval is read as $(0,1]$.
\end{theorem}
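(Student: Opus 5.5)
The plan is to read off $\la_{\ell^2}(\Sph^2_\alpha)$ from the explicit spectrum \eqref{eq:spindle-spec} by pure counting, taking as given that the modes $F_{m,n}(t)e^{im\vartheta}$ exhaust the spectrum, as established above. Indexing from $0$ with multiplicity, $\la_{\ell^2}=\ell(\ell+1)$ holds if and only if two counts are right. First, the number of pairs $(m,n)\in\Z\times\Z_{\ge0}$ with $\nu=|m|/\alpha+n<\ell$ is at most $\ell^2$. Second, the number with $\nu\le\ell$ is at least $\ell^2+1$. Since $\nu\mapsto\nu(\nu+1)$ is increasing on $\nu\ge0$, everything reduces to counting lattice points under the line $|m|/\alpha+n=\ell$.

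\emph{Lower bound, valid for all $\alpha$.} For $0<\alpha\le1$ we have $|m|/\alpha\ge|m|$. Hence every pair with $\nu<\ell$ also satisfies $|m|+n<\ell$. The number of such round pairs is $\sum_{j<\ell}(2j+1)=\ell^2$, as in Lemma~\ref{lem:block}. So at most $\ell^2$ eigenvalues lie below $\ell(\ell+1)$, and $\la_{\ell^2}(\Sph^2_\alpha)\ge\ell(\ell+1)$ always. Equality is therefore equivalent to
\[
N_\alpha:=\#\{(m,n):\ |m|/\alpha+n\le\ell\}\ \ge\ \ell^2+1 .
\]
I would evaluate this count as
\[
N_\alpha=(\ell+1)+2\sum_{1\le m\le\alpha\ell}\bigl(\lfloor\ell-m/\alpha\rfloor+1\bigr).
\]
Here the first term $\ell+1$ counts the pairs with $m=0$, namely $n=0,\dots,\ell$. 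The factor $2$ accounts for the two signs $\pm m$.

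\emph{Case $\alpha\ge(\ell-1)/\ell$.} For $1\le m\le\ell-1$ we have $m/(m+1)\le(\ell-1)/\ell\le\alpha$, which gives $m/\alpha\le m+1$. Hence $\lfloor\ell-m/\alpha\rfloor\ge\ell-m-1$, and each such $m$ contributes at least $\ell-m$. Also $\alpha\ell\ge\ell-1$, so all these $m$ are admissible in the sum. This yields
\[
N_\alpha\ge\ell+1+2\sum_{m=1}^{\ell-1}(\ell-m)=\ell+1+\ell(\ell-1)=\ell^2+1,
\]
and so equality holds. For $\ell=1$ the sum is empty and $N_\alpha\ge2$ for every $\alpha\in(0,1]$, which matches the convention $(0,1]$ in the statement.

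\emph{Case $\alpha<(\ell-1)/\ell$ (so $\ell\ge2$).} Here I would show the strict inequality $N_\alpha\le\ell^2-1<\ell^2+1$. The values $m=\ell$ and $m=\ell-1$ are excluded, because $(\ell-1)/\alpha>\ell$. For $1\le m\le\ell-2$, $\alpha<1$ gives $m/\alpha>m$, so $\lfloor\ell-m/\alpha\rfloor\le\ell-m-1$, and each $m$ contributes at most $\ell-m$. Hence
\[
N_\alpha\le\ell+1+2\sum_{m=1}^{\ell-2}(\ell-m)=\ell+1+\ell(\ell-1)-2=\ell^2-1,
\]
which forces $\la_{\ell^2}>\ell(\ell+1)$.

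The only delicate point is the bookkeeping at the boundary terms: the floor estimates with non-strict versus strict inequalities, and the precise role of the threshold $m/(m+1)$. There is no analytic obstacle, since completeness and the quantization \eqref{eq:nu-quant} are taken from the preceding discussion.
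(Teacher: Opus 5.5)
Your proposal is correct and follows essentially the paper's argument. Both read $\lambda_{\ell^2}(\Sph^2_\alpha)$ off the explicit spectrum by counting pairs $(m,n)$ with $|m|/\alpha+n<\ell$ and with $|m|/\alpha+n\le\ell$. For $\alpha<\tfrac{\ell-1}{\ell}$ your bound $N_\alpha\le\ell^2-1$ is the paper's estimate via $\lceil\mu/\alpha\rceil\ge\mu+1$ and $\mu\le\ell-2$, shifted by one because you count the constant mode.

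The only difference is in the equality range $[\tfrac{\ell-1}{\ell},1]$. You first get $\lambda_{\ell^2}\ge\ell(\ell+1)$ for every $\alpha$ from $|m|/\alpha\ge|m|$, and then need only the lower bound $N_\alpha\ge\ell^2+1$ from $m/\alpha\le m+1$; this handles the threshold $\alpha=\tfrac{\ell-1}{\ell}$ together with the rest of the interval. The paper instead computes the strict count exactly through the crossing values $\alpha_{m,n}=|m|/(\ell-n)$ and treats the threshold as a separate case.
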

\begin{proof}
For $0<\alpha\le1$ write $\nu(m,n)=|m|/\alpha+n$ and, counting with multiplicity over
$(m,n)\in\Z\times\Z_{\ge0}$ with $(m,n)\neq(0,0)$,
\[
N(\alpha)\;=\;\#\bigl\{(m,n):\ 0<\nu(m,n)<\ell\bigr\},
\qquad
S(\alpha)\;=\;\#\bigl\{(m,n):\ 0<\nu(m,n)\le\ell\bigr\}.
\]
By \eqref{eq:spindle-spec}, $N(\alpha)$ is the number of nonzero eigenvalues strictly
below $\ell(\ell+1)$ and $S(\alpha)$ the number of nonzero eigenvalues at most
$\ell(\ell+1)$, both with multiplicity. Since the nonzero eigenvalues are listed in
nondecreasing order,
\begin{equation}\label{eq:spindle-criterion}
\begin{aligned}
\la_{\ell^2}(\Sph^2_\alpha)=\ell(\ell+1)
&\iff N(\alpha)\le\ell^2-1 \ \text{ and } \ S(\alpha)\ge\ell^2,\\
\la_{\ell^2}(\Sph^2_\alpha)>\ell(\ell+1)
&\iff S(\alpha)\le\ell^2-1.
\end{aligned}
\end{equation}

\emph{The range $\tfrac{\ell-1}{\ell}<\alpha\le1$.}
A mode $(m,n)$ with $|m|+n\le\ell-1$ and $|m|\ge1$ satisfies
$\nu=\bigl(|m|+n\bigr)+|m|\bigl(\tfrac1\alpha-1\bigr)$, which equals $\ell$ exactly at
the threshold
\[
\alpha_{m,n}\;=\;\frac{|m|}{\ell-n}.
\]
Over $|m|+n\le\ell-1$ with $|m|\ge1$, this is maximal only at $(\pm(\ell-1),0)$, where
it equals $\tfrac{\ell-1}{\ell}$; for fixed $n$ it increases in $|m|$, and its extreme
value $(\ell-1-n)/(\ell-n)$ decreases in $n$. As $\nu(m,n)$ is strictly decreasing in
$\alpha$ for $m\neq0$, every mode with $0<|m|+n\le\ell-1$ therefore has $\nu<\ell$
throughout $\alpha\in(\tfrac{\ell-1}{\ell},1]$, while $|m|+n\ge\ell$ gives
$\nu\ge|m|+n\ge\ell$ for every $\alpha\le1$. Hence
\[
N(\alpha)\;=\;\#\{(m,n)\neq(0,0):|m|+n\le\ell-1\}\;=\;\ell^2-1 ,
\]
and the radial mode $(0,\ell)$ gives $S(\alpha)\ge N(\alpha)+1=\ell^2$, so
$\la_{\ell^2}(\Sph^2_\alpha)=\ell(\ell+1)$ by \eqref{eq:spindle-criterion}.

\emph{The threshold $\alpha=\tfrac{\ell-1}{\ell}$, $\ell\ge2$.} Here the pair
$(\pm(\ell-1),0)$ sits exactly at $\nu=\ell$ and all other modes with
$0<|m|+n\le\ell-1$ still satisfy $\nu<\ell$, so $N(\alpha)=\ell^2-3$; together with
$(\pm(\ell-1),0)$ and the radial mode $(0,\ell)$,
\[
S(\alpha)\;\ge\;(\ell^2-3)+3\;=\;\ell^2 ,
\]
and again $\la_{\ell^2}(\Sph^2_\alpha)=\ell(\ell+1)$ by
\eqref{eq:spindle-criterion}.

\emph{The range $\alpha<\tfrac{\ell-1}{\ell}$.} We count $S(\alpha)$ directly,
including the nonradial modes that may sit exactly at $\nu=\ell$. For
$\mu=|m|\ge0$, the values of $n$ with
$\nu(\mu,n)\le\ell$ are $0\le n\le \ell-\lceil\mu/\alpha\rceil$, which is nonempty only
for $\mu\le\alpha\ell$; each $\mu\ge1$ contributes twice ($m=\pm\mu$). Hence, excluding
the constant mode,
\[
S(\alpha)\;=\;\ell\;+\;2\sum_{\mu=1}^{\lfloor\alpha\ell\rfloor}
\Bigl(\ell+1-\bigl\lceil\tfrac{\mu}{\alpha}\bigr\rceil\Bigr).
\]
Since $\alpha<1$ gives $\lceil\mu/\alpha\rceil\ge\mu+1$, each summand is at most
$\ell-\mu$; and $\alpha\ell<\ell-1$ bounds the range of summation by $\mu\le\ell-2$.
Therefore
\[
S(\alpha)\;\le\;\ell+2\sum_{\mu=1}^{\ell-2}(\ell-\mu)
\;=\;\ell+2\Bigl(\frac{\ell(\ell-1)}{2}-1\Bigr)
\;=\;\ell^2-2\;\le\;\ell^2-1 ,
\]
so $\la_{\ell^2}(\Sph^2_\alpha)>\ell(\ell+1)$ by \eqref{eq:spindle-criterion}. This
proves \eqref{eq:spindle-equality}.
\end{proof}

Collapsing the spindles produces the Legendre model itself.
We exhibit it through a computation below.
\begin{corollary}\label{cor:spindle-collapse}
For every $\alpha\in(0,1]$, the radial part of the spectrum
\eqref{eq:spindle-spec}, namely the eigenvalues with $m=0$, is
$\{j(j+1):j\ge0\},$
independently of $\alpha$, with eigenfunctions $P_j(\cos t)$. Moreover,
$\Area(\Sph^2_\alpha)=4\pi\alpha,$
and, with normalized area measures,
\[
\Bigl(\Sph^2_\alpha,d_{g_\alpha},
\frac{\vol_{g_\alpha}}{\Area(\Sph^2_\alpha)}\Bigr)
\longrightarrow
\Bigl([0,\pi],|\cdot|,\tfrac12\sin t\,dt\Bigr)
\]
in the mGH topology as $\alpha\to0$. The limit is the Legendre model up to
the constant factor $\tfrac12$ in the measure, which does not affect its
operator or spectrum.
\end{corollary}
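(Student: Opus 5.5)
The corollary has three parts: the radial spectrum, the area, and the mGH limit. I would prove them in that order, reading everything off the separation of variables already carried out for \eqref{eq:spindle-spec}.

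\emph{Radial spectrum.} Setting $m=0$ in \eqref{eq:order} gives order $\mu=0$. The quantization \eqref{eq:nu-quant} then reads $\nu=n$, so the radial eigenvalues are $n(n+1)$, $n\ge0$, with no dependence on $\alpha$. Equivalently, the radial equation \eqref{eq:spindle-ode} with $m=0$ does not involve $\alpha$: the factor $\alpha$ cancels from $(\alpha\sin t)^{-1}\partial_t(\alpha\sin t\,\partial_t)$. It is the Legendre equation in $x=\cos t$. Its solutions bounded at $x=\pm1$, which the text above identifies as the ones selected by the Cheeger form, are the Legendre polynomials $P_j(\cos t)=P_j^{(0,0)}(\cos t)$.

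\emph{Area.} The area is the direct integral
\[
\int_0^{2\pi}\!\!\int_0^\pi\alpha\sin t\,dt\,d\vartheta=4\pi\alpha .
\]

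\emph{mGH convergence.} I would consider the projection $p\colon\Sph^2_\alpha\to[0,\pi]$, $(t,\vartheta)\mapsto t$.
\begin{itemize}[leftmargin=1.5em]
\item \emph{$p$ is $1$-Lipschitz.} We have $g_\alpha\ge dt^2$, so any curve has length at least the total variation of its $t$-coordinate.
\item \emph{Distances nearly agree.} Any two points lie on meridians, and the points $(t,\vartheta)$ and $(t,\vartheta')$ are joined along their latitude circle by a curve of length at most $\pi\alpha\sin t\le\pi\alpha$. Hence
\[
|t-t'|\le d_{g_\alpha}\bigl((t,\vartheta),(t',\vartheta')\bigr)\le|t-t'|+\pi\alpha .
\]
\item \emph{Surjectivity.} $p$ is onto, so it is a $\pi\alpha$-GH approximation.
\item \emph{Measures.} Pushing forward the normalized area measure gives
\[
p_\#\Bigl(\frac{\alpha\sin t\,dt\,d\vartheta}{4\pi\alpha}\Bigr)=\tfrac12\sin t\,dt,
\]
exactly, for every $\alpha$. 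So the measure component converges trivially, and the triple converges in the mGH sense as $\alpha\to0$.
\end{itemize}

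Finally, multiplying the measure by a constant changes neither $\mathsf L_\varphi$ nor the Neumann problem \eqref{eq:slp}, since $\varphi$ appears homogeneously. So the limit has the spectrum of the Legendre model $\varphi=\sin$. By the first part, this spectrum is $\{j(j+1)\}$.

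The main point needing care is the mGH convergence, which hinges on the distance estimate above. This estimate is elementary here because $p$ is $1$-Lipschitz and the latitude circles have length at most $2\pi\alpha$.
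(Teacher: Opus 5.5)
Your proposal is correct and follows the paper's proof essentially step for step: you read off the radial spectrum by setting $m=0$ in the quantization $\nu=|m|/\alpha+n$, compute the area as a direct integral, and use the projection $(t,\vartheta)\mapsto t$ as a surjective $\pi\alpha$-GH approximation that pushes the normalized measure forward exactly to $\tfrac12\sin t\,dt$. Your justification of the distance estimate ($1$-Lipschitz because $g_\alpha\ge dt^2$, and a meridian-then-latitude path of length at most $|t-t'|+\pi\alpha$) makes explicit what the paper states without proof.
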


\begin{proof}
For $m=0$, \eqref{eq:nu-quant} gives $\nu=n$, hence
\[
\lambda_{0,n}=n(n+1),
\qquad
F_{0,n}=P_n(\cos t),
\]
independently of $\alpha$. Also
\[
\Area(\Sph^2_\alpha)
=
\int_0^{2\pi}\int_0^\pi\alpha\sin t\,dt\,d\vartheta
=
4\pi\alpha.
\]

Let $p_\alpha:\Sph^2_\alpha\to[0,\pi]$ be the projection
$p_\alpha(t,\vartheta)=t$. Since
\[
g_\alpha=dt^2+\alpha^2\sin^2t\,d\vartheta^2,
\]
we have
\[
|t_x-t_y|
\le d_{g_\alpha}(x,y)
\le |t_x-t_y|+\pi\alpha.
\]
Thus $p_\alpha$ is a surjective $\pi\alpha$-Gromov--Hausdorff
approximation. Moreover,
\[
(p_\alpha)_\#
\Bigl(\frac{\vol_{g_\alpha}}{4\pi\alpha}\Bigr)
=
\frac{2\pi\alpha\sin t}{4\pi\alpha}\,dt
=
\tfrac12\sin t\,dt.
\]
Hence the normalized spaces converge in the mGH topology to
$([0,\pi],|\cdot|,\tfrac12\sin t\,dt)$.

Finally, the limiting operator is
\[
-\frac1{\sin t}\frac{d}{dt}
\left(\sin t\,\frac{d}{dt}\right),
\]
since the constant factor $\tfrac12$ in the density cancels. Its natural
boundary conditions are those at the two endpoints, and its spectrum is
the radial spectrum above, namely $\{j(j+1):j\ge0\}$.
\end{proof}

\section{Non-collapse of minimizing sequences}\label{sec:noncollapse}

\begin{theorem}\label{thm:noncollapse}
Let $k\ge2$ and let $(g_n)\subset\Cc$ satisfy
$\Area(\Sph^2,g_n)\to0$. Then
\begin{equation}\label{eq:collapse-cost}
\liminf_{n\to\infty}\lambda_k(\Sph^2,g_n)
\ge k(k+1)>\lambda_k(\Sph^2).
\end{equation}
Consequently, no minimizing sequence for $f_k$ collapses.
\end{theorem}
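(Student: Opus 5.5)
We argue along a subsequence. Pass to a subsequence along which $\lambda_k(\Sph^2,g_n)$ converges to the $\liminf$ in \eqref{eq:collapse-cost}. By Bonnet--Myers every $g_n$ has $\diam(g_n)\le\pi$, so after a further subsequence $\diam(g_n)\to D\in[0,\pi]$. We then split into two cases according to whether the limit diameter $D$ vanishes.

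\emph{Case $D=0$.} Each $g_n$ has $K\ge1>0$, so it has nonnegative Ricci curvature. The Zhong--Yang (Li--Yau) bound for nonnegative Ricci curvature, also available on $\mathrm{RCD}(0,N)$ spaces, gives
\[
\lambda_1(g_n)\ \ge\ \frac{\pi^2}{\diam(g_n)^2}.
\]
Hence $\lambda_k(g_n)\ge\lambda_1(g_n)\to\infty$, and \eqref{eq:collapse-cost} holds trivially.

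\emph{Case $D>0$.} Since $\Area(g_n)\to0$, Theorem~\ref{thm:collapse-structure} applies to the normalized spaces. Passing to a further subsequence, they converge in the mGH sense to $X=([0,D],|\cdot|,c\,\varphi\,ds)$, where $\varphi$ is continuous on $[0,D]$, positive on $(0,D)$, and satisfies $\varphi''+\varphi\le0$. Normalizing the measure by a constant does not change the Laplacian or its spectrum. All spaces involved, including the limit, are $\mathrm{RCD}(1,\infty)$. The spectral convergence theorem of Ambrosio--Honda / Gigli--Mondino--Savar\'e (Proposition~\ref{prop:discrete}, in its general form, which allows collapsed limits) therefore gives
\[
\lambda_k(g_n)\ \to\ \lambda_k(X).
\]
Next we identify $\lambda_k(X)$ with $\mu_k(\varphi)$. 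The Cheeger-energy Laplacian on $X$ is the Neumann realization of $\mathsf L_\varphi$ in \eqref{eq:slp}; this is the identification made in the Appendix. Its spectrum is simple by Theorem~\ref{thm:slp_discrete}, since $\varphi$ is concave, so the $k$-th eigenvalue with multiplicity is exactly $\mu_k(\varphi)$. Theorem~\ref{thm:intro-1d} then yields
\[
\liminf_{n\to\infty}\lambda_k(g_n)\ =\ \mu_k(\varphi)\ \ge\ k(k+1).
\]

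\emph{The strict inequality and the consequence.} Let $\ell$ be the integer with $\ell^2\le k<(\ell+1)^2$. By Lemma~\ref{lem:block}, $\lambda_k(\Sph^2)=\ell(\ell+1)$. For $k\ge2$ we have $\ell\le\sqrt k<k$, hence $\ell(\ell+1)<k(k+1)$. A minimizing sequence satisfies $\lim\lambda_k(g_n)=\inf_\Cc f_k\le\lambda_k(\Sph^2)<k(k+1)$. It therefore cannot have $\Area\to0$ along any subsequence, so its areas stay bounded below and it does not collapse.

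The main obstacle is the identification step in the case $D>0$. One must make sure that spectral convergence is applied with the normalized measures, which is harmless because eigenvalues are invariant under constant rescaling of the measure. One must also check that the limit Cheeger Laplacian has exactly the boundary behaviour $\varphi u'\to0$ at the endpoints, including at endpoints where $\varphi$ vanishes; this is what makes Theorem~\ref{thm:intro-1d} applicable at index $k$ with multiplicity one. If the Appendix only gives this for concave $\varphi$, that suffices here by the remark following Theorem~\ref{thm:slp_discrete}.
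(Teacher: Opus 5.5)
Your proposal is correct and follows the paper's proof essentially step for step: the same diameter dichotomy, Zhong--Yang when $D=0$, and, when $D>0$, the collapse structure theorem followed by general $\mathrm{RCD}$ spectral convergence (rather than Proposition~\ref{prop:discrete}, which only covers limits inside $\overline\Cc$) and Theorem~\ref{thm:intro-1d}, closing with the same comparison $\ell<k$. The only difference is that you assert $\lambda_k(X)=\mu_k(\varphi)$; the paper's Lemma~\ref{lem:cheeger-identification} records only $\lambda_k(X)\ge\mu_k(\varphi)$, from the inclusion of the Cheeger form domain in $H^1(\varphi)$, and that inequality is all the argument needs.
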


\begin{proof}
Equip $(\Sph^2,g_n)$ with the normalized measure
$\mathfrak m_n=\vol_{g_n}/\Area(g_n)$. This does not change the curvature-dimension
condition or the Laplace spectrum, so $(\Sph^2,d_{g_n},\mathfrak m_n)$ is a compact
$\mathrm{RCD}(1,2)$ space. Passing to a subsequence, assume
\[
\diam(\Sph^2,g_n)\to D\in[0,\pi].
\]

If $D=0$, the Zhong--Yang estimate \cite{ZhongYang} gives
\[
\lambda_k(g_n)\ge\lambda_1(g_n)
\ge\frac{\pi^2}{\diam(\Sph^2,g_n)^2}\longrightarrow\infty.
\]
Thus suppose $D>0$. By Theorem~\ref{thm:collapse-structure}, after passing to a further
subsequence,
\[
(\Sph^2,d_{g_n},\mathfrak m_n)
\longrightarrow
X=([0,D],c\,\varphi\,ds)
\]
in the mGH sense, where $\varphi$ satisfies \eqref{eq:intro-density}. mGH convergence
of compact $\mathrm{RCD}(1,2)$ spaces implies Mosco convergence of the Cheeger
energies \cite{GMS,AmbrosioHonda17}, hence convergence of the spectra with
multiplicity \cite{AmbrosioHonda17,AmbrosioHonda} within the variational framework
of \cite{KuwaeShioya}, so
\[
\lambda_k(g_n)\longrightarrow\lambda_k(X).
\]
The limit being one-dimensional, this is the general $\mathrm{RCD}$ convergence and
not Proposition~\ref{prop:discrete}, which concerns limits within $\overline\Cc$.
By Lemma~\ref{lem:cheeger-identification} and
Theorem~\ref{thm:intro-1d},
\[
\lambda_k(X)\ge\mu_k(\varphi)\ge k(k+1).
\]
Since every subsequence admits such a further subsequence, \eqref{eq:collapse-cost}
follows.

Finally, if $\ell^2\le k\le(\ell+1)^2-1$, then, by Lemma~\ref{lem:block},
\[
\lambda_k(\Sph^2)=\ell(\ell+1),
\]
and $k\ge2$ implies $\ell<k$. Hence
\[
k(k+1)>\ell(\ell+1)=\lambda_k(\Sph^2).
\]
\end{proof}

Non-collapse yields existence of a minimizer in the completion, since the eigenvalue
functional is continuous there by Proposition~\ref{prop:discrete}.

\begin{corollary}\label{cor:existence}
For every $k\ge2$, the infimum
\[
I_k:=\inf_\Cc\lambda_k
\]
is attained by some $g\in\overline\Cc$, and
\[
I_k=\inf_{\overline\Cc}\lambda_k\le\lambda_k(\Sph^2).
\]
\end{corollary}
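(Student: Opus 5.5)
We prove Corollary~\ref{cor:existence} by the direct method in the mGH completion, with Theorem~\ref{thm:noncollapse} supplying the compactness that keeps the limit two-dimensional.

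\textbf{Step 1: the upper bound and the two infima.} Fix $k\ge2$. Since $(\Sph^2,\round)\in\Cc$, we have $I_k\le\la_k(\Sph^2)$. Since $\Cc\subset\overline\Cc$, we get $\inf_{\overline\Cc}\la_k\le I_k$. For the reverse inequality, take $g\in\overline\Cc$. By Lemma~\ref{lem:dense} there are $g_t\in\Cc$ with $g_t\to g$ in the mGH sense. By Proposition~\ref{prop:discrete}, $\la_k(g_t)\to\la_k(g)$, and therefore $I_k\le\la_k(g)$. Hence $I_k=\inf_{\overline\Cc}\la_k$.

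\textbf{Step 2: extracting a two-dimensional limit.} Take a minimizing sequence $(g_n)\subset\Cc$ with $\la_k(g_n)\to I_k$.

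First, the areas stay bounded below. If not, a subsequence has $\Area(g_n)\to0$, and Theorem~\ref{thm:noncollapse} gives $\liminf\la_k(g_n)\ge k(k+1)>\la_k(\Sph^2)\ge I_k$, a contradiction. So $\Area(g_n)\ge a>0$ along the whole sequence.

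Second, we get precompactness. Bonnet--Myers gives $\diam\le\pi$ and \eqref{eq:area-bound} gives $\Area\le4\pi$. Gromov precompactness for Alexandrov spaces \cite{BGP}, together with weak compactness of the measures \cite{GMS}, then yields a subsequence converging in the mGH sense to a compact Alexandrov space $X$ of curvature $\ge1$.

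Third, the limit is two-dimensional. Under noncollapsed GH convergence of Alexandrov surfaces, $\mathcal H^2$ is continuous (Burago--Gromov--Perelman volume convergence), so $\mathcal H^2(X)=\lim\Area(g_n)\ge a>0$. Thus $X$ has dimension two, and the limit measure is $\mathcal H^2$ itself, not merely a renormalized measure. By the converse part of Lemma~\ref{lem:dense}, $X\in\overline\Cc$.

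\textbf{Step 3: passing the eigenvalue to the limit.} It remains to show $\la_k(X)=\lim\la_k(g_n)=I_k$. Proposition~\ref{prop:discrete} is stated for sequences inside $\overline\Cc$, and $\Cc\subset\overline\Cc$, so it applies here directly. Alternatively, the same conclusion follows from Mosco convergence of the Cheeger energies along mGH-convergent $\mathrm{RCD}(1,2)$ sequences \cite{GMS,AmbrosioHonda17}. In either case $\la_k(X)=I_k$, so $X$ attains the infimum.

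\textbf{Main obstacle.} The delicate step is the identification of measures in Step 2. Proposition~\ref{prop:discrete} uses area measures, whereas mGH compactness is most naturally phrased with normalized measures. The fix is to normalize by area, which changes neither the spectrum nor the RCD condition. Volume continuity then ensures the limit measure is a positive multiple of $\mathcal H^2$ on a two-dimensional space, i.e.\ of the area measure of $X$. Because eigenvalues are invariant under scaling the measure by a constant, this suffices.
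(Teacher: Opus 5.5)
Your proposal is correct and follows the paper's proof essentially step for step. A minimizing sequence in $\Cc$ is precompact by the area and diameter bounds. Theorem~\ref{thm:noncollapse}, using $\la_k(\Sph^2)<k(k+1)$ for $k\ge2$, excludes collapse, so the limit lies in $\overline\Cc$. Proposition~\ref{prop:discrete} then passes $\la_k$ to the limit, and Lemma~\ref{lem:dense} together with continuity identifies $\inf_{\overline\Cc}\la_k$ with $I_k$.

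Your added care about the uniform area lower bound, volume continuity, and the limit carrying its $\mathcal H^2$ measure only makes explicit what the paper leaves implicit when it says non-collapse makes $X$ two-dimensional.
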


\begin{proof}
Let $(g_n)\subset\Cc$ be a minimizing sequence. By
\eqref{eq:area-bound}, Bonnet--Myers, and mGH compactness
\cite{BGP,GMS}, after passing to a subsequence,
\[
g_n\longrightarrow X.
\]
Since
\[
\lambda_k(g_n)\longrightarrow I_k
\le\lambda_k(\Sph^2)<k(k+1),
\]
Theorem~\ref{thm:noncollapse} rules out collapse. Hence $X$ is two-dimensional,
so $X\in\overline\Cc$. Proposition~\ref{prop:discrete} gives
\[
\lambda_k(X)=\lim_{n\to\infty}\lambda_k(g_n)=I_k.
\]
Thus $X$ attains the infimum. Finally, density of $\Cc$ in $\overline\Cc$
(Lemma~\ref{lem:dense}) and continuity of $\lambda_k$ give
\[
\inf_{\overline\Cc}\lambda_k=I_k.
\]
\end{proof}

\begin{remark}\label{rem:fk-extension}
For Theorem~\ref{thm:intro-non-collapse}, the functional $f_k$ is understood on the
full mGH completion of $\Cc$. After normalization, every element of the completion is
a compact $\mathrm{RCD}(1,2)$ space, so its Cheeger Laplacian has discrete spectrum
\cite[\S7]{GMS}, and $f_k$ is defined by \eqref{eq:minmax}. On the one-point space,
$L^2(\mathfrak m)$ is one-dimensional, so for $k\ge1$ no $(k+1)$-dimensional subspace
exists and $f_k=+\infty$.
\end{remark}

\begin{proof}[Proof of Theorem~\ref{thm:intro-non-collapse}]
Let $g$ minimize $f_k$ in the mGH completion. Since the round sphere is admissible,
\[
\lambda_k(g)\le\lambda_k(\Sph^2)<k(k+1).
\]
If the underlying Alexandrov space had dimension at most one, then either it is a
point, in which case $f_k(g)=+\infty$ by Remark~\ref{rem:fk-extension}, or it is a
nontrivial one-dimensional limit of a collapsing sequence. In the latter case,
Theorem~\ref{thm:collapse-structure}, Lemma~\ref{lem:cheeger-identification}, and
Theorem~\ref{thm:intro-1d} give
\[
\lambda_k(g)\ge\mu_k(\varphi)\ge k(k+1),
\]
a contradiction. Hence $g$ is two-dimensional.
\end{proof}

\section{Smooth minimizers are round}\label{sec:smooth}

Throughout this section, we assume that $g_\star=e^{2u}\round$ is a smooth
metric in $\Cc$ attaining $\inf_\Cc\lambda_k$ for a fixed $k\ge2$, and denote
its Gaussian curvature by $K_\star$.

\begin{lemma}\label{lem:slack}
Suppose $K_\star\not\equiv1$. Then the set $U=\{p\in\Sph^2:K_\star(p)>1\}$ is nonempty
and open, and there exist a nonempty open ball $V$ with compact closure
$\overline V\subset U$ and a constant $\delta>0$ such that
\[
K_\star\ \ge\ 1+\delta\qquad\text{on }\overline V .
\]
\end{lemma}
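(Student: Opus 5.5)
The plan is to combine the curvature constraint for the class with continuity of the smooth curvature function. Since $g_\star\in\Cc$, we have $K_\star\ge1$ everywhere on $\Sph^2$. The hypothesis $K_\star\not\equiv1$ therefore produces a point $p_0$ with $K_\star(p_0)\ne1$, and this forces $K_\star(p_0)>1$. Hence $U\ne\emptyset$.

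Next, $g_\star=e^{2u}\round$ with $u\in C^\infty(\Sph^2)$. By \eqref{eq:K-conformal}, $K_\star=e^{-2u}(1+\Delta_0u)$ is a smooth, in particular continuous, function on $\Sph^2$. The set $U=K_\star^{-1}\bigl((1,\infty)\bigr)$ is then the preimage of an open set under a continuous map, so it is open.

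To build $V$ and $\delta$, set $\eta=K_\star(p_0)-1>0$. By continuity there is an $r>0$ such that $K_\star>1+\eta/2$ on the geodesic ball $B_{\round}(p_0,r)$. Take $V=B_{\round}(p_0,r/2)$, which is a nonempty open ball. Its closure lies in the closed ball of radius $r/2$, which is contained in $B_{\round}(p_0,r)\subset U$, and this closure is compact because $\Sph^2$ is compact. Finally put $\delta=\eta/2$, so that $K_\star\ge1+\delta$ on $\overline V$.

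If we wanted $\delta$ to be optimal on $\overline V$, we could instead take $\delta=\min_{\overline V}K_\star-1$. This minimum is attained by compactness and is positive because $\overline V\subset U$.

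There is no real obstacle here. The only points needing care are the use of the pointwise constraint $K_\star\ge1$, which turns $\not\equiv1$ into strict excess somewhere, and the use of smoothness of $u$ to get continuity of $K_\star$. The whole statement rests on $g_\star$ being smooth; it would fail for a singular member of $\overline\Cc$.
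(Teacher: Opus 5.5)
Your proof is correct and is essentially the paper's argument: you use continuity of $K_\star$ for openness, the constraint $K_\star\ge1$ to turn $K_\star\not\equiv1$ into a point of strict excess, and $\delta=\tfrac12(K_\star(p_0)-1)$ with a small ball around that point. The only cosmetic difference is that you halve the radius to place $\overline V$ inside the open ball, whereas the paper takes the closed ball of radius $r$ directly and sets $V=B(p,r)$.
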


\begin{proof}
$K_\star$ is continuous, so $U=K_\star^{-1}\bigl((1,\infty)\bigr)$ is open; and $U$ is
nonempty, since $U=\varnothing$ would give $K_\star\le1$, hence $K_\star\equiv1$ with
the constraint. Choose $p\in U$ and set $\delta=\tfrac12(K_\star(p)-1)>0$. By
continuity there is $r>0$ with $K_\star\ge1+\delta$ on the closed coordinate ball
$\overline B(p,r)$, which is then compact and contained in $U$; take $V=B(p,r)$.
\end{proof}

We recall the constraint functional from \S\ref{sec:conformal}. Writing
\begin{equation}\label{eq:constraint-fnl}
\mathcal F(v)=1+\Delta_0 v-e^{2v},
\end{equation}
a conformal factor $v$ lies in the admissible set $\mathcal U$, equivalently
$e^{2v}\round\in\Cc$, precisely when $\mathcal F(v)\ge0$ on $\Sph^2$
(Proposition~\ref{prop:conformal-class}); and by \eqref{eq:K-conformal},
\begin{equation}\label{eq:F-curv}
\mathcal F(v)=e^{2v}\bigl(K_v-1\bigr),
\end{equation}
where $K_v$ is the curvature of $e^{2v}\round$. In particular
$\mathcal F(u)=e^{2u}(K_\star-1)\ge0$, with $\mathcal F(u)\ge\delta\,e^{2u}$ on
$\overline V$ by Lemma~\ref{lem:slack}.
\begin{lemma}\label{lem:twosided}
Suppose $K_\star\not\equiv1$, and let $V,\delta$ be as in Lemma~\ref{lem:slack}. For
every $v\in C_c^\infty(V)$ there exists $\eps_0>0$ such that
\[
u+\eps v\in\mathcal U
\qquad\text{for all }\eps\in[-\eps_0,\eps_0].
\]
\end{lemma}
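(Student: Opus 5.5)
Write $\mathcal F(u+\eps v)=1+\Delta_0u+\eps\Delta_0v-e^{2u}e^{2\eps v}$. We will check $\mathcal F(u+\eps v)\ge0$ separately off and on the support of $v$. Set $S=\supp v$, a compact subset of $V$.

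\emph{Off the support.} On $\Sph^2\setminus S$ we have $v\equiv0$ and $\Delta_0v\equiv0$. Hence $\mathcal F(u+\eps v)=\mathcal F(u)=e^{2u}(K_\star-1)\ge0$ there, for every $\eps$, because $u\in\mathcal U$.

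\emph{On $\overline V$.} We expand
\[
\mathcal F(u+\eps v)=\mathcal F(u)+\eps\,\Delta_0v-e^{2u}\bigl(e^{2\eps v}-1\bigr).
\]
By Lemma~\ref{lem:slack} and \eqref{eq:F-curv}, $\mathcal F(u)\ge\delta e^{2u}\ge\delta m$ on $\overline V$, where $m=\min_{\Sph^2}e^{2u}>0$.

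For the two perturbation terms, let $A=\|\Delta_0v\|_\infty$, $B=\|v\|_\infty$ and $M=\max e^{2u}$. For $|\eps|\le1$ the mean value theorem gives
\[
|e^{2\eps v}-1|\le 2|\eps|B\,e^{2B}.
\]
The perturbation is therefore bounded in absolute value by $|\eps|\bigl(A+2MBe^{2B}\bigr)$.

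Now choose
\[
\eps_0=\min\Bigl\{1,\ \frac{\delta m}{A+2MBe^{2B}+1}\Bigr\}.
\]
With this choice, $\mathcal F(u+\eps v)\ge\delta m-|\eps|(A+2MBe^{2B})>0$ on $\overline V$ for all $|\eps|\le\eps_0$.

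Since $S\subset\overline V$, the two regions together cover $\Sph^2$, so $\mathcal F(u+\eps v)\ge0$ everywhere. In addition, $u+\eps v$ is smooth, so $u+\eps v\in\mathcal U$ by Proposition~\ref{prop:conformal-class}.

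There is no real obstacle here. The only point needing care is that the strict slack $\delta e^{2u}$ is uniform on the compact set $\overline V$, which is exactly what Lemma~\ref{lem:slack} supplies. This uniformity is what lets us perturb in both signs of $\eps$: the term $\eps\Delta_0v$ has no sign, so a mere pointwise positivity of $K_\star-1$ would not suffice.
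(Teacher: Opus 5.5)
Your proof is correct and follows the paper's argument: the same split into the complement of $\supp v$ and $\overline V$, the same uniform slack $\delta\min e^{2u}$ from Lemma~\ref{lem:slack}, and the same linear-in-$|\eps|$ bound on $\eps\Delta_0v-e^{2u}(e^{2\eps v}-1)$ for $|\eps|\le1$. The extra $+1$ in your denominator is a minor improvement: it avoids dividing by zero when $v\equiv0$ and gives strict positivity on $\overline V$.
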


\begin{proof}
Let $S=\supp v\subset V$. Outside $S$ the perturbation vanishes, so
$\mathcal F(u+\eps v)=\mathcal F(u)\ge0$. On $S$,
\[
\mathcal F(u+\eps v)
=
\mathcal F(u)+\eps\Delta_0v
-e^{2u}(e^{2\eps v}-1).
\]
By Lemma~\ref{lem:slack},
\[
\mathcal F(u)=e^{2u}(K_\star-1)
\ge\delta e^{2u}
\ge\delta m_0,
\qquad
m_0:=\min_{\overline V}e^{2u}>0.
\]
Set
\[
M_1:=\|\Delta_0v\|_\infty,
\qquad
M_2:=2\|e^{2u}\|_{L^\infty(V)}
\|v\|_\infty e^{2\|v\|_\infty}.
\]
Using $|e^x-1|\le |x|e^{|x|}$, for $|\eps|\le1$ we obtain
\[
\mathcal F(u+\eps v)
\ge
\delta m_0-(M_1+M_2)|\eps|.
\]
Thus the conclusion follows by taking
\[
\eps_0
=
\min\left\{
1,\frac{\delta m_0}{M_1+M_2}
\right\}.
\]
\end{proof}

The perturbation family yields a linear relation among the eigenfunctions on the
slack region. Fix an $L^2(e^{2u}\dd)$-orthonormal basis
$\phi^{(1)},\dots,\phi^{(m)}$ of the $\lambda$-eigenspace, where
$\lambda=\lambda_k(g_\star)$ has multiplicity $m$. Write
$r\in\{1,\dots,m\}$ for the position of $k$ in its eigenvalue block, and for
$v\in C_c^\infty(V)$ define
\begin{equation}\label{eq:Mmatrix}
M[v]_{ij}
=
\int_{\Sph^2}v\,\phi^{(i)}\phi^{(j)}e^{2u}\dd,
\qquad 1\le i,j\le m.
\end{equation}

\begin{lemma}\label{lem:relation}
Suppose $K_\star\not\equiv1$, with $V,\delta$ as in Lemma~\ref{lem:slack}. Then there
exist an open ball $V'\subset V$ and a vector $0\ne c=(c_1,\dots,c_m)\in\R^m$ such that
the eigenfunction combination
\[
\psi=\sum_{i=1}^m c_i\,\phi^{(i)}
\]
vanishes identically on $V'$.
\end{lemma}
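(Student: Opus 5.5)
The plan is a first-order perturbation of the eigenvalue cluster, followed by a convex-separation argument applied to the linear space of matrices $\{M[v]\}$. In fact I expect to get $V'=V$.

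\textbf{Step 1: first-order expansion.} Fix $v\in C_c^\infty(V)$. By Lemma~\ref{lem:twosided}, $u+\eps v\in\mathcal U$ for $|\eps|\le\eps_0$, so minimality gives
\[
\lambda_k\bigl(e^{2(u+\eps v)}\round\bigr)\ge\lambda=\lambda_k(g_\star)\qquad\text{for both signs of }\eps.
\]
By Proposition~\ref{prop:conformal-class} these are the eigenvalues of $\Delta_0\phi=\lambda e^{2u}e^{2\eps v}\phi$. The weight depends analytically on $\eps$ and the energy form is fixed, so Kato--Rellich analytic perturbation theory applies (after rewriting the problem as a self-adjoint family on the fixed space $L^2(e^{2u}\dd)$, e.g.\ by the substitution $\phi=e^{-\eps v}\tilde\phi$).

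It gives the $m$ eigenvalue branches emanating from $\lambda$ as
\[
\lambda\bigl(1-2\eps\,\tau_i\bigr)+o(\eps),
\]
where $\tau_1\ge\dots\ge\tau_m$ are the eigenvalues of the symmetric matrix $M[v]$ in \eqref{eq:Mmatrix}. The factor $2v e^{2u}$ is the derivative of the weight, which enters the Rayleigh quotient only through the mass. All other eigenvalues stay a fixed distance away from $\lambda$. So $\lambda_k(\eps)$ is the $r$-th smallest of these branches.

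\textbf{Step 2: sign conditions.} For $\eps>0$ the $r$-th smallest branch corresponds to the $r$-th largest $\tau$, so $\lambda_k(\eps)\ge\lambda$ forces $\tau_r\le0$. For $\eps<0$ it corresponds to the $r$-th smallest $\tau$, so $\tau_{m-r+1}\ge0$.

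In particular, for every $v\in C_c^\infty(V)$ the matrix $M[v]$ is never positive definite. Indeed, positive definiteness would give $\tau_r>0$.

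\textbf{Step 3: separation.} The set $L=\{M[v]:v\in C_c^\infty(V)\}$ is a linear subspace of $\mathrm{Sym}_m(\R)$, because $v\mapsto M[v]$ is linear. By Step 2 it is disjoint from the open convex cone of positive definite matrices. Hahn--Banach, in the finite-dimensional space $\mathrm{Sym}_m$ with the trace pairing, yields $A\ne0$ with
\[
\operatorname{tr}(AB)\ge0\ \text{for all } B\succ0,\qquad \operatorname{tr}(AM)=0\ \text{for all } M\in L.
\]
The first condition makes $A$ positive semidefinite, since the positive definite cone is self-dual. Write $A=\sum_a c^{(a)}c^{(a)T}$ with nonzero vectors $c^{(a)}$, and set $\psi_a=\sum_i c^{(a)}_i\phi^{(i)}$. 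Then $\operatorname{tr}(AM[v])=0$ reads
\[
\int v\sum_a\psi_a^2\,e^{2u}\dd=0\qquad\text{for all }v\in C_c^\infty(V).
\]
Hence $\sum_a\psi_a^2\equiv0$ on $V$, since the $\psi_a$ are smooth and $e^{2u}>0$. So each $\psi_a$ vanishes on $V$. Taking $c=c^{(1)}$ and $V'=V$ proves the lemma.

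\textbf{Main obstacle.} The delicate step is Step 1: justifying the one-sided first-order expansion of $\lambda_k$ inside a degenerate cluster with the correct ordering, and pinning down exactly which $\tau$ governs $\lambda_k(\eps)$ for each sign of $\eps$. I would handle it with Kato's theorem on analytic self-adjoint families, obtaining analytic branches $\lambda_i(\eps)$ with derivatives $-2\lambda\tau_i$ at $\eps=0$. This is the classical Hadamard-type formula, obtained by differentiating $\int|\nabla\phi_i(\eps)|^2=\lambda_i(\eps)\int\phi_i(\eps)^2e^{2u+2\eps v}$.

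A min--max argument with test spaces spanned by the lower eigenfunctions and the appropriate eigenvectors of $M[v]$ would also suffice, because only the inequalities $\tau_r\le0$ and $\tau_{m-r+1}\ge0$ are needed.
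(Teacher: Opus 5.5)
Your proof is correct. Steps 1 and 2 match the paper's first-variation computation: the paper writes the branch derivatives as the eigenvalues of $A[v]=-2\lambda M[v]$, which are exactly your $-2\lambda\tau_i$. The two arguments part ways in the final step.

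\textbf{The paper's route.} The paper takes a single bump $v_0\in C_c^\infty(V)$ with $v_0\ge0$ and $v_0>0$ on a smaller ball $V'\Subset V$. Because $v_0\ge0$, the matrix $M[v_0]$ is positive semidefinite. The one-sided condition from $\eps\downarrow0$ (your $\tau_r\le0$) then forces $\tau_r=0$, so $M[v_0]$ is singular. Any $0\ne c\in\ker M[v_0]$ gives $\int v_0\psi^2e^{2u}\,dA_0=0$, i.e.\ $\psi\equiv0$ on $V'$.

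\textbf{Your route.} You let $v$ range over all signed test functions. You observe that the linear space $\{M[v]\}$ misses the open cone of positive definite matrices. Finite-dimensional separation then produces a nonzero positive semidefinite $A$ with $\operatorname{tr}(AM[v])=0$ for every $v$, and this gives a combination $\psi$ vanishing on all of $V$.

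\textbf{Comparison.} Your argument buys $V'=V$ and a cleaner structural statement: a single semidefinite $A$ annihilates the whole image of $v\mapsto M[v]$. The price is a convex-duality step. The paper's argument is shorter and uses only nonnegative perturbations in the $\eps\ge0$ direction. Its smaller ball costs nothing downstream, since Aronszajn's theorem needs only a nonempty open set.

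Your Step 2 already contains the paper's argument: restricted to $v\ge0$, ``$M[v]$ is never positive definite'' together with semidefiniteness is exactly singularity. Your second sign condition $\tau_{m-r+1}\ge0$ is never used. It is equivalent to the first one under $v\mapsto -v$, because $M[-v]=-M[v]$.
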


\begin{proof}
Fix $v\in C_c^\infty(V)$, $v\ge0$. By Lemma~\ref{lem:twosided},
$u_\varepsilon=u+\varepsilon v$ is admissible for $|\varepsilon|\le\varepsilon_0$.
The substitution $w=e^{u_\varepsilon}\phi$ transforms the weighted eigenvalue
problem into the self-adjoint holomorphic family
\[
T_\varepsilon=e^{-u_\varepsilon}\Delta_0e^{-u_\varepsilon}
\]
on $L^2(\dd)$ with fixed form domain. Let $\lambda=\lambda_k(g_\star)>0$ have
multiplicity $m$, and let $w^{(i)}=e^u\phi^{(i)}$ be an orthonormal basis of its
eigenspace. By Rellich--Kato perturbation theory
\cite[Chapters~II, VII]{Kato}, the first derivatives of the $m$ eigenvalue
branches issuing from $\lambda$ are the eigenvalues of the compression of
$\dot T_0$ to this eigenspace.

Differentiating gives
\[
\dot T_0w=-vT_0w-e^{-u}\Delta_0(ve^{-u}w),
\]
and hence
\begin{equation}\label{eq:firstvar}
\bigl\langle\dot T_0w^{(i)},w^{(j)}\bigr\rangle_{L^2(\dd)}
=
-2\lambda\int_{\Sph^2}v\,\phi^{(i)}\phi^{(j)}e^{2u}\dd
=:A[v]_{ij}.
\end{equation}
Thus
\[
A[v]=-2\lambda M[v],
\qquad
M[v]_{ij}:=
\int_{\Sph^2}v\,\phi^{(i)}\phi^{(j)}e^{2u}\dd,
\]
and $M[v]$ is positive semidefinite.

Write
\[
\lambda_{k-r}<\lambda=\lambda_{k-r+1}=\cdots=\lambda_{k-r+m}
<\lambda_{k-r+m+1},
\]
with the obvious modifications at the ends of the spectrum. 
By continuity, for sufficiently small $\varepsilon$, the $m$ branches issuing from
$\lambda$ occupy the indices
$k-r+1,\ldots,k-r+m$. Writing
$\Lambda_j(\varepsilon)
=
\lambda+\varepsilon a_j+O(\varepsilon^2),$
where $a_1,\ldots,a_m$ are the eigenvalues of $A[v]$, and letting
$a_{(1)}\le\cdots\le a_{(m)}$
denote their increasing rearrangement, we have
\[
\lambda_k(u_\varepsilon)
=
\lambda+\varepsilon a_{(r)}+o(\varepsilon)
\qquad (\varepsilon\downarrow0).
\]
Hence the right derivative of $\lambda_k(u_\varepsilon)$ at $0$ is
$a_{(r)}$.

Since
$g_\star$ minimizes $\lambda_k$,
\[
\lambda_k(u_\varepsilon)\ge\lambda_k(u)
\qquad(\varepsilon\ge0),
\]
and this derivative is therefore nonnegative. Since $A[v]$ is negative
semidefinite, its $r$-th smallest eigenvalue is zero. If
\[
\mu_1\ge\cdots\ge\mu_m\ge0
\]
are the eigenvalues of $M[v]$, then
\[
-2\lambda\mu_1\le\cdots\le-2\lambda\mu_m
\]
are those of $A[v]$. Hence $\mu_r=0$ and
\begin{equation}\label{eq:rankbound}
\rank M[v]\le r-1<m.
\end{equation}

Choose a ball $V'\Subset V$ and
$v_0\in C_c^\infty(V)$ with $v_0\ge0$ and $v_0>0$ on $V'$. By
\eqref{eq:rankbound}, $M[v_0]$ is singular. Choose
$0\ne c\in\ker M[v_0]$ and set
$\psi=\sum_i c_i\phi^{(i)}$. Then
\[
0=c^{\!\top}M[v_0]c
=\int_{\Sph^2}v_0\,\psi^2e^{2u}\dd.
\]
Since $v_0>0$ and $e^{2u}>0$ on $V'$, we conclude that
$\psi\equiv0$ on $V'$.
\end{proof}
\begin{proof}[Proof of Theorem~\ref{thm:intro-smooth}]
Suppose, for contradiction, that $K_\star\not\equiv1$. By Lemmas~\ref{lem:slack}
and~\ref{lem:relation}, there exist a ball $V'\subset\Sph^2$ and
$0\ne c\in\R^m$ such that
\[
\psi:=\sum_{i=1}^m c_i\phi^{(i)}\equiv0\quad\text{on }V'.
\]
Since each $\phi^{(i)}$ satisfies
\[
\Delta_0\phi^{(i)}=\lambda e^{2u}\phi^{(i)},
\]
we have
\begin{equation}\label{eq:psieq}
\Delta_0\psi=\lambda e^{2u}\psi
\quad\text{on }\Sph^2.
\end{equation}
By Aronszajn's strong unique continuation theorem \cite{Aronszajn}, the
vanishing of $\psi$ on the nonempty open set $V'$ implies
$\psi\equiv0$ on the connected sphere. But the $\phi^{(i)}$ are
$L^2(e^{2u}\dd)$-orthonormal, so
\[
0=\int_{\Sph^2}\psi^2e^{2u}\dd
=\sum_{i=1}^m c_i^2
=|c|^2,
\]
contradicting $c\ne0$. Hence $K_\star\equiv1$. A smooth metric on
$\Sph^2$ with $K\equiv1$ is isometric to $(\Sph^2,\round)$, and therefore
\[
\inf_{\Cc}\lambda_k
=
\lambda_k(g_\star)
=
\lambda_k(\Sph^2).
\]
\end{proof}

\section{Spectral rigidity in the completion}\label{sec:completion-rigidity}

\begin{proposition}\label{prop:weyl}
For $g\in\overline\Cc$, the eigenvalue counting function
$N_g(x)=\#\{k\ge1:\lambda_k(g)\le x\}$ satisfies
\begin{equation}\label{eq:weyl}
N_g(x)=\frac{\Area(g)}{4\pi}\,x+o(x)
\qquad(x\to\infty).
\end{equation}
\end{proposition}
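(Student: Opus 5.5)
The plan is to prove the Weyl law for $g=e^{2u}\round\in\overline\Cc$ by Dirichlet--Neumann bracketing in the conformal picture of Proposition~\ref{prop:conformal-class}. There the Dirichlet energy is the round one, $\int|\nabla_0\phi|^2\dd$, and only the mass carries the weight $e^{2u}\dd$.

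First I will check that the weight is harmless. By Reshetnyak's uniformization, $\Delta_0 u=\omega-\dd$ is a signed measure whose atoms are all of mass $<2\pi$ (cone angles are positive). Near an atom of mass $\beta=2\pi-\theta$, $u$ behaves like $-\tfrac{\beta}{2\pi}\log d_0(\cdot,p)$ plus a locally bounded-below term, so $e^{2u}\sim d_0^{-\beta/\pi}$ with $\beta/\pi<2$. Hence $e^{2u}\in L^1(\dd)$, which also follows from $\Area(g)\le4\pi$, and $e^{2u}\in L^q$ for some $q>1$ near each atom. Away from the finitely many atoms of mass above a small threshold $\eta$, the potential $u$ is bounded above and in fact $e^{2u}\in L^{q}$ with $q$ close to $\infty$ as $\eta\to0$, by Brezis--Merle type estimates. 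I must also make sure the Cheeger Laplacian on $\overline\Cc$ is exactly the form $\int|\nabla_0\phi|^2\dd$ on $L^2(e^{2u}\dd)$. This follows from conformal invariance of the energy on the regular part, together with the statement that singular points have zero capacity, so $H^1$ is unchanged.

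Next comes the bracketing. Fix $\eps>0$ and partition $\Sph^2$ into finitely many small geodesic squares $Q_i$ of the round metric, of side $h$, chosen so that the atoms of mass $\ge\eta$ lie in squares of total $e^{2u}$-mass $<\eps$. Since Dirichlet decoupling raises eigenvalues and Neumann decoupling lowers them,
\[
\sum_i N^D_{Q_i}(x)\le N_g(x)+1\le \sum_i N^N_{Q_i}(x).
\]
On each square I will compare with a constant weight. On a good square $e^{2u}$ is nearly constant in an $L^q$-averaged sense; approximating $e^{2u}$ in $L^q$ by step functions that are constant on squares, the form-perturbation (Birman--Solomyak) bound for weighted problems $\Delta_0\phi=\la\rho\phi$ on a square gives
\[
N^{D/N}_{Q}(x)=\frac{x}{4\pi}\int_Q\rho\,\dd+o(x)
\]
uniformly, with an error controlled by $\|\rho-\bar\rho\|_{L^q(Q)}$. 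On a bad square containing a strong atom, the Cwikel--Lieb--Rozenblum/Birman--Solomyak bound $N_Q(x)\le C x\|\rho\|_{L^q(Q)}|Q|^{1-1/q}+1$ gives a contribution of order $x\eps$. Summing, letting $x\to\infty$, then letting $h\to0$ and $\eps\to0$, yields $\lim N_g(x)/x=\frac{1}{4\pi}\int e^{2u}\dd=\Area(g)/4\pi$.

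The main obstacle is the Birman--Solomyak step for a singular weight in two dimensions. The critical exponent there is $L\log L$, so I will invoke the two-dimensional Solomyak estimate
\[
N(x)\le C\,x\,\|\rho\|_{L\log L,\text{Orlicz}}+C,
\]
which $e^{2u}$ satisfies since $e^{2u}\in L^q$, $q>1$, locally. If that proves delicate, a fallback is to approximate $g$ by the smooth metrics $e^{2u_t}\round$ of Lemma~\ref{lem:dense}. For these the Weyl law is classical with area $\Area(e^{2u_t}\round)\to\Area(g)$. The remainder would then be handled by a monotonicity comparison of the weights $e^{2u_t}$ and $e^{2u}$, using the same Solomyak bound for the difference $|e^{2u}-e^{2u_t}|\to0$ in $L\log L$.
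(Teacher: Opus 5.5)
\textbf{The gap.} The missing step is the one you flag yourself: that the Cheeger Laplacian of $(X,d,\mathcal H^2)$ is the operator of the form $\int|\nabla_0\phi|^2\dd$ on $L^2(e^{2u}\dd)$. Your bracketing argument is a sound, classical route (Birman--Solomyak) to Weyl's law for the weighted problem $\Delta_0\phi=\la e^{2u}\phi$ with $e^{2u}\in L^q$, $q>1$. But it says something about $N_g$ only once that identification is in place, and your justification for it does not work for a general member of $\overline\Cc$.

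Nothing forces the singular set to be a finite, or polar, set of points surrounding an open smooth part. Take the double of a smooth strictly convex spherical domain, which lies in $\overline\Cc$. Its curvature measure charges the seam with density $2k_g\,ds$. The seam is a curve of positive capacity across which the metric is not smooth. Cone points can even be dense, so that no open set carries a smooth metric. So ``conformal invariance on the regular part plus zero capacity of singular points'' proves nothing in general. Both your main argument and your fallback depend on this identification, since the fallback compares the weights $e^{2u_t}$ and $e^{2u}$ inside the conformal picture.

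There is also a smaller slip: $u$ is not bounded above away from the strong atoms, because weak atoms may accumulate. You only use the Brezis--Merle $L^q$ bound, which is correct. Since every atom of $\omega$ has mass $<2\pi$, that bound even holds on all of $\Sph^2$ for some $q>1$, so the good/bad split of squares is unnecessary.

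\textbf{A repair.} The identification follows from tools already in the paper, without any fine structure of $X$. By Lemma~\ref{lem:dense} and Proposition~\ref{prop:discrete}, $\la_k(g)=\lim_{t\to0}\la_k(e^{2u_t}\round)$. By Proposition~\ref{prop:conformal-class}, the right-hand side is the $k$-th eigenvalue of $\Delta_0\phi=\la e^{2u_t}\phi$.

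Now $u$ is bounded below, since up to a constant it is the Green potential of $\omega\ge0$. Together with $e^{2u_t}\le P_t(e^{2u})$ and $e^{2u}\in L^q$, this gives $e^{2u_t}\to e^{2u}$ in $L^q$ with a uniform positive lower bound. The embedding $H^1(\Sph^2)\hookrightarrow L^{2q'}$ then makes each weighted eigenvalue continuous in the weight. Hence $\la_k(g)$ is the $k$-th eigenvalue of $\Delta_0\phi=\la e^{2u}\phi$ for every $k$. Reshetnyak's identity $\Area(g)=\int e^{2u}\dd$ then completes your argument.

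\textbf{Comparison with the paper.} The paper avoids the conformal picture entirely. It notes that $(X,d,\mathcal H^2)$ is Ahlfors $2$-regular (Bishop above, Bishop--Gromov below). It then applies the Weyl law for compact $\mathrm{RCD}$ spaces of Ambrosio--Honda--Tewodrose \cite[Corollary~4.8]{AHT}, which gives the constant $\omega_2(2\pi)^{-2}\mathcal H^2(X)=\Area(g)/4\pi$ at once. That proof is two lines but rests on heat-kernel asymptotics on $\mathrm{RCD}$ spaces. Yours, once repaired, is hands-on spectral analysis that works for any $L^q$ weight. However, it needs the conformal identification of the Cheeger Laplacian, which the paper never uses for singular members.
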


\begin{proof}
Every $g\in\overline\Cc$, with its area measure
$\mathfrak m=\mathcal H^2$, is a compact two-dimensional Alexandrov space
of curvature $\ge1$, hence Ahlfors $2$-regular. Indeed, the upper bound
\[
\mathcal H^2(B_r(x))\le\pi r^2
\]
follows from Bishop's inequality, while the corresponding lower bound
follows from Bishop--Gromov. Thus the hypotheses of
\cite[Corollary~4.8]{AHT} apply and give
\[
\lim_{x\to\infty}\frac{N_g(x)}{x}
=
\frac{\omega_2}{(2\pi)^2}\mathcal H^2(X)
=
\frac{\Area(g)}{4\pi},
\]
which is \eqref{eq:weyl}.
\end{proof}

\begin{proof}[Proof of Proposition~\ref{prop:rigidity}]
Since $\lambda_{\ell^2}(g)=\ell(\ell+1)$ and the eigenvalues are
nondecreasing,
\[
N_g(\ell(\ell+1))\ge\ell^2.
\]
Moreover,
\[
\lambda_{(\ell+1)^2}(g)=(\ell+1)(\ell+2)>\ell(\ell+1),
\]
so
\[
N_g(\ell(\ell+1))\le(\ell+1)^2-1.
\]
Hence
\begin{equation}\label{eq:squeeze}
\ell^2\le N_g\bigl(\ell(\ell+1)\bigr)
\le(\ell+1)^2-1.
\end{equation}
Dividing by $\ell(\ell+1)$ and letting $\ell\to\infty$ gives
\[
\frac{N_g(\ell(\ell+1))}{\ell(\ell+1)}\longrightarrow1.
\]
Comparing with \eqref{eq:weyl} yields
\[
\Area(g)=4\pi.
\]

Let $(X,d)$ be the Alexandrov surface underlying $g$. Since
$(X,d,\mathcal H^2)$ is $\mathrm{CD}(1,2)$ by
Proposition~\ref{prop:rcd}, Bishop--Gromov gives, for every $p\in X$,
\[
\mathcal V_p(r):=
\frac{\mathcal H^2(B_r(p))}
{2\pi(1-\cos r)}
\]
nonincreasing on $(0,\pi]$. At every point, the tangent cone is a Euclidean
cone of total angle $\theta(p)\le2\pi$, and hence
\[
\lim_{r\to0^+}\mathcal V_p(r)
=
\frac{\theta(p)}{2\pi}\le1.
\]
If $\diam(X)<\pi$, choose $r\in(\diam(X),\pi)$. Then $B_r(p)=X$, and therefore
\[
\mathcal V_p(r)
=
\frac{4\pi}{2\pi(1-\cos r)}
>1,
\]
contradicting the monotonicity of $\mathcal V_p$. Thus
\[
\diam(X)=\pi.
\]

By maximal-diameter rigidity for Alexandrov spaces of curvature $\ge1$
\cite[Theorem~1.8]{ZhangZhu}, $X$ is a spherical suspension over a compact
one-dimensional Alexandrov space of curvature $\ge1$. The cross-section
cannot be a segment since $X$ is a closed surface, so it is a circle
$S^1(L)$ with $L\le2\pi$. Away from the poles the suspension has metric
\[
dt^2+\sin^2t\,d\vartheta^2,
\qquad t\in(0,\pi),\quad \vartheta\in\R/L\Z,
\]
and hence
\[
\Area(g)=\mathcal H^2(X)
=\int_0^\pi L\sin t\,dt
=2L.
\]
Since $\Area(g)=4\pi$, we obtain $L=2\pi$, and therefore $X$ is isometric
to the round unit sphere. Since the reference measure of $g$ is
$\mathcal H^2$, $g$ is the round unit sphere.
\end{proof}

\begin{remark}\label{rem:rigidity-extras}
The suspension over $S^1(2\pi\alpha)$ is the spindle
$\Sph^2_\alpha$ of \eqref{eq:spindle}, with area $4\pi\alpha$; thus
$L=2\pi$ distinguishes the round sphere within this family. Proposition
\ref{prop:rigidity} requires equality at every cluster bottom and therefore
does not address minimizers of a single $f_k$. Conjecture~\ref{conjecture}
predicts that a singular such minimizer, if it exists, carries its excess
curvature in $\omega_{\mathrm{sing}}$, which the present argument does not
exclude.
\end{remark}

\section{A sharp Sturm--Liouville comparison theorem}
\label{sec:1d-comparison}

This section studies the problem \eqref{eq:slp}. Both \eqref{eq:concavity} and
$\mathsf L_\varphi$ are unchanged when $\varphi$ is multiplied by a positive constant,
so throughout this section we normalize
\begin{equation}\label{eq:normalization}
\max_{[0,D]}\varphi=1.
\end{equation}

\subsection{The Pr\"ufer phase}\label{subsec:phase}

Fix $\mu>0$ and let $u$ be a nontrivial solution of
\[
(\varphi u')'+\mu\varphi u=0
\]
satisfying the natural condition at $0$. Set $v=\varphi u'$. By
Lemma~\ref{lem:endpoints}, $(u,v)\to(c_0,0)$ as $t\to0^+$ with $c_0\ne0$
when $\varphi(0)=0$, while the same conclusion follows from regularity when
$\varphi(0)>0$. Replacing $u$ by $-u$, assume $c_0>0$. Since
\[
u'=\varphi^{-1}v,\qquad v'=-\mu\varphi u,
\]
the pair $(u,v)$ never vanishes simultaneously. We therefore define its
Pr\"ufer phase $\gamma=\gamma(\,\cdot\,;\mu)$ by
\[
(u,v)=R(\cos\gamma,-\sin\gamma),\qquad
R=(u^2+v^2)^{1/2},\qquad \gamma(0^+)=0.
\]

\begin{lemma}\label{lem:phase-basic}
The phase $\gamma$ is $C^1$ and satisfies
\begin{equation}\label{eq:phase-ode}
\gamma'
=
\mu\varphi\cos^2\gamma+\varphi^{-1}\sin^2\gamma>0.
\end{equation}
Thus $\gamma$ is strictly increasing, and the zeros of $u$ are precisely
the points where $\gamma\in\frac{\pi}{2}+\pi\mathbb Z$. Moreover,
$\gamma(D^-)$ exists and is finite; if $u$ satisfies the natural condition at
$D$, then $\gamma(D^-)\in\pi\mathbb Z$.
\end{lemma}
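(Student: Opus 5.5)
The plan is to differentiate the polar representation directly on $(0,D)$, where $\varphi$ is positive and continuous. There $u$ solves $u'=\varphi^{-1}v$ and $v'=-\mu\varphi u$, so $u,v$ are $C^1$ on $(0,D)$ and $(u,v)$ is never $(0,0)$. Existence and uniqueness then give a continuous lift $\gamma$ of the angle, and it is $C^1$ because $(u,v)$ is. To get \eqref{eq:phase-ode}, write $\tan\gamma=-v/u$, or use the lift formula $\gamma'=(u'(-v)\cdot(-1)\ldots)$. The cleanest form is
\[
\gamma'=\frac{-v'u+u'v}{u^2+v^2}\cdot(\pm)
\]
with the sign fixed by the convention $(u,v)=R(\cos\gamma,-\sin\gamma)$, which gives $\gamma'=\dfrac{u v' \cdot(-1)+v u'}{R^2}$. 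Substituting $u'=\varphi^{-1}v$ and $v'=-\mu\varphi u$ yields $\gamma'=(\mu\varphi u^2+\varphi^{-1}v^2)/R^2$, which is the stated formula. It is strictly positive because $\cos^2\gamma$ and $\sin^2\gamma$ never vanish together and $\varphi>0$. Strict monotonicity follows at once, and since $u=R\cos\gamma$ with $R>0$, the zeros of $u$ are exactly the points where $\gamma\in\tfrac\pi2+\pi\mathbb Z$. At $t=0^+$, Lemma~\ref{lem:endpoints} gives $(u,v)\to(c_0,0)$ with $c_0>0$, so the angle converges, and the normalization $\gamma(0^+)=0$ is consistent.

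The substantive part is the behaviour at $D$. Since $\gamma$ is increasing, $\gamma(D^-)$ exists in $(-\infty,\infty]$, and the task is to rule out $+\infty$. I would use a zero-counting argument. Each time $\gamma$ crosses a value in $\tfrac\pi2+\pi\mathbb Z$ it does so exactly once, by monotonicity, so an infinite limit forces infinitely many zeros of $u$ accumulating at $D$. Between consecutive zeros $v=\varphi u'$ vanishes somewhere by Rolle, and the crossings of $\pi\mathbb Z$ then interlace as well.

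To exclude accumulation, I would invoke the endpoint analysis mirrored at $D$, namely Lemma~\ref{lem:endpoints} applied to $\varphi(D-\cdot)$, which is again admissible. When $\varphi(D)=0$, the equation is limit-circle/regular-singular at $D$, since concavity makes $\varphi$ vanish at most linearly. Every solution then has a finite limit for $(u,v)$, with $(u,v)\to(c,0)$ on the natural branch or $u\sim \log$ on the other branch. In either case $u$ is eventually of one sign, because the comparison $\gamma'\le\mu\varphi+\varphi^{-1}$ is integrable only when $\int\varphi^{-1}<\infty$.

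I expect this step to be the main obstacle, because a linearly vanishing $\varphi$ makes $\int^D\varphi^{-1}$ diverge. I would therefore argue directly instead. Near $D$, if $\varphi(D)=0$, concavity gives $\varphi(t)\ge c(D-t)$ and $|\varphi'|$ bounded. A Sturm comparison with the Euler equation $((D-t)w')'+\mu(D-t)w=0$, which is a Bessel $J_0$ equation with zeros that do not accumulate at $D$, shows that $u$ has finitely many zeros near $D$. When $\varphi(D)>0$ the problem is regular at $D$, so $u$ and $v$ extend continuously and finiteness is immediate.

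Finally, suppose $u$ satisfies the natural condition at $D$. Then $v=\varphi u'\to0$, while $u$ has a nonzero limit by the mirrored Lemma~\ref{lem:endpoints}, or by regularity. Hence $\sin\gamma(D^-)=0$, that is, $\gamma(D^-)\in\pi\mathbb Z$.
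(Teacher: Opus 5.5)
Your derivation of $\gamma'$, the positivity and monotonicity, the zero characterization, and the final step ($v\to0$, $u\to c\neq0$, hence $\gamma(D^-)\in\pi\mathbb Z$) all match the paper. The gap is in the one substantive point: ruling out $\gamma(D^-)=+\infty$ when $\varphi(D)=0$. Neither of the two mechanisms you give works in general.

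\begin{itemize}
\item The bound $\gamma'\le\mu\varphi+\varphi^{-1}$ is integrable only when $\int^D\varphi^{-1}<\infty$, as you yourself note.
\item Your fallback relies on ``concavity gives $|\varphi'|$ bounded'', which is false. The density $\varphi=\sqrt{t(D-t)}$ with $D\le2$ satisfies $\varphi''+\varphi\le0$ (one computes $\varphi''=-\tfrac{D^2}{4}(t(D-t))^{-3/2}$), yet $\varphi'(D^-)=-\infty$.
\item Sturm's comparison theorem needs $p_2\le\varphi$ and $q_2\ge\mu\varphi$ near $D$. If $p_2$ and $q_2$ are both multiples of $D-t$, this requires a two-sided bound $c(D-t)\le\varphi\le C(D-t)$, and the upper half is exactly the bounded-slope claim. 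With the literal constants in $((D-t)w')'+\mu(D-t)w=0$ it would even force $\varphi\equiv D-t$.
\end{itemize}

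The rescaled entropy $\varphi(t)=h(t/D)$, $h(x)=-x\log x-(1-x)\log(1-x)$, $D\le2$, is admissible. It has both $\varphi'(D^-)=-\infty$ and $\int^D\varphi^{-1}\,dt=\infty$, so it escapes both arguments. Your intermediate claim that $(u,v)$ always has a finite limit is also inconsistent with the $\log$ branch you mention in the same sentence.

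The missing ingredient is already in Lemma~\ref{lem:endpoints}, which you cite but do not use for this purpose. Applied to $\varphi(D-\cdot)$, it says $D$ is limit-circle \emph{non-oscillatory}: every solution has only finitely many zeros near $D$. That is the paper's one-line argument.

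You can also see it directly from Steps 2--3 of that lemma, and this is the correct reason behind your ``eventually of one sign''. The limit $L=\lim_{t\to D^-}\varphi u'$ exists. If $L\ne0$, then $u'$ has a fixed sign near $D$, so $u$ is monotone there. If $L=0$, then $u$ has a finite nonzero limit. Either way $u$ has finitely many zeros near $D$.

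If you want to keep a Bessel comparison, take $p_2=c(D-t)\le\varphi$ and $q_2=\mu\ge\mu\varphi$, using $\max\varphi=1$. In $s=D-t$ this becomes $sw''+w'+(\mu/c)w=0$, solved by $J_0$ and $Y_0$ of $2\sqrt{\mu s/c}$. These do not oscillate as $s\to0$, and the comparison needs only the linear lower bound on $\varphi$.
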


\begin{proof}
Differentiating $\tan\gamma=-v/u$ and using the first-order system gives
\eqref{eq:phase-ode}. The positivity follows from $\varphi>0$ and the fact
that $u$ and $v$ cannot vanish simultaneously. Hence $\gamma$ is increasing,
and $u=R\cos\gamma$ gives the zero characterization. If
$\gamma(D^-)=\infty$, then $u$ has infinitely many zeros near $D$, contrary
to the non-oscillation of the endpoint. Finally, under the natural condition
at $D$, Lemma~\ref{lem:endpoints} gives $u(D^-)\ne0$ and
$v(D^-)=0$, so $\gamma(D^-)\in\pi\mathbb Z$.
\end{proof}

\begin{lemma}\label{lem:phase-eigen}
Let $j\ge1$ and let $\gamma$ be the phase of the $j$-th eigenfunction
$u_j$, with $\mu=\mu_j(\varphi)$. Then
\[
\gamma(0^+)=0,\qquad \gamma(D^-)=j\pi.
\]
\end{lemma}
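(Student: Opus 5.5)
Since $\gamma(0^+)=0$ holds by the normalization fixed when the phase was defined, only $\gamma(D^-)=j\pi$ needs proof. Lemma~\ref{lem:phase-basic} already gives $\gamma(D^-)\in\pi\mathbb Z$, because $u_j$ satisfies the natural condition at $D$. Since $\gamma$ is strictly increasing from $0$, we have $\gamma(D^-)=p\pi$ for some integer $p\ge0$, and in fact $p\ge1$ once we know $u_j$ is nonconstant (for $\mu>0$ the strict monotonicity gives $\gamma(D^-)>0$). By the zero characterization in Lemma~\ref{lem:phase-basic}, the zeros of $u_j$ in $(0,D)$ are exactly the points where $\gamma$ crosses $\tfrac\pi2+\pi\mathbb Z$. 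With $\gamma$ increasing continuously from $0$ to $p\pi$, it crosses $\tfrac\pi2,\tfrac{3\pi}2,\dots,(p-\tfrac12)\pi$ exactly once each, so $u_j$ has exactly $p$ zeros in $(0,D)$. It therefore suffices to prove the Sturm oscillation statement that the $j$-th eigenfunction has exactly $j$ interior zeros.

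To prove this, I use the phase as a function of $\mu$. Write $\Gamma(\mu)=\gamma(D^-;\mu)$ for the terminal phase of the solution normalized at $0$. Eigenvalues are precisely the $\mu>0$ with $\Gamma(\mu)\in\pi\mathbb Z$; here I also count $\mu_0=0$, where the solution is constant and the phase stays at $0$. The plan has three steps.

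\emph{Monotonicity.} For $0\le\mu<\tilde\mu$, a Pr\"ufer comparison in \eqref{eq:phase-ode} shows $\gamma(t;\mu)\le\gamma(t;\tilde\mu)$ for all $t$. The right-hand side of \eqref{eq:phase-ode} is increasing in $\mu$, both phases start at $0$, and whenever $\gamma\equiv\tfrac\pi2 \pmod\pi$ the larger parameter gives a strictly faster crossing. From this I deduce that $\Gamma$ is strictly increasing.

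\emph{Continuity.} $\Gamma$ is continuous in $\mu$. This follows from continuous dependence of the normalized solution on $\mu$, which holds locally uniformly on compact subsets of $(0,D)$, combined with the endpoint control of Lemma~\ref{lem:endpoints}.

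\emph{Normalization.} $\Gamma(0^+)=0$.

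Given these, $\Gamma$ passes through $\pi,2\pi,\dots$ in order. The eigenvalues in increasing order are then exactly the solutions of $\Gamma=\pi,2\pi,\dots$, which gives $\Gamma(\mu_j)=j\pi$. Simplicity, from Theorem~\ref{thm:slp_discrete}, guarantees that each eigenvalue is counted once.

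The main obstacle is the behaviour at the singular endpoints, where $\varphi$ may vanish. For strict monotonicity I would instead use the Wronskian/Picone identity
\[
\bigl[\varphi(u\tilde u'-\tilde u u')\bigr]'=(\mu-\tilde\mu)\varphi u\tilde u.
\]
Integrating it from $0$ and using $\varphi u'\to0$, $\varphi\tilde u'\to0$ at $0^+$ (Lemma~\ref{lem:endpoints}) gives the classical Sturm comparison: between consecutive zeros of $u$, or between $0$ and the first zero, the solution $\tilde u$ vanishes. Hence the zero count is nondecreasing in $\mu$ and increases when crossing an eigenvalue. Continuity of $\Gamma$ at $D$ is the delicate point. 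There I would use that near $D$ the phase equation is non-oscillatory uniformly for $\mu$ in compact sets, by the same argument used to show $\gamma(D^-)<\infty$, so the terminal phase cannot jump by a multiple of $\pi$.

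As a fallback, one can avoid continuity of $\Gamma$ entirely by a min--max count. Eigenfunctions with $p$ interior zeros give $p+1$ nodal pieces. Restricting $u_j$ to these pieces and using them as test functions in the min--max over $(p+1)$-dimensional spaces yields $\mu_j\ge$ the $p$-th eigenvalue of \eqref{eq:slp}, that is $j\ge p$. The reverse inequality $p\ge j$ comes from Sturm comparison with the eigenfunctions of lower index, which have strictly fewer zeros by induction.
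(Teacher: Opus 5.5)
Your first paragraph is exactly the paper's proof: $\gamma(D^-)=p\pi$, where $p$ is the number of interior zeros of $u_j$. The paper then simply uses that $u_j$ has exactly $j$ zeros, which is part of the oscillation theorem already quoted in the proof of Theorem~\ref{thm:slp_discrete}.

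\textbf{The main route does not work.} Your derivation of that count via $\Gamma(\mu)$ fails in precisely the case that matters. Suppose $\varphi(D)=0$ and $\int^D\varphi^{-1}\,dt=\infty$. This is automatic for concave $\varphi$ with finite $\varphi'(D^-)$, e.g.\ $\varphi=\sin$.
\begin{itemize}
\item By Lemma~\ref{lem:endpoints}, a solution that is not natural at $D$ has $\varphi u'\to L\ne0$. Hence $|u|\to\infty$ while $v$ stays bounded, so $\tan\gamma=-v/u\to0$.
\item Thus $\Gamma(\mu)\in\pi\mathbb Z$ for \emph{every} $\mu$, and $\Gamma$ is a step function.
\item For the Legendre model, $P_\nu(\cos t)$ with $0<\nu<1$ has one zero and a logarithmic singularity at $t=\pi$. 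So $\Gamma\equiv\pi$ on $(0,2]$, and likewise $\Gamma\equiv j\pi$ on $(j(j-1),j(j+1)]$.
\end{itemize}
Consequently the characterization ``eigenvalue $\iff\Gamma(\mu)\in\pi\mathbb Z$'' is false, as are continuity, strict monotonicity, and $\Gamma(0^+)=0$. The repair you suggest cannot work either. The jump of $\Gamma$ at each eigenvalue is genuine: a new zero enters from $D$, and uniform non-oscillation near $D$ (at most one zero there) does not forbid this. A Pr\"ufer argument at an LCNO endpoint needs an angle adapted to principal and non-principal solutions.

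\textbf{The fallback is sound.} It gives a self-contained proof once the induction is stated correctly.
\begin{itemize}
\item Set $z_0=0$ and $z_{p+1}=D$. The nodal pieces $u_j\mathbf{1}_{[z_i,z_{i+1}]}$ lie in $H^1(\varphi)$ and are mutually orthogonal for both $\mathcal E$ and $L^2(\varphi\,dt)$.
\item Each piece has Rayleigh quotient $\mu_j$. The boundary terms vanish at interior zeros, and also at $0$ and $D$ by Lemma~\ref{lem:endpoints}, since $u_j$ has a finite limit there and $\varphi u_j'\to0$.
\item Min--max and simplicity then give $p\le j$.
\item For $p\ge j$, phrase the induction as ``$u_{j-1}$ has at least $j-1$ zeros'', not ``strictly fewer zeros''. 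Then $u_{j-1}$ has at least $j$ nodal intervals, counting the two end intervals.
\item Your Wronskian identity forces $u_j$ to vanish inside each of these intervals. At $0$ and $D$ one has $W\to0$, because both eigenfunctions are bounded and natural there.
\end{itemize}
This reproves what the paper takes from Zettl, at the cost of some standard Sturm theory.
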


\begin{proof}
By Lemma~\ref{lem:phase-basic}, $\gamma(D^-)=m\pi$ for some $m\ge0$.
Since $\gamma$ is strictly increasing, it crosses
$\frac{\pi}{2}+\pi\mathbb Z$ exactly $m$ times. These crossings are precisely
the zeros of $u_j$, so $m=j$.
\end{proof}

We write $\bar\gamma(\theta;\mu)$ for the phase of the natural-at-$0$
solution of the Legendre equation with density $\sin\theta$, normalized by
$\bar\gamma(0^+;\mu)=0$.

\begin{lemma}\label{lem:phase-asymptotic}
Suppose $\varphi(0)=0$ and fix $\mu>0$. With
\[
\Phi(t):=\int_0^t\varphi(s)\,ds,
\]
the phase satisfies
\[
\gamma(t)=\mu\Phi(t)(1+o(1))
\qquad\text{as }t\to0^+.
\]
In particular, for the model density $\sin\theta$,
\begin{equation}\label{eq:model-asymptotic}
\bar\gamma(\theta;\mu)
=
\mu(1-\cos\theta)(1+o(1))
=
\frac{\mu}{2}\theta^2(1+o(1)).
\end{equation}
\end{lemma}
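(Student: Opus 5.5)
The plan is to work directly with the Prüfer system and compare both $u$ and $v=\varphi u'$ with their initial values. The starting point is Lemma~\ref{lem:endpoints}: as $t\to0^+$ we have $u\to c_0>0$ and $v\to0$. Since $\gamma(0^+)=0$, for small $t$ the phase lies in $(-\pi/2,\pi/2)$, so $\tan\gamma=-v/u$.

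\emph{First step: control $v$.} Integrate $v'=-\mu\varphi u$ from $0$, using $v(0^+)=0$:
\[
v(t)=-\mu\int_0^t\varphi(s)u(s)\,ds=-\mu\bigl(c_0+o(1)\bigr)\Phi(t).
\]
This uses only the continuity of $u$ at $0$. Dividing by $u(t)=c_0+o(1)$ then gives
\[
\tan\gamma(t)=-\frac{v}{u}=\mu\Phi(t)(1+o(1)).
\]

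\emph{Second step: pass from $\tan\gamma$ to $\gamma$.} Since $\Phi(t)\to0$, we get $\tan\gamma\to0$, hence $\gamma\to0$. Then $\gamma=\arctan(\tan\gamma)=\tan\gamma\,(1+o(1))$, which is the claim.

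An alternative is to integrate the phase equation \eqref{eq:phase-ode}. The term $\mu\varphi\cos^2\gamma$ contributes $\mu\Phi(1+o(1))$. The term $\varphi^{-1}\sin^2\gamma$ is of size $\varphi^{-1}\gamma^2\approx\varphi^{-1}\mu^2\Phi^2$, which is $\lesssim\Phi\cdot(\Phi/\varphi)$. It is negligible only if $\Phi/\varphi\to0$, and that holds for concave $\varphi$ with $\varphi(0)=0$, since $\Phi(t)\le t\varphi(t)$. The $\tan$ route avoids this estimate, so it is the one to take. The only delicate point is making sure Lemma~\ref{lem:endpoints} really gives $v\to0$ and $u\to c_0\ne0$. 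This is the natural condition together with limit-circle non-oscillation, and I take it as given.

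\emph{Model case.} For $\varphi=\sin$ we have $\Phi(\theta)=1-\cos\theta=\tfrac12\theta^2(1+o(1))$, which yields \eqref{eq:model-asymptotic}.
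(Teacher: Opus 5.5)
Your proposal is correct and matches the paper's proof essentially step for step. Both integrate $v'=-\mu\varphi u$ from $0$ using the natural condition $v(0^+)=0$, use $u\to c_0>0$ from Lemma~\ref{lem:endpoints} to get $\tan\gamma=-v/u=\mu\Phi(1+o(1))$, and then invert the tangent because $\gamma\to0$, with the model case read off from $\Phi(\theta)=1-\cos\theta$.
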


\begin{proof}
The natural condition at $0$ gives
\[
v(t)=-\mu\int_0^t\varphi(s)u(s)\,ds.
\]
Since $u(t)\to c_0>0$ by Lemma~\ref{lem:endpoints},
\[
v(t)=-\mu c_0\Phi(t)(1+o(1)).
\]
Hence
\[
-\frac{v(t)}{u(t)}
=
\mu\Phi(t)(1+o(1)).
\]
As $\gamma\to0$ and $\tan\gamma=-v/u$, this yields
$\gamma=\mu\Phi(1+o(1))$. For $\varphi(\theta)=\sin\theta$,
$\Phi(\theta)=1-\cos\theta$, giving \eqref{eq:model-asymptotic}.
\end{proof}

\subsection{Reparametrization by the density}\label{subsec:clock}

From here to the end of \S\ref{subsec:sharp} we assume
\eqref{eq:intro-density}. This condition is preserved by the reflection
$\psi(s)=\varphi(D-s)$ and implies strict concavity. Hence $\varphi$
has a unique maximum at some $t_*\in[0,D]$, is strictly increasing on
$[0,t_*]$, and strictly decreasing on $[t_*,D]$.

\begin{lemma}\label{lem:energy}
Let $\varphi'_+$ denote the right derivative of $\varphi$ and set
\[
E:=\varphi^2+(\varphi'_+)^2.
\]
Then
\[
E\ge1\quad\text{on }(0,D),
\]
with $E$ nonincreasing on $(0,t_*)$ and nondecreasing on $(t_*,D)$.
\end{lemma}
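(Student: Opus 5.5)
The plan is to treat $E=\varphi^2+(\varphi'_+)^2$ as an energy for the "oscillator" inequality $\varphi''+\varphi\le0$. The distributional condition says that $\varphi'' \le -\varphi$ as measures. Thus $\varphi'_+$ is a function of locally bounded variation on $(0,D)$, nonincreasing, and its distributional derivative satisfies $d\varphi'_+\le-\varphi\,dt$. For the chain rule I will use the product rule for BV functions, writing $(\varphi'_+)^2$ through the averaged value $\tfrac12(\varphi'_++\varphi'_-)$ at jumps. This gives, as measures on $(0,D)$,
\[
dE \;=\; 2\varphi\varphi'\,dt + 2\,\overline{\varphi'}\,d\varphi'_+ ,
\]
where $\overline{\varphi'}$ is the averaged derivative. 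On the absolutely continuous part this is $2\varphi'(\varphi+\varphi'')\,dt$. On the jump and singular parts it is $\overline{\varphi'}\,d\varphi'_+$ with $d\varphi'_+\le0$.

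\textbf{Monotonicity.} On $(0,t_*)$, $\varphi$ is strictly increasing, so $\varphi'_\pm\ge0$ and $\overline{\varphi'}\ge0$. Since $\varphi+\varphi''\le0$ as a measure and $\varphi'\ge0$, every part of $dE$ is $\le0$, so $E$ is nonincreasing there. On $(t_*,D)$ we have $\varphi'\le0$, and the same computation gives $dE\ge0$. At jump points the direct check is easy: $(\varphi'_+)^2$ jumps from $a^2$ to $b^2$ with $b\le a$, and both have the same sign on each side of $t_*$. To avoid delicate BV chain rules, I would instead mollify $\varphi$. The mollification $\varphi_\varepsilon$ satisfies $\varphi_\varepsilon''+\varphi_\varepsilon\le0$ on compact subintervals, and the smooth identity
\[
E_\varepsilon'=2\varphi_\varepsilon'(\varphi_\varepsilon''+\varphi_\varepsilon)
\]
gives monotonicity on intervals where $\varphi_\varepsilon'$ has a fixed sign, which holds on compact subsets of $(0,t_*)$ for small $\varepsilon$. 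Passing to the limit uses $\varphi'_\varepsilon\to\varphi'_+$ at points of differentiability, with right-continuity supplying the remaining points.

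\textbf{Lower bound $E\ge1$.} By monotonicity, $E$ attains its infimum on $(0,D)$ near $t_*$. If $t_*\in(0,D)$, the infimum equals $\lim_{t\to t_*}E(t)$. By one-sided limits this is at least $\varphi(t_*)^2+\min\bigl((\varphi'_+(t_*))^2,(\varphi'_-(t_*))^2\bigr)\ge\varphi(t_*)^2=1$, using the normalization \eqref{eq:normalization}. Since $E$ decreases toward $t_*$ from the left and increases away from it on the right, $E\ge1$ everywhere.

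\textbf{Main obstacle.} The endpoint case $t_*\in\{0,D\}$ is the hard part; by the reflection $\psi(s)=\varphi(D-s)$ it suffices to treat $t_*=D$. Then $\varphi$ is increasing on $(0,D)$, $E$ is nonincreasing, and I need $\liminf_{t\to D^-}E\ge1$. Here $\varphi(D)=1$ and $\varphi'_+\ge0$, so $E(t)\ge\varphi(t)^2\to1$ by continuity of $\varphi$ up to $D$. This gives $E\ge1$ on all of $(0,D)$ because $E$ decreases toward $D$. I expect the only genuine care to be at the regularity step: justifying the monotonicity of $E$ through the mollification limit, and the one-sided limits at a corner of $\varphi$ at $t_*$.
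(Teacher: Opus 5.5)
Your proposal is correct and follows essentially the paper's proof. The BV Leibniz rule with the averaged derivative gives $dE=2\overline{\varphi'}\,(d\varphi'+\varphi\,dt)$, whose sign is fixed by that of $\overline{\varphi'}$ on each side of $t_*$; nonnegativity of $\overline{\varphi'}$ suffices, so the paper's strict positivity is not needed. Then $E\ge\varphi^2\to1$ as $t\to t_*$, together with monotonicity, gives $E\ge1$. The mollification detour is an optional alternative. The endpoint case $t_*\in\{0,D\}$ needs no reflection, since the same one-line limit argument applies directly; reflection would in any case trade $\varphi'_+$ for $\varphi'_-$.
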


\begin{proof}
On intervals where $\varphi$ is $C^2$,
\[
E'=2\varphi'(\varphi''+\varphi),
\]
so $E'\le0$ where $\varphi'>0$ and $E'\ge0$ where $\varphi'<0$.
For general concave $\varphi$ the same conclusion holds distributionally. Fix a compact
subinterval $[a,b]\subset(0,t_*)$. There $\varphi'$ is bounded and of bounded variation,
and $\varphi'\ge\varphi'(b)>0$: indeed $\varphi'\ge0$ on $(0,t_*)$ because $\varphi$ is
nondecreasing there, and a zero of $\varphi'$ at some $s<t_*$ would force $\varphi'<0$
on $(s,t_*)$ by strict concavity, contradicting that $\varphi$ increases up to $t_*$. By
the Leibniz rule for $\mathrm{BV}$ functions \cite[Thm.~3.99]{AmbrosioFuscoPallara}, the
distributional derivative of $(\varphi')^2$ on $(a,b)$ is $2\bar\varphi'\,d\varphi'$,
where $\bar\varphi'=\tfrac12\bigl(\varphi'(\cdot^+)+\varphi'(\cdot^-)\bigr)$ is the
symmetric average, equal to $\varphi'$ off the countable jump set. Since $\varphi$ is
locally Lipschitz on $(0,D)$, this gives
\[
dE=2\varphi\varphi'\,dt+2\bar\varphi'\,d\varphi'=2\bar\varphi'\,(d\varphi'+\varphi\,dt)\le0
\qquad\text{on }(a,b),
\]
because $\bar\varphi'>0$ and \eqref{eq:intro-density} gives $d\varphi'+\varphi\,dt\le0$.
As $[a,b]$ was arbitrary and $E$ is right-continuous, $E$ is nonincreasing on
$(0,t_*)$. The argument on $(t_*,D)$ is identical with the inequalities reversed. Since
$\varphi(t_*)=1$,
\[
E(t)\ge1
\]
on both sides of $t_*$, and the same holds at $t_*$ directly.
\end{proof}

\begin{lemma}\label{lem:clock}
Define $\tau\colon[0,D]\to[0,\pi]$ by
\[
\tau(t)=
\begin{cases}
\arcsin\varphi(t),&t\in[0,t_*],\\[2pt]
\pi-\arcsin\varphi(t),&t\in[t_*,D].
\end{cases}
\]
Then $\tau$ is absolutely continuous and strictly increasing,
\[
\sin\tau(t)=\varphi(t),
\qquad
\tau'(t)
=
\frac{|\varphi'(t)|}{\sqrt{1-\varphi(t)^2}}
\ge1
\quad\text{a.e.}
\]
Consequently,
\[
D\le\tau(D)-\tau(0)\le\pi.
\]
\end{lemma}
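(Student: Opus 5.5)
The plan is to read each claim of Lemma~\ref{lem:clock} off the monotonicity and energy facts already proved, working separately on $[0,t_*]$ and $[t_*,D]$, and then to glue at $t_*$.

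\emph{Continuity, monotonicity and $\sin\tau=\varphi$.} With the normalization \eqref{eq:normalization}, $\varphi$ takes values in $[0,1]$ and equals $1$ only at $t_*$. On $[0,t_*]$, $\varphi$ is continuous and strictly increasing, so $\tau=\arcsin\varphi$ is continuous and strictly increasing with values in $[0,\pi/2]$. On $[t_*,D]$, $\varphi$ is strictly decreasing, so $\pi-\arcsin\varphi$ is strictly increasing with values in $[\pi/2,\pi]$. Both branches equal $\pi/2$ at $t_*$. The identity $\sin\tau=\varphi$ holds on both branches because $\sin(\pi-x)=\sin x$.

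\emph{Derivative and absolute continuity.} On any compact $[a,b]\subset(0,t_*)$, $\varphi$ is Lipschitz (it is concave) and satisfies $\varphi\le\varphi(b)<1$. Since $\arcsin$ is smooth on $[0,\varphi(b)]$, $\tau$ is Lipschitz there. The chain rule gives, almost everywhere,
\[
\tau'=\frac{\varphi'}{\sqrt{1-\varphi^2}}=\frac{|\varphi'|}{\sqrt{1-\varphi^2}}.
\]
The same computation works on $(t_*,D)$, using $-\varphi'=|\varphi'|$. At almost every point $\varphi'_+=\varphi'$, so Lemma~\ref{lem:energy} gives $(\varphi')^2\ge1-\varphi^2$. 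Hence $\tau'\ge1$ a.e. This is the only place where \eqref{eq:intro-density} is used.

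For global absolute continuity on $[0,D]$, note that $\tau$ is continuous and monotone, and that it is locally absolutely continuous on $(0,t_*)$ and on $(t_*,D)$. The only possible failure of absolute continuity for a monotone continuous function is a singular (Cantor-type) part, and that part would have to be supported in $\{0,t_*,D\}$. A singular continuous measure has no atoms, so it cannot live on a finite set. Therefore $\tau(\beta)-\tau(\alpha)=\int_\alpha^\beta\tau'$ for every $\alpha<\beta$. A concrete way to see this is monotone convergence: $\tau(\beta')-\tau(\alpha')=\int_{\alpha'}^{\beta'}\tau'$ for $[\alpha',\beta']$ inside one of the open pieces, and we let the endpoints tend to $0$, $t_*$ or $D$, using continuity of $\tau$.

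\emph{Conclusion.} Integrating, and using $\tau'\ge1$ a.e.,
\[
D\le\int_0^D\tau'=\tau(D)-\tau(0),
\]
while $\tau([0,D])\subset[0,\pi]$ gives $\tau(D)-\tau(0)\le\pi$. The degenerate cases $t_*\in\{0,D\}$ just drop one branch.

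The main obstacle is the behaviour near $t_*$, where $\sqrt{1-\varphi^2}\to0$ and $\arcsin$ is not Lipschitz. The derivative formula and the bound $\tau'\ge1$ are therefore only available on compact subintervals away from $t_*$. Passing to the full integral identity rests on the monotone-convergence step above, which relies on continuity of $\tau$ at $t_*$ and on the finiteness of the exceptional set.
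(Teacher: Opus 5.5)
Your proposal is correct and follows the paper's proof essentially step for step. The shared steps are: $\tau$ is Lipschitz on compact subintervals away from $t_*$ (where $\varphi$ is bounded below $1$), the chain rule gives the derivative formula, Lemma~\ref{lem:energy} gives $\tau'\ge1$ a.e., and monotone convergence together with the continuity of $\tau$ passes the identity $\tau(\beta)-\tau(\alpha)=\int_\alpha^\beta\tau'$ to the points $0$, $t_*$, $D$.

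Your alternative phrasing, that any singular part of $d\tau$ would be concentrated on the finite set $\{0,t_*,D\}$ and is atomless, hence zero, is a valid restatement of the same step. One small inaccuracy: the condition $\varphi''+\varphi\le0$ is not used only for $\tau'\ge1$. The strict monotonicity of $\varphi$ on either side of $t_*$, which you also rely on, comes from it too.
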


\begin{proof}
Strict concavity gives the stated monotonicity and hence continuity and strict
increase of $\tau$. On a compact subinterval of $(0,D)\setminus\{t_*\}$ one has
$\varphi\le1-\eta$ for some $\eta>0$, while $\varphi$ is locally Lipschitz on $(0,D)$;
hence $\tau$ is locally Lipschitz there, and the chain rule gives
\[
\tau'=\frac{|\varphi'|}{\sqrt{1-\varphi^2}}
\]
a.e., the inequality $\tau'\ge1$ being Lemma~\ref{lem:energy}.

For absolute continuity, take one of the two monotonicity intervals, say $[0,t_*]$. By
the preceding paragraph $\tau$ is absolutely continuous on $[a,b]$ whenever
$0<a<b<t_*$, so $\tau(b)-\tau(a)=\int_a^b\tau'$; letting $a\downarrow0$, $b\uparrow t_*$
and using the continuity of $\tau$ together with monotone convergence gives
$\int_0^{t_*}\tau'=\tau(t_*)-\tau(0)$. For a continuous nondecreasing function one
always has $\int_\alpha^\beta\tau'\le\tau(\beta)-\tau(\alpha)$, so equality across
$[0,t_*]$ forces equality on every subinterval, which is absolute continuity. The same
argument applies on $[t_*,D]$, hence on $[0,D]$. Lipschitz continuity would be false: if
$\varphi$ has a corner at $t_*$ then $1-\varphi^2$ vanishes linearly while
$|\varphi'|\to\varphi'(t_*^-)>0$, so $\tau'$ blows up like $(t_*-t)^{-1/2}$, integrably.

Consequently $\tau(D)-\tau(0)=\int_0^D\tau'\,dt\ge D$, and
$\tau([0,D])\subset[0,\pi]$ yields $D\le\pi$.
\end{proof}

\subsection{Comparison with the Legendre equation}\label{subsec:phasecomp}

Transporting the model phase by $\tau$ enlarges both coefficients in
\eqref{eq:phase-ode} by the factor $\tau'\ge1$.

\begin{lemma}\label{lem:supersol}
Fix $\mu>0$ and set
\[
\zeta(t):=\bar\gamma(\tau(t);\mu).
\]
Then $\zeta$ is continuous and locally absolutely continuous on $(0,D)$ and
satisfies, a.e.,
\begin{equation}\label{eq:supersol}
\zeta'
=
g_2\cos^2\zeta+r_2\sin^2\zeta,
\qquad
r_2:=\frac{\tau'}{\varphi},\quad
g_2:=\mu\varphi\tau',
\end{equation}
with
\begin{equation}\label{eq:coeff-domination}
r_2\ge\varphi^{-1},
\qquad
g_2\ge\mu\varphi.
\end{equation}
\end{lemma}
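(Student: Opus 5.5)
The plan is a direct chain-rule computation: $\tau$ is absolutely continuous and strictly increasing, while $\bar\gamma(\cdot;\mu)$ is $C^1$ on $(0,\pi)$ and solves the model Prüfer equation. We will use Lemma~\ref{lem:phase-basic}, applied to the model density $\sin\theta$, in the form
\[
\frac{d\bar\gamma}{d\theta}=\mu\sin\theta\cos^2\bar\gamma+\frac{1}{\sin\theta}\sin^2\bar\gamma .
\]

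\textbf{Regularity of $\zeta$.} Continuity is immediate. Fix a compact $[a,b]\subset(0,D)$. There $\tau([a,b])\subset[\tau(a),\tau(b)]\subset(0,\pi)$, since $\tau$ is strictly increasing with values in $[0,\pi]$, so $\bar\gamma$ is $C^1$, hence Lipschitz, on that image. The composition of a Lipschitz function with an absolutely continuous one is absolutely continuous. Moreover, the chain rule $\zeta'=\bar\gamma'(\tau)\,\tau'$ holds a.e. For a $C^1$ outer function this is standard: at every point where $\tau$ is differentiable the ordinary chain rule applies.

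\textbf{The identity \eqref{eq:supersol}.} Substitute $\theta=\tau(t)$ and use $\sin\tau=\varphi$ from Lemma~\ref{lem:clock}. This gives
\[
\zeta'=\tau'\Bigl(\mu\varphi\cos^2\zeta+\varphi^{-1}\sin^2\zeta\Bigr)=g_2\cos^2\zeta+r_2\sin^2\zeta ,
\]
with $g_2=\mu\varphi\tau'$ and $r_2=\tau'/\varphi$, exactly as in \eqref{eq:supersol}.

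\textbf{The domination \eqref{eq:coeff-domination}.} Lemma~\ref{lem:clock} gives $\tau'\ge1$ a.e. Since $\varphi>0$ on $(0,D)$ and $\mu>0$, multiplying yields $r_2\ge\varphi^{-1}$ and $g_2\ge\mu\varphi$ a.e.

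\textbf{Main obstacle.} The only delicate point is justifying the chain rule with a non-Lipschitz inner function, because $\tau'$ may blow up at $t_*$. Locality handles this: we work on compacts of $(0,D)$ and require only local absolute continuity, and the blow-up at $t_*$ is integrable by Lemma~\ref{lem:clock}. Absolute continuity near $t_*$ still holds, since $\bar\gamma$ is Lipschitz near $\tau(t_*)=\pi/2$.
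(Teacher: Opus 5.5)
Your proposal is correct and follows the paper's proof: apply the chain rule to $\bar\gamma\circ\tau$, use the model phase equation together with $\sin\tau=\varphi$ from Lemma~\ref{lem:clock}, and read off the domination from $\tau'\ge1$. Your extra regularity step fills in what the paper leaves implicit: $\bar\gamma$ is Lipschitz on the compact image $[\tau(a),\tau(b)]\subset(0,\pi)$, so composing it with the absolutely continuous clock gives local absolute continuity, even across a possible corner of $\varphi$ at $t_*$.
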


\begin{proof}
By Lemma~\ref{lem:clock}, $\tau$ is absolutely continuous and
$\sin\tau=\varphi$. Hence the chain rule and the model phase equation give
\[
\zeta'
=
\tau'\left(
\mu\sin\tau\cos^2\zeta+
(\sin\tau)^{-1}\sin^2\zeta
\right),
\]
which is \eqref{eq:supersol}. The inequalities
\eqref{eq:coeff-domination} follow from $\tau'\ge1$.
\end{proof}

\begin{lemma}\label{lem:comparison}
Fix $\mu>0$, let $\gamma$ be the phase of the natural-at-$0$ solution of
$(\varphi u')'+\mu\varphi u=0$, and let $\zeta$ be as above. Then
\begin{equation}\label{eq:phase-comparison}
\gamma(t)\le\zeta(t)=\bar\gamma(\tau(t);\mu)
\qquad\text{for all }t\in(0,D).
\end{equation}
\end{lemma}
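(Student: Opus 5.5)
The plan is to treat \eqref{eq:supersol} as saying that $\zeta$ is an a.e.\ supersolution of the phase equation \eqref{eq:phase-ode}, and then to run a Gronwall-type comparison. The only delicate point is the starting value at $t=0$. Write
\[
F(t,y)=\mu\varphi(t)\cos^2y+\varphi(t)^{-1}\sin^2y,
\]
so that $\gamma'=F(t,\gamma)$ by Lemma~\ref{lem:phase-basic}. By Lemma~\ref{lem:supersol} and \eqref{eq:coeff-domination},
\[
\zeta'=g_2\cos^2\zeta+r_2\sin^2\zeta\ \ge\ F(t,\zeta)\qquad\text{a.e. on }(0,D).
\]
On every compact $[s,s']\subset(0,D)$ the function $\varphi$ is bounded above and below by positive constants. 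Hence $F$ is Lipschitz in $y$ there, uniformly in $t$, with constant $L_s=\mu\max\varphi+\max\varphi^{-1}$.

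\emph{Comparison away from $0$.} Suppose $\gamma(s)\le\zeta(s)$ for some $s\in(0,D)$. Set $w=(\gamma-\zeta)^+$. It is locally absolutely continuous, and a.e.\ on $\{\gamma>\zeta\}$ we have
\[
w'=\gamma'-\zeta'\le F(t,\gamma)-F(t,\zeta)\le L_s w .
\]
Since $w(s)=0$, Gronwall gives $w\equiv0$ on $[s,s']$ for every $s'<D$. So it suffices to find starting points $s$ arbitrarily close to $0$ at which the inequality holds.

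\emph{Starting inequality.} If $\varphi(0)>0$, then $\tau(0)=\arcsin\varphi(0)>0$, so $\zeta(0^+)=\bar\gamma(\tau(0);\mu)>0=\gamma(0^+)$. The strict inequality then persists on a small interval $(0,s)$ by continuity.

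If $\varphi(0)=0$, then $\tau(0)=0$, and the asymptotics alone give only a non-strict comparison. To get room, I would first replace $\mu$ by $\mu(1+\delta)$ in the model, setting
\[
\zeta_\delta(t)=\bar\gamma(\tau(t);\mu(1+\delta)).
\]
Lemma~\ref{lem:supersol} still shows that $\zeta_\delta$ is a supersolution of $\gamma'=F(t,\gamma)$, since $\mu(1+\delta)\varphi\tau'\ge\mu\varphi$. By Lemma~\ref{lem:phase-asymptotic},
\[
\zeta_\delta(t)=\mu(1+\delta)\bigl(1-\cos\tau(t)\bigr)(1+o(1)).
\]
By Lemma~\ref{lem:clock},
\[
1-\cos\tau(t)=\int_0^t\sin\tau\,\tau'=\int_0^t\varphi\,\tau'\ \ge\ \Phi(t).
\]
Together with $\gamma(t)=\mu\Phi(t)(1+o(1))$ this gives
\[
\zeta_\delta(t)\ \ge\ \mu(1+\delta)\Phi(t)(1+o(1))\ >\ \gamma(t)
\]
for all small $t>0$. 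The comparison step then yields $\gamma\le\zeta_\delta$ on all of $(0,D)$.

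\emph{Limit and main obstacle.} Finally, let $\delta\downarrow0$. For fixed $\theta\in(0,\pi)$, the model phase $\bar\gamma(\theta;\mu)$ depends continuously on $\mu$. This follows from continuous dependence for the Legendre equation: its natural-at-$0$ solution is the analytic Legendre function, which is continuous in $\mu$ locally uniformly on $(0,\pi)$. Applying this with $\theta=\tau(t)\in(0,\pi)$ gives $\zeta_\delta(t)\to\zeta(t)$, and hence \eqref{eq:phase-comparison}.

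I expect the main obstacle to be the degenerate start at $t=0$ when $\varphi(0)=0$. There the phase equation is singular, both phases vanish, and the first-order asymptotics coincide up to the factor $(1-\cos\tau)/\Phi\ge1$, which may tend to $1$. The $\delta$-perturbation of $\mu$ is designed exactly to create a strict gap there. If the continuity in $\mu$ needs more care near the endpoint $\theta=\pi$, it only matters for $t<D$ with $\tau(t)<\pi$, which holds on $(0,D)$ by strict monotonicity of $\tau$.
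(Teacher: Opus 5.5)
Your proof is correct. The propagation step is the same as the paper's: Gronwall applied to $(\gamma-\zeta)^+$ is the content of the comparison theorem \cite[Thm.~4.5.2(1)]{Zettl} that the paper invokes, and you also treat the case $\varphi(0)>0$ identically.

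You genuinely differ in how you seed the comparison when $\varphi(0)=0$.

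The paper splits on $\kappa=\varphi'(0^+)\ge1$. If $\kappa>1$, it gets $\liminf_{t\to0^+}\zeta/\gamma\ge\kappa$ from $\tfrac12(\varphi^2)'=\varphi\varphi'$. If $\kappa=1$, it uses the monotonicity of $E$ in Lemma~\ref{lem:energy} to get $E\equiv1$. Hence $\varphi=\sin$ and $\tau(t)=t$ on $[0,t_*]$, so $\gamma\equiv\zeta$ there and the comparison starts trivially.

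Your shift of the model parameter to $\mu(1+\delta)$ removes both the case split and this rigidity sub-argument. It needs only $E\ge1$, in the form $\tau'\ge1$, through $1-\cos\tau\ge\Phi$, and it produces a strict gap of factor $1+\delta$ whatever $\kappa$ is.

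The price is the limit $\delta\downarrow0$, which rests on continuity of $\mu\mapsto\bar\gamma(\theta;\mu)$. The paper never needs this fact; it uses only monotonicity in $\mu$, and only later.

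When you write this up, note that local-uniform continuity of the Legendre solution on $(0,\pi)$ does not by itself fix the branch of the lifted phase. You want joint continuity of $(u,\sin\theta\,u')$ on $[0,\theta_1]\times[\mu,\mu+\epsilon]$, so that the normalization $\bar\gamma(0^+;\cdot)=0$ passes to the lift. This is supplied by the series ${}_2F_1(-\nu,\nu+1;1;\tfrac{1-\cos\theta}{2})$, whose coefficients are polynomials in $\mu=\nu(\nu+1)$.

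Conversely, the paper's route gives slightly more: in the borderline case $\kappa=1$ it identifies $\gamma$ with $\zeta$ exactly near $0$, rather than only bounding it.
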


\begin{proof}
On every compact subinterval of $(0,D)$, the coefficient pair
\[
(r_1,g_1)=(\varphi^{-1},\mu\varphi)
\]
is dominated by $(r_2,g_2)$ in \eqref{eq:coeff-domination}. Hence
\cite[Thm.~4.5.2(1)]{Zettl} propagates $\zeta\ge\gamma$ from any point
$t_0>0$ to the right. It remains to seed the comparison near $0$.

If $\varphi(0)>0$, then $\tau(0)>0$, so $\zeta(0)>0=\gamma(0^+)$ and the
inequality holds for all sufficiently small $t$.

Suppose $\varphi(0)=0$. By Lemma~\ref{lem:energy},
$\varphi'(0^+)\ge1$. If $\kappa:=\varphi'(0^+)>1$, then
Lemma~\ref{lem:phase-asymptotic} and $\sin\tau=\varphi$ give
\[
\gamma(t)=\mu\Phi(t)(1+o(1)),
\qquad
\zeta(t)=\frac{\mu}{2}\varphi(t)^2(1+o(1)).
\]
Since
\[
\frac12(\varphi^2)'=\varphi\varphi'
\ge(\kappa-o(1))\varphi,
\]
we obtain
\[
\liminf_{t\to0^+}\frac{\zeta(t)}{\gamma(t)}\ge\kappa>1.
\]
Thus $\zeta>\gamma$ near $0$.

Finally, if $\varphi'(0^+)=1$, then Lemma~\ref{lem:energy} gives
$E\equiv1$ on $(0,t_*)$, hence
\[
\varphi'=\sqrt{1-\varphi^2}\quad\text{a.e. on }(0,t_*).
\]
Therefore $\varphi(t)=\sin t$ and $\tau(t)=t$ on $[0,t_*]$. The two phases
then solve the same problem with the same normalization at $0$, so
$\gamma\equiv\zeta$ on $[0,t_*]$. The comparison on $(0,D)$ follows by
the first paragraph.
\end{proof}

A comparison at the right endpoint alone loses one eigenvalue index; reflection
and an interior comparison remove this loss.

\begin{lemma}\label{lem:reflection}
Let $j\ge1$, let $u_j$ be the $j$-th eigenfunction, and put
$\mu=\mu_j(\varphi)$. Set
\[
\psi(s):=\varphi(D-s),\qquad
\hat\tau(s):=\pi-\tau(D-s),\qquad
\hat\gamma(s):=j\pi-\gamma(D-s).
\]
Then $\hat\gamma$ is the phase of the reflected natural-at-$0$ solution and
$\hat\tau$ is the density clock for $\psi$. Consequently,
\begin{equation}\label{eq:reflected-comparison}
\hat\gamma(s)
\le
\bar\gamma(\hat\tau(s);\mu_j)
\qquad\text{for all }s\in(0,D).
\end{equation}
\end{lemma}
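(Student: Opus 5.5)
We verify the two identifications by direct substitution and then invoke Lemma~\ref{lem:comparison} for the reflected density $\psi$.

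\emph{Step 1: $\psi$ is admissible.} The density $\psi(s)=\varphi(D-s)$ is continuous on $[0,D]$ and positive on $(0,D)$. It satisfies $\psi''+\psi\le0$ distributionally, as already noted in \S\ref{subsec:clock}. The normalization \eqref{eq:normalization} is preserved, and the maximum of $\psi$ sits at $D-t_*$. Hence Lemmas~\ref{lem:energy}, \ref{lem:clock} and~\ref{lem:comparison} all apply to $\psi$.

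\emph{Step 2: $\hat\tau$ is the clock of $\psi$.} For $s\in[0,D-t_*]$ we have $D-s\in[t_*,D]$. Then
\[
\hat\tau(s)=\pi-\bigl(\pi-\arcsin\varphi(D-s)\bigr)=\arcsin\psi(s),
\]
and symmetrically $\hat\tau(s)=\pi-\arcsin\psi(s)$ on $[D-t_*,D]$. This is exactly the definition in Lemma~\ref{lem:clock} applied to $\psi$.

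\emph{Step 3: $\hat\gamma$ is the phase of the reflected solution.} Set $\hat u(s)=u_j(D-s)$, which solves $(\psi\hat u')'+\mu_j\psi\hat u=0$. Put $\hat v=\psi\hat u'$, so that $\hat v(s)=-v(D-s)$. Writing $(u,v)=R(\cos\gamma,-\sin\gamma)$ at $D-s$, we get
\[
(\hat u,\hat v)(s)=R(\cos\gamma,\sin\gamma)=R\bigl(\cos(-\gamma),-\sin(-\gamma)\bigr).
\]
Thus $-\gamma(D-s)$ is a continuous polar angle for $(\hat u,\hat v)$. So is $j\pi-\gamma(D-s)$ multiplied by the sign $(-1)^j$, since adding $j\pi$ flips the sign of both coordinates.

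Because $u_j$ satisfies the natural condition at $D$, the function $\hat u$ is the natural-at-$0$ solution for $\psi$. After replacing $\hat u$ by $(-1)^j\hat u$ to make $c_0>0$, its phase is the continuous lift $\hat\gamma(s)=j\pi-\gamma(D-s)$. This lift tends to $j\pi-\gamma(D^-)=0$ as $s\to0^+$ by Lemma~\ref{lem:phase-eigen}. The Prüfer phase is uniquely determined by continuity and the normalization $\hat\gamma(0^+)=0$, so this settles the identification.

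\emph{Step 4: conclusion.} Apply Lemma~\ref{lem:comparison} to $\psi$ with $\mu=\mu_j$, using Steps 2 and 3. This gives \eqref{eq:reflected-comparison} on $(0,D)$.

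The only delicate point is the bookkeeping in Step 3. The sign choice $(-1)^j$ must be tracked correctly, and we must check that the limit of $\hat\gamma$ at $0$ is exactly $0$ rather than another multiple of $\pi$. Both follow from Lemma~\ref{lem:phase-eigen} and from the $C^1$ phase ODE \eqref{eq:phase-ode} being invariant under this reflection. That invariance holds because
\[
\hat\gamma'(s)=\gamma'(D-s)=\mu\psi\cos^2\hat\gamma+\psi^{-1}\sin^2\hat\gamma.
\]
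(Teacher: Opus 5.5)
Your proposal is correct and follows the paper's route: check that $\psi$ is admissible, identify $\hat\tau$ as the clock of $\psi$ and $\hat\gamma$ as the normalized phase of the reflected eigenfunction (using $\gamma(D^-)=j\pi$ from Lemma~\ref{lem:phase-eigen}), then apply Lemma~\ref{lem:comparison} to $\psi$. Your lift argument for $(-1)^j(\hat u,\hat v)$ is a slightly more careful identification than the paper's, which only notes that $\hat\gamma$ solves the phase equation with $\hat\gamma(0^+)=0$; at a singular endpoint the non-natural solutions also have phase tending to $0$, so that property alone does not single out the natural solution.
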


\begin{proof}
By reflection invariance, $\psi$ satisfies the same hypotheses as $\varphi$.
Using $\gamma(D^-)=j\pi$ from Lemma~\ref{lem:phase-eigen}, direct differentiation
gives
\[
\hat\gamma'
=
\mu\psi\cos^2\hat\gamma+\psi^{-1}\sin^2\hat\gamma,
\qquad
\hat\gamma(0^+)=0.
\]
Thus $\hat\gamma$ is the normalized phase for the reflected eigenfunction.
Similarly,
\[
\sin\hat\tau=\psi,\qquad \hat\tau'=\tau'(D-s)\ge1.
\]
Applying Lemma~\ref{lem:comparison} to $\psi$ gives
\eqref{eq:reflected-comparison}.
\end{proof}

\begin{lemma}\label{lem:model}
For each fixed $\theta\in(0,\pi)$, the map
$\mu\mapsto\bar\gamma(\theta;\mu)$ is strictly increasing. Moreover, for
every $j\ge1$,
\begin{equation}\label{eq:model-identity}
\bar\gamma(\theta;j(j+1))
+
\bar\gamma(\pi-\theta;j(j+1))
=
j\pi.
\end{equation}
In particular, if $\mu<j(j+1)$, then
\[
\bar\gamma(\theta;\mu)
+
\bar\gamma(\pi-\theta;\mu)
<j\pi.
\]
\end{lemma}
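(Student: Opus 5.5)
Both claims are about the model problem with density $\sin\theta$ on $[0,\pi]$, i.e.\ the Legendre equation $(\sin\theta\,u')'+\mu\sin\theta\,u=0$. For this equation both endpoints are regular singular with Frobenius exponents $0,0$, and the natural solution at $0$ is $u=P_\nu(\cos\theta)$ with $\mu=\nu(\nu+1)$.

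\emph{Monotonicity in $\mu$.} This is the standard Sturm comparison for Pr\"ufer phases. Take $\mu_1<\mu_2$. The two phases satisfy
\[
\gamma'=\mu\sin\theta\cos^2\gamma+(\sin\theta)^{-1}\sin^2\gamma,
\]
and the coefficient $g=\mu\sin\theta$ is strictly larger for $\mu_2$ on $(0,\pi)$. I would seed the comparison with Lemma~\ref{lem:phase-asymptotic}: near $0$,
\[
\bar\gamma(\theta;\mu_i)=\mu_i(1-\cos\theta)(1+o(1)),
\]
so $\bar\gamma(\cdot;\mu_2)>\bar\gamma(\cdot;\mu_1)$ for small $\theta$. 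Then I would propagate this inequality with \cite[Thm.~4.5.2]{Zettl}, as in Lemma~\ref{lem:comparison}. Weak inequality propagates directly. For strict inequality, suppose the two phases touch at some first point $\theta_0$. There the difference of the right-hand sides is
\[
(\mu_2-\mu_1)\sin\theta_0\cos^2\gamma\ge0 .
\]
If $\cos\gamma(\theta_0)\neq0$, this forces a strict crossing, which is a contradiction. If $\cos\gamma=0$, the vector fields agree to first order, but the second-order term again gives strict separation. Alternatively, I would invoke the strict form of Zettl's theorem.

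\emph{The identity \eqref{eq:model-identity}.} For $\mu=j(j+1)$, the natural-at-$0$ solution is the Legendre polynomial $u=P_j(\cos\theta)$. It is regular at $\pi$ as well, with $v=\sin\theta\,u'\to0$ there, so it is the $j$-th Neumann eigenfunction of the model. By Lemma~\ref{lem:phase-eigen}, $\bar\gamma(\pi^-;j(j+1))=j\pi$. Now use the symmetry $P_j(-x)=(-1)^jP_j(x)$. Set $w(\theta)=u(\pi-\theta)=(-1)^ju(\theta)$; then $w$ is again the natural-at-$0$ solution of the same equation, up to sign. As in Lemma~\ref{lem:reflection}, the function $j\pi-\bar\gamma(\pi-\theta;j(j+1))$ is a phase for the reflected solution $w$, vanishes at $0^+$, and satisfies the same phase ODE. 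The density $\sin$ is reflection invariant, so the clock is unchanged. The phase ODE has unique solutions with the normalization $\gamma(0^+)=0$, and this uniqueness is supplied by Lemma~\ref{lem:phase-asymptotic}. Since the sign of $w$ does not change the phase modulo the normalization $c_0>0$ fixed after replacing $u$ by $-u$, we get
\[
j\pi-\bar\gamma(\pi-\theta;j(j+1))=\bar\gamma(\theta;j(j+1)),
\]
which is \eqref{eq:model-identity}.

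\emph{The final claim.} If $\mu<j(j+1)$, apply the strict monotonicity at $\theta$ and at $\pi-\theta$, both in $(0,\pi)$, and add the two inequalities:
\[
\bar\gamma(\theta;\mu)+\bar\gamma(\pi-\theta;\mu)<\bar\gamma(\theta;j(j+1))+\bar\gamma(\pi-\theta;j(j+1))=j\pi .
\]

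\emph{Main obstacle.} I expect the delicate step to be the uniqueness of the normalized phase at the singular endpoint, which is needed for the reflection identification. The right-hand side of the phase ODE is singular at $0$ through the term $(\sin\theta)^{-1}\sin^2\gamma$. I would handle this by working at the level of solutions rather than phases. At a limit-circle endpoint with exponents $0,0$, the natural-at-$0$ solution is unique up to scaling, and the phase is determined by the solution together with its initial value $0$. This avoids any ODE uniqueness argument for the phase itself.
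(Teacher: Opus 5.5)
Your proposal is correct and follows the paper's proof step for step. Strict monotonicity is seeded by Lemma~\ref{lem:phase-asymptotic} and propagated by the strict form of \cite[Thm.~4.5.2]{Zettl}; \eqref{eq:model-identity} comes from $\bar\gamma(\pi^-;j(j+1))=j\pi$, the reflection symmetry of the model, and uniqueness of the natural solution up to scale; and the final claim is obtained by adding the two strict inequalities.

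Two side remarks should not be relied on. First, the phase equation does \emph{not} have a unique solution with $\gamma(0^+)=0$: the logarithmic solution $u\sim\log(1/\theta)$, $v=\sin\theta\,u'\to-1$, also has phase tending to $0$, like $1/\log(1/\theta)$. So your solution-level argument is genuinely necessary rather than a convenience. Second, in your touching analysis at $\cos\gamma=0$ the second derivative of the difference also vanishes, so it does not give separation. The cleaner route is the strict Zettl theorem, or Gronwall applied to $d'\ge L(\theta)\,d$ with $d=\bar\gamma(\cdot;\mu_2)-\bar\gamma(\cdot;\mu_1)$.
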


\begin{proof}
For $0<\mu_1<\mu_2$, the model phases have the same coefficient
$r=(\sin\theta)^{-1}$ and satisfy
\[
g_1=\mu_1\sin\theta<\mu_2\sin\theta=g_2.
\]
By \eqref{eq:model-asymptotic}, the phase for $\mu_2$ is strictly larger near
$0$, and \cite[Thm.~4.5.2(5)]{Zettl} then gives
\[
\bar\gamma(\theta;\mu_2)>
\bar\gamma(\theta;\mu_1)
\qquad (0<\theta<\pi).
\]

For $\mu=j(j+1)$, the natural eigenfunction is
$P_j(\cos\theta)$, and Lemma~\ref{lem:phase-eigen} gives
$\bar\gamma(\pi^-)=j\pi$. Since the model is symmetric, the reflected phase
$j\pi-\bar\gamma(\pi-\theta)$ is the normalized phase of the same problem.
Uniqueness of the natural solution up to scale gives
\eqref{eq:model-identity}. The final assertion follows from strict
monotonicity.
\end{proof}
\subsection{The sharp comparison}\label{subsec:sharp}

\begin{lemma}\label{lem:splitting}
Let $j\ge1$, put $\mu=\mu_j(\varphi)$, and let $t_0\in(0,D)$ with
$\theta_0:=\tau(t_0)$. Then
\begin{equation}\label{eq:sandwich}
j\pi
=
\gamma(t_0)+\hat\gamma(D-t_0)
\le
\bar\gamma(\theta_0;\mu)
+
\bar\gamma(\pi-\theta_0;\mu).
\end{equation}
\end{lemma}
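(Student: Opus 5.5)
The plan is to derive the equality from the definition of $\hat\gamma$ in Lemma~\ref{lem:reflection}, and the inequality by adding the forward comparison of Lemma~\ref{lem:comparison} to the reflected comparison \eqref{eq:reflected-comparison}, evaluated at complementary points.

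\emph{The equality.} By definition $\hat\gamma(s)=j\pi-\gamma(D-s)$. Putting $s=D-t_0\in(0,D)$ gives $\hat\gamma(D-t_0)=j\pi-\gamma(t_0)$, so $\gamma(t_0)+\hat\gamma(D-t_0)=j\pi$ identically. The only input is that $\gamma(D^-)=j\pi$ (Lemma~\ref{lem:phase-eigen}), which is what makes $\hat\gamma$ the normalized phase of the reflected problem. This fact was already used in Lemma~\ref{lem:reflection}.

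\emph{The forward half.} Apply Lemma~\ref{lem:comparison} to the eigenfunction $u_j$, with $\mu=\mu_j(\varphi)>0$. Since $u_j$ is the natural-at-$0$ solution for this value of $\mu$, its phase is the $\gamma$ of that lemma. Evaluating at $t_0$ gives $\gamma(t_0)\le\bar\gamma(\tau(t_0);\mu)=\bar\gamma(\theta_0;\mu)$.

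\emph{The reflected half.} Apply \eqref{eq:reflected-comparison} at $s=D-t_0$ to get $\hat\gamma(D-t_0)\le\bar\gamma(\hat\tau(D-t_0);\mu)$. By the definition of the reflected clock, $\hat\tau(D-t_0)=\pi-\tau(t_0)=\pi-\theta_0$. Hence $\hat\gamma(D-t_0)\le\bar\gamma(\pi-\theta_0;\mu)$. Adding the two halves gives \eqref{eq:sandwich}.

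The only points that need checking are the bookkeeping ones. First, the quantities must be evaluated at interior points: $t_0\in(0,D)$ forces $D-t_0\in(0,D)$ as well, and $\theta_0\in(0,\pi)$ because $\tau$ is strictly increasing with values in $[0,\pi]$ (Lemma~\ref{lem:clock}). This puts both model phases $\bar\gamma(\theta_0;\mu)$ and $\bar\gamma(\pi-\theta_0;\mu)$ at interior arguments. Second, the same $\mu$ must appear in both comparisons, which holds because the reflected eigenfunction has the same eigenvalue $\mu_j(\varphi)$. I expect no real obstacle: the whole analytic content, namely seeding the comparison near the endpoint and the absolute continuity of $\tau$, sits in Lemmas~\ref{lem:comparison} and~\ref{lem:reflection}. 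Combined with Lemma~\ref{lem:model}, \eqref{eq:sandwich} will then force $\mu\ge j(j+1)$.
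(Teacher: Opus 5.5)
Your proposal is correct and follows the paper's proof: the equality is the definition $\hat\gamma(s)=j\pi-\gamma(D-s)$ evaluated at $s=D-t_0$, and the inequality is the sum of Lemma~\ref{lem:comparison} at $t_0$ and \eqref{eq:reflected-comparison} at $D-t_0$, using $\hat\tau(D-t_0)=\pi-\tau(t_0)=\pi-\theta_0$. Your extra bookkeeping is accurate and only makes explicit what the paper leaves implicit, namely that $\theta_0\in(0,\pi)$ by strict monotonicity of $\tau$, and that the reflected eigenfunction carries the same eigenvalue $\mu_j(\varphi)$.
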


\begin{proof}
The equality follows from the definition
$\hat\gamma(s)=j\pi-\gamma(D-s)$, while the inequality follows from
Lemmas~\ref{lem:comparison} and~\ref{lem:reflection}, together with
$\hat\tau(D-t_0)=\pi-\tau(t_0)$.
\end{proof}

\begin{theorem}\label{thm:sl}
Assume \eqref{eq:intro-density} and \eqref{eq:normalization}. Then
\begin{equation}\label{eq:sl-ineq}
\mu_j(\varphi)\ge j(j+1)\qquad\text{for every }j\ge0.
\end{equation}
For $j\ge1$, equality holds if and only if
\[
D=\pi,\qquad \varphi(t)=\sin t,
\]
in which case equality holds for every $j$.
\end{theorem}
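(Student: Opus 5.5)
The inequality will come from Lemma~\ref{lem:splitting} combined with the model identity of Lemma~\ref{lem:model}, argued by contradiction. The case $j=0$ is trivial, since $\mu_0=0$. Fix $j\ge1$, put $\mu=\mu_j(\varphi)$, and suppose $\mu<j(j+1)$. Pick any $t_0\in(0,D)$ and set $\theta_0=\tau(t_0)$. Since $\tau$ is strictly increasing with values in $[0,\pi]$ (Lemma~\ref{lem:clock}), $\theta_0$ lies in $(0,\pi)$. Lemma~\ref{lem:splitting} then gives
\[
j\pi\le\bar\gamma(\theta_0;\mu)+\bar\gamma(\pi-\theta_0;\mu).
\]
On the other hand, the strict inequality in Lemma~\ref{lem:model} says the right-hand side is $<j\pi$. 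This is a contradiction, so $\mu_j(\varphi)\ge j(j+1)$.

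For the converse direction of rigidity, take $D=\pi$ and $\varphi=\sin$. Then $X_\varphi$ is the Legendre model, whose Neumann eigenvalues are $j(j+1)$ with eigenfunctions $P_j(\cos t)$ (Corollary~\ref{cor:spindle-collapse}), so equality holds at every $j$.

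For the forward direction, suppose $\mu_j(\varphi)=j(j+1)$ for some $j\ge1$. By Lemma~\ref{lem:model}, the right side of \eqref{eq:sandwich} equals exactly $j\pi$, so both comparisons in Lemma~\ref{lem:splitting} are equalities at every $t_0$:
\[
\gamma(t)=\bar\gamma(\tau(t);\mu)\quad\text{and}\quad \hat\gamma(s)=\bar\gamma(\hat\tau(s);\mu)\qquad\text{for all } t,s\in(0,D).
\]
From $\gamma=\zeta$ on $(0,D)$ and Lemma~\ref{lem:supersol},
\[
\bigl(\tau'-1\bigr)\bigl(\mu\varphi\cos^2\zeta+\varphi^{-1}\sin^2\zeta\bigr)=0\quad\text{a.e.}
\]
The second factor is strictly positive, so $\tau'=1$ a.e. Since $\tau$ is absolutely continuous, this gives $\tau(t)=\tau(0)+t$.

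It remains to show $\tau(0)=0$ and $\tau(D)=\pi$. I expect this endpoint step, and making the equality case of the comparison theorem rigorous, to be the main obstacle. Two routes are available.

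The first route is through the endpoint behaviour. The seeding argument in Lemma~\ref{lem:comparison} showed that if $\varphi(0)>0$, then $\zeta(0^+)=\bar\gamma(\tau(0);\mu)>0=\gamma(0^+)$, contradicting $\gamma\equiv\zeta$. Hence $\varphi(0)=0$, so $\tau(0)=0$. The same argument applied to the reflection $\psi$ through Lemma~\ref{lem:reflection} gives $\varphi(D)=0$, so $\tau(D)=\pi$.

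The second route, if one prefers to avoid endpoint limits, is to take $t_0\to D^-$ in the identity $\gamma(t_0)=\bar\gamma(\tau(t_0);\mu)$. The left side tends to $j\pi$. Since $\bar\gamma$ is strictly increasing in $\theta$ with $\bar\gamma(\pi^-;j(j+1))=j\pi$, the right side tends to $j\pi$ only if $\tau(D)=\pi$.

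Either way, $\pi=\tau(D)-\tau(0)=D$, so $D=\pi$ and $\tau(t)=t$. Then $\varphi=\sin\tau=\sin t$, using the normalization \eqref{eq:normalization}. Without the normalization, $\varphi$ is a positive multiple of $\sin$, as Theorem~\ref{thm:intro-1d} states.

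Finally, since $\varphi=\sin$ on $[0,\pi]$, equality then holds for every $j$ by the converse direction above.
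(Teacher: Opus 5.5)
Your argument is correct and follows the paper's proof step for step: contradiction via Lemma~\ref{lem:splitting} and Lemma~\ref{lem:model}, then equality in both comparisons, $\tau'=1$ a.e., and $\varphi(0)=\varphi(D)=0$ from the seeding argument. The one deviation is the last step, where you integrate $\tau'=1$ using the absolute continuity from Lemma~\ref{lem:clock} to get $\tau(t)=t$ and read off $\varphi=\sin\tau=\sin t$ directly. This is shorter than the paper's route through $E\equiv1$, ruling out jumps of $\varphi'$, and solving $\varphi''+\varphi=0$. One small gap: your ``second route'' only yields $\tau(D)=\pi$, so $\tau(0)=0$ still needs either the reflected identity or your first route.
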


\begin{proof}
\textbf{Inequality.} The case $j=0$ is immediate. For $j\ge1$, put
$\mu=\mu_j(\varphi)$. If $\mu<j(j+1)$, then Lemma~\ref{lem:model} gives
\[
\bar\gamma(\theta_0;\mu)
+
\bar\gamma(\pi-\theta_0;\mu)
<j\pi,
\]
contradicting \eqref{eq:sandwich}.

\textbf{Equality.}
The model $\varphi=\sin t$ on $[0,\pi]$ has eigenvalues $j(j+1)$, so the
converse is immediate. Suppose
\[
\mu_j(\varphi)=j(j+1)=:\mu
\]
for some $j\ge1$. By \eqref{eq:model-identity}, equality holds in
\eqref{eq:sandwich} for every $t_0$. Since each term on the left is bounded
by the corresponding term on the right, Lemmas~\ref{lem:comparison} and
\ref{lem:reflection} give
\[
\gamma(t)=\bar\gamma(\tau(t);\mu)
\qquad\text{for all }t\in(0,D).
\]
The reflected comparison gives the analogous equality from the right.

If $\varphi(0)>0$, Case~1 of Lemma~\ref{lem:comparison} gives
$\bar\gamma(\tau(0);\mu)>0=\gamma(0^+)$, a contradiction. Hence
$\varphi(0)=0$. Applying the same argument to the reflected density gives
$\varphi(D)=0$.

Now \eqref{eq:phase-ode} and \eqref{eq:supersol} imply
\[
(\tau'-1)
\bigl(\mu\varphi\cos^2\gamma+\varphi^{-1}\sin^2\gamma\bigr)=0
\quad\text{a.e.}
\]
The bracket is positive, so $\tau'=1$ a.e. Thus
\[
(\varphi')^2=1-\varphi^2
\quad\text{a.e. on }(0,D).
\]
Together with Lemma~\ref{lem:energy}, this gives $E\equiv1$. Since
$\varphi'$ is monotone, its only possible discontinuities are jumps. At a
jump point $s$, the preceding identity gives
\[
|\varphi'(s^-)|=|\varphi'(s^+)|.
\]
Monotonicity would therefore force a sign change, which can occur only at
$t_*$. But $\varphi(t_*)=1$, so both one-sided limits there are zero.
Thus $\varphi'$ is continuous, and
\[
\varphi''+\varphi=0
\qquad\text{on }(0,t_*)\cup(t_*,D).
\]
Since $\varphi(t_*)=1$ and $\varphi'(t_*)=0$,
\[
\varphi(t)=\cos(t-t_*).
\]
The endpoint conditions $\varphi(0)=\varphi(D)=0$ and
$\varphi>0$ on $(0,D)$ then give
\[
t_*=\frac{\pi}{2},\qquad D=\pi,\qquad \varphi(t)=\sin t.
\]
\end{proof}

The one-dimensional $\mathrm{RCD}(1,2)$ spaces are classified in
\cite{KitabeppuLakzian} as circles or intervals with density $e^{-f}$,
where $f$ is $(1,2)$-convex. The positive curvature lower bound excludes
the circle \cite[\S4]{KitabeppuLakzian}, so such a space is an interval
$[0,D]$ with $\varphi=e^{-f}$ satisfying \eqref{eq:intro-density}. By
Lemma~\ref{lem:cheeger-identification}, its Neumann Laplacian is
$\mathsf L_\varphi$, and the preceding theorem applies.

\begin{proof}[Proof of Theorem~\ref{thm:intro-1d}]
Multiplying $\varphi$ by a positive constant changes neither
\eqref{eq:intro-density} nor $\mathsf L_\varphi$. Thus we may impose
\eqref{eq:normalization}. The result is then exactly Theorem~\ref{thm:sl}.
\end{proof}

\begin{remark}\label{rem:sharp}
By Corollary~\ref{cor:spindle-collapse}, the model is, up to a constant
factor in its measure, the mGH limit of the collapsing spindles
$\Sph^2_\alpha$, whose radial spectra are exactly $\{j(j+1)\}$. Hence
\eqref{eq:main-ineq} is attained on the one-dimensional stratum of the mGH
completion of $\Cc$, and cannot be improved.
\end{remark}

\appendix
\section{Proof of Theorem~\ref{thm:slp_discrete}}\label{Appendix}
Consider the Sturm Liouville Problem \eqref{eq:slp}
\begin{equation*}
-(\varphi u')'=\mu\,\varphi u \quad\text{on }(0,D),
\qquad
\lim_{t\to0^+}\varphi u'=\lim_{t\to D^-}\varphi u'=0.
\end{equation*}
The associated energy form is $\mathcal E(u)=\int_0^D (u')^2\varphi\,dt$,
with form domain
$H^1(\varphi)=\{u\in L^2(\varphi\,dt)\cap AC_{\mathrm{loc}}(0,D): u'\in L^2(\varphi\,dt)\}$,
and maximal domain
\[
D_{\max}=\bigl\{u\in L^2(\varphi\,dt):\ u,\ \varphi u'\in AC_{\mathrm{loc}}(0,D),\
-\varphi^{-1}(\varphi u')'\in L^2(\varphi\,dt)\bigr\}.
\]
\begin{remark}\label{rem:reflection-invariance}
The hypotheses are invariant under the reflection $\psi(s)=\varphi(D-s)$, so it suffices
to prove endpoint statements at $0$.
\end{remark}

\subsection{The Endpoint Classification}\label{subsec:endpoints}

At an endpoint $e\in\{0,D\}$ with $\varphi(e)>0$, the problem is regular and
the natural condition is the classical Neumann condition $u'(e)=0$. If $\varphi(e)=0$ the endpoint is
regular when $\int_e\varphi^{-1}\,dt<\infty$ and singular otherwise. The following lemma treats
both cases simultaneously.  

\begin{lemma}\label{lem:endpoints}
Let $e\in\{0,D\}$ with $\varphi(e)=0$ and fix $\mu\in\R$, and consider
$(\varphi u')'+\mu\varphi u=0$ in the quasi-derivative sense
$u,\varphi u'\in AC_{\mathrm{loc}}(0,D)$. Then $e$ is limit-circle non-oscillatory
\textup{(LCNO)} in the sense of \cite[\S7.3]{Zettl}; the limit
$\lim_{t\to e}\varphi(t)u'(t)$ exists for every solution; and the solutions for which it
vanishes form a one-dimensional space, each nontrivial member having a finite nonzero
limit at $e$. Finally, the operator $\mathsf L_\varphi$
associated with the closed form $(\mathcal E,H^1(\varphi))$ is self-adjoint with the
separated domain
\[
D(\mathsf L_\varphi)=\bigl\{u\in D_{\max}:\ \lim_{t\to e}\varphi(t)u'(t)=0
\ \text{ at each endpoint } e\in\{0,D\}\bigr\}.
\]
\end{lemma}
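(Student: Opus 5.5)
The plan is to work at $e=0$ only; the case $e=D$ then follows from the reflection $\psi(s)=\varphi(D-s)$, as in Remark~\ref{rem:reflection-invariance}. The first step is to record what concavity gives near $0$. Since $\varphi(0)=0$ and $\varphi$ is concave and positive on $(0,D)$, the ratio $\varphi(t)/t$ is nonincreasing, so $\varphi(t)\ge ct$ on $(0,t_0]$ for some $c>0$. Also, $\varphi$ is nondecreasing on some $(0,t_0]$: a concave function increases and then decreases, and it cannot decrease from the value $0$ while staying positive. Together with $\varphi\le\max\varphi$, this gives three facts:
\[
\int_0^{t_0}\varphi\,dt<\infty,\qquad
\varphi(s)^{-1}\!\int_0^s\varphi(r)\,dr\le s\quad(0<s\le t_0),\qquad
\Bigl|\int_t^{t_0}\varphi^{-1}\Bigr|\le C\bigl(1+|\log t|\bigr).
\]

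\textbf{LCNO.} At $\mu=0$ the equation has the two solutions $1$ and $w(t)=\int_t^{t_0}\varphi^{-1}$. Both lie in $L^2(\varphi\,dt)$ near $0$, because $\varphi w^2\le C(1+\log^2 t)$ is integrable; hence $0$ is limit-circle. The solution $1$ has no zeros, so the endpoint is non-oscillatory at $\mu=0$. Limit-circle and non-oscillation at an endpoint do not depend on $\mu$ (see \cite[\S7.3]{Zettl}), so $0$ is LCNO for every $\mu$.

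\textbf{The limit of $\varphi u'$ and the natural solutions.} Every solution $u$ lies in $L^2(\varphi)$ by the LC property. By Cauchy--Schwarz, $\int_0\varphi|u|\le(\int_0\varphi)^{1/2}(\int_0\varphi u^2)^{1/2}<\infty$. Integrating $(\varphi u')'=-\mu\varphi u$ then shows that $\lim_{t\to0}\varphi u'$ exists. To produce a natural solution, I would solve the Volterra equation
\[
u(t)=1-\mu\int_0^t\varphi(s)^{-1}\int_0^s\varphi(r)u(r)\,dr\,ds .
\]
Its kernel is bounded by the second displayed fact, so Picard iteration converges on a short interval. The resulting solution satisfies $u(t)\to1$ and $\varphi u'=-\mu\int_0^t\varphi u\to0$, and it extends to all of $(0,D)$ by linear theory.

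The map $u\mapsto\lim_{t\to 0}\varphi u'$ is a linear functional on the two-dimensional solution space. It is nonzero, because otherwise every solution would be bounded, contradicting the fact that, by the Wronskian, a second solution has $\varphi u'$ tending to a nonzero constant and $|u|\to\infty$ at least when $\int_0\varphi^{-1}=\infty$. In the regular case $\int_0\varphi^{-1}<\infty$ one checks nonvanishing directly from the initial-value problem for $(u,\varphi u')$. Hence the kernel of this functional, i.e.\ the space of natural solutions, is one-dimensional.

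Any nontrivial natural solution is therefore a multiple of the Volterra solution, whose limit at $0$ is $1$; so its limit is finite and nonzero. A Gronwall argument applied to the same integral equation gives the same conclusion: a natural solution with $u(0^+)=0$ vanishes identically.

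\textbf{Self-adjoint domain.} Let $A$ be the form operator of $(\mathcal E,H^1(\varphi))$, which is closed because $H^1(\varphi)$ is complete. I would show $D(A)$ is contained in the separated domain in the statement. Take $u\in D(A)$ and test the identity $\mathcal E(u,v)=\langle Au,v\rangle$ against $v\in H^1(\varphi)$ equal to $1$ near $0$ and to $0$ near $D$. Integrating by parts on $(\varepsilon,D)$ leaves the boundary term $\varphi u'(\varepsilon)$, which must therefore tend to $0$; the endpoint $D$ is handled symmetrically. The separated condition at two LC endpoints defines a self-adjoint realization \cite[\S10]{Zettl}. Two self-adjoint operators, one extending the other, coincide, so $D(\mathsf L_\varphi)$ equals the separated domain.

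The main obstacle is making the integration-by-parts limits rigorous near a degenerate endpoint, i.e.\ checking that $\varphi u'v\big|_\varepsilon$ actually converges for $u\in D(A)$ and $v\in H^1(\varphi)$. The same issue appears in distinguishing the regular and singular subcases when proving that the functional $u\mapsto\lim\varphi u'$ is nonzero. I would handle it using the existence of $\lim\varphi u'$ shown above, together with the choice of test functions $v$ that are constant near the endpoint.
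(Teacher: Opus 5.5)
Your outline is sound and follows the paper on three points: the LCNO classification, the existence of $\lim\varphi u'$ via Cauchy--Schwarz, and the cutoff argument placing $D(\mathsf L_\varphi)$ inside the separated domain. It differs in two places.

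\textbf{Natural solutions.} The paper argues a priori. For any solution with $\varphi u'\to0$ it bootstraps $u'=o(1/t)$, then $u=O(\log(1/t))$, then $u'=O(\log(1/t))$ to get a finite limit. It rules out a zero limit by a Gronwall-type estimate and gets one-dimensionality from the Wronskian. You instead construct the natural solution from a Volterra equation; your bound $\varphi(s)^{-1}\int_0^s\varphi\le s$ makes it a contraction on $C([0,\delta])$. You then transfer its limit $1$ to every natural solution via one-dimensionality. This is constructive and skips the bootstrap, but it puts all the weight on showing that $u\mapsto\lim\varphi u'$ is nonzero.

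\textbf{The domain.} The paper proves only that $\mathsf L_N$ is symmetric, by Green's formula together with $\varphi u'-\lim\varphi u'=o(\sqrt t)$ and $u=O(\log(1/t))$. It concludes from $\mathsf L_\varphi\subseteq\mathsf L_N\subseteq\mathsf L_N^*\subseteq\mathsf L_\varphi^*=\mathsf L_\varphi$. You cite self-adjointness of the separated realization from Zettl. That is fine once you note that the natural condition is the bracket condition $[u,1](e)=0$ and that $1$ is nondegenerate there, since $[1,w](e)\neq0$ for $w=\int_t^{t_0}\varphi^{-1}$. The obstacle you flag at this step is not real. For $u\in D_{\max}$, Cauchy--Schwarz gives $(\varphi u')'\in L^1$, so $\lim\varphi u'$ exists exactly as for solutions. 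You should also record that testing against $C_c^\infty(0,D)$ gives $D(\mathsf L_\varphi)\subseteq D_{\max}$.

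\textbf{The step that needs repair} is the nonvanishing of the functional. The claim ``otherwise every solution would be bounded'' is unsupported. The contraction argument gives uniqueness only among bounded functions, and no a priori bound on an arbitrary natural solution; that bound is exactly what the paper's Step~2 supplies. The claim is also unnecessary, as is the regular/singular split. Here is the direct argument:
\begin{itemize}
\item Let $u_1$ be your Volterra solution, so $u_1\to1$ and $|\varphi u_1'|=|\mu\int_0^t\varphi u_1|\le Ct$.
\item Any solution $u_2$ has $\varphi u_2'$ bounded near $0$, since its limit exists. Then $\varphi\ge ct$ gives $|u_2(t)|\le C(1+|\log t|)$.
\item Hence $u_2\,\varphi u_1'\to0$. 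The constant Wronskian $u_1\varphi u_2'-u_2\varphi u_1'$ therefore equals $\lim\varphi u_2'$, which is nonzero when $u_2$ is independent of $u_1$.
\end{itemize}
This is your ``by the Wronskian'' claim made rigorous, and it works whether or not $\int_0\varphi^{-1}$ is finite.
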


\begin{proof}
Say $e=0$; the case $e=D$ follows by Remark~\ref{rem:reflection-invariance}. Fix
$t_0\in(0,D)$ and put $c:=\varphi(t_0)/t_0>0$. Concavity together with $\varphi(0)=0$
gives the linear lower bound $\varphi(t)\ge ct$ on $[0,t_0]$, while $\varphi\le1$ by
\eqref{eq:normalization}.

 \emph{Step 1: Classification of the endpoint.} 
 For $\mu=0$ in \eqref{eq:slp}, the solutions are
\[
u_1\equiv1,
\qquad
u_2(t)=\int_t^{t_0}\varphi(s)^{-1}\,ds.
\]
Since $0\le\varphi\le1$,
\[
\int_0^{t_0}\varphi(t)\,dt\le t_0<\infty,
\]
so $u_1\in L^2((0,t_0),\varphi\,dt)$. Moreover, $\varphi(t)\ge ct$ gives
\[
\int_x^{t_0}\varphi(t)^{-1}\,dt
\le c^{-1}\log(t_0/x),
\]
and hence
\[
\int_0^{t_0}\varphi(x)
\Bigl(\int_x^{t_0}\varphi(t)^{-1}\,dt\Bigr)^2dx
\le
c^{-2}\int_0^{t_0}\log^2(t_0/x)\,dx<\infty.
\]
Thus both solutions belong to $L^2(\varphi\,dt)$ near $0$. Since the
classification of an endpoint as limit-point or limit-circle is independent
of the spectral parameter \cite[\S7.2]{Zettl}, the endpoint $0$ is
limit-circle. Likewise, the oscillatory or non-oscillatory character of a
limit-circle endpoint is independent of the spectral parameter
\cite[Thm.~7.3.1]{Zettl}. At $\mu=0$, every nonzero solution
\[
\alpha+\beta\int_t^{t_0}\varphi^{-1}
\]
is monotone and hence has at most one zero in $(0,t_0)$. Thus $0$ is
limit-circle non-oscillatory (LCNO).

\emph{Step 2: limits at the endpoint.} 
Let $u$ be any solution. By Step 1 and the Cauchy--Schwarz inequality,
\[
\int_0^t\varphi|u|
\le
\left(\int_0^t\varphi\right)^{1/2}
\left(\int_0^t\varphi u^2\right)^{1/2}
<\infty.
\]
Hence $(\varphi u')'=-\mu\varphi u\in L^1$ near $0$, and therefore
$\lim_{t\to0^+}\varphi(t)u'(t)$
exists and is finite. Suppose now that $\lim_{t\to0^+}\varphi(t)u'(t)=0$. Then
\[
\varphi(t)u'(t)=-\mu\int_0^t\varphi u,
\]
so, using $\varphi(t)\ge ct$,
\[
|u'(t)|
\le
\frac{|\mu|}{ct}\int_0^t\varphi|u|
=o(1/t).
\]
Thus $u(t)=O(\log(1/t))$. Since $\varphi\le1$, substituting this back gives
\[
|u'(t)|
\le
\frac{C}{t}\int_0^t\log(1/s)\,ds
=O(\log(1/t)),
\]
which is integrable near $0$. Hence
$c_0=\lim_{t\to0^+}u(t)$
exists and is finite. Finally, if $c_0=0$, then for sufficiently small $t>0$,
\[
M(t)=\sup_{0<s\le t}|u(s)|<\infty.
\]
For $s\le t$,
\[
\int_0^s\varphi|u|\le sM(t),
\]
and hence
\[
|u'(s)|\le\frac{|\mu|}{c}M(t).
\]
If $\mu=0$, the natural condition gives $\varphi u'\equiv0$, so $u$ is
constant and $u(0^+)=0$ implies $u\equiv0$. If $\mu\ne0$, integration from
$0$ gives
\[
M(t)\le\frac{|\mu|t}{c}M(t),
\]
which forces $M(t)=0$ for $t<c/|\mu|$. Thus in either case $u\equiv0$ near
$0$ (and hence, by uniqueness, on $(0,D)$).

\emph{Step 3: the natural solutions.}
By Step 2, the map
\[
u\longmapsto\lim_{t\to0^+}\varphi(t)u'(t)
\]
is a well-defined linear functional on the two-dimensional solution space. It is not
identically zero: otherwise every solution would have a finite limit at $0$ by
Step 2, and for two linearly independent solutions $u_1,u_2$ the Wronskian
\[
u_1(\varphi u_2')-u_2(\varphi u_1')
\]
would tend to $0$, contradicting that it is a nonzero constant. Hence its kernel
is one-dimensional. By Step 2, every nontrivial solution in this kernel has a
finite nonzero limit at $0$.

\emph{Step 4: the operator and its domain.}
The form $\mathcal E$ is closed: if $(u_n)$ is Cauchy for
$\mathcal E+\|\cdot\|_{L^2(\varphi\,dt)}^2$, then $u_n\to u$ and $u_n'\to g$ in
$L^2(\varphi\,dt)$, and local $H^1$ convergence gives $g=u'$. Hence, by the
representation theorem for closed nonnegative forms, the associated operator
$\mathsf L_\varphi$ is self-adjoint, and testing against $C_c^\infty(0,D)$ gives
\[
\mathsf L_\varphi u=-\varphi^{-1}(\varphi u')',
\qquad
D(\mathsf L_\varphi)\subset D_{\max}.
\]

For $u\in D_{\max}$, Cauchy--Schwarz gives
\[
\int_0^D|(\varphi u')'|
\le
\left(\int_0^D\varphi\right)^{1/2}
\left\|\varphi^{-1}(\varphi u')'\right\|_{L^2(\varphi\,dt)}
<\infty.
\]
Thus $\varphi u'$ has finite limits at both endpoints. Moreover, using
$\varphi(t)\ge ct$ and $\varphi\le1$,
\[
\varphi(t)u'(t)-\varphi(0^+)u'(0^+)=o(\sqrt t),
\qquad
u(t)=O(\log(1/t))
\quad\text{as }t\to0^+,
\]
and similarly at $D$.

Let $u\in D(\mathsf L_\varphi)$ and choose a cutoff $\chi\in H^1(\varphi)$
which is $1$ near $0$ and vanishes away from $0$. Integrating by parts and
using the defining identity for the associated operator gives
\[
\int_0^D(\mathsf L_\varphi u)\chi\,\varphi\,dt
=
\mathcal E(u,\chi)
+
\lim_{t\to0^+}\varphi(t)u'(t).
\]
Since the left-hand side equals $\mathcal E(u,\chi)$, the boundary term
vanishes. The same argument at $D$ yields
\[
D(\mathsf L_\varphi)\subseteq D(\mathsf L_N),
\]
where $\mathsf L_N=-\varphi^{-1}(\varphi u')'$ with
\[
\lim_{t\to0^+}\varphi(t)u'(t)
=
\lim_{t\to D^-}\varphi(t)u'(t)
=0.
\]
Conversely, Green's formula and the endpoint estimates above show that
$\mathsf L_N$ is symmetric. Hence
\[
\mathsf L_\varphi
\subseteq
\mathsf L_N
\subseteq
\mathsf L_N^*
\subseteq
\mathsf L_\varphi^*
=
\mathsf L_\varphi,
\]
so $\mathsf L_\varphi=\mathsf L_N$. The boundary conditions are therefore
separated, with one natural condition at each endpoint.
\end{proof}

\begin{proof}[Proof of Theorem~\ref{thm:slp_discrete}]
By Lemma~\ref{lem:endpoints}, $\mathsf L_\varphi$ is a self-adjoint
Sturm--Liouville realization with $p=w=\varphi>0$, $q=0$, separated boundary
conditions, and endpoints that are regular or LCNO. Hence
\cite[Thm.~10.12.1(3),(4)]{Zettl} (see also \cite[Thm.~10.6.2]{Zettl})
gives a discrete spectrum bounded below, with simple eigenvalues
\[
\mu_0<\mu_1<\mu_2<\cdots,
\]
and an eigenfunction corresponding to $\mu_j$ has exactly $j$ zeros in
$(0,D)$. Since $\mathcal E$ is nonnegative and vanishes on the constants,
while $\varphi\,dt$ is finite, $\mu_0=0$ and $\mu_j>0$ for $j\ge1$. This proves
Theorem~\ref{thm:slp_discrete}.
\end{proof}

\subsection{The Cheeger energy of a weighted interval}\label{subsec:cheeger}

\begin{lemma}\label{lem:cheeger-identification}
Let $X=([0,D],|\cdot|,c\,\varphi\,ds)$, where $c>0$ and $\varphi$ satisfies
\eqref{eq:concavity} and \eqref{eq:normalization}, and let
\[
0=\lambda_0(X)\le\lambda_1(X)\le\cdots
\]
be the eigenvalues of its Neumann Laplacian. Then the Cheeger energy of $X$
is the quadratic form
\[
\mathcal E_X(u)=c\int_0^D |u'|^2\varphi\,ds,
\]
and
\[
\lambda_j(X)\ge\mu_j(\varphi)
\qquad\text{for every }j\ge0.
\]
\end{lemma}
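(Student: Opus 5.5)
The plan is to identify the Cheeger energy of $X$ with the weighted Sobolev form, so that the Neumann Laplacian of $X$ is the operator of Lemma~\ref{lem:endpoints}, and then compare spectra by min--max. Since $\lambda_j(X)$ and $\mu_j(\varphi)$ are both defined by min--max over the same kind of Rayleigh quotient, the inequality is immediate once the form domains match. It would also follow from the weaker inclusion $D(\mathcal E_X)\subseteq H^1(\varphi)$ together with $\mathcal E_X\ge c\int|u'|^2\varphi$.

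\textbf{Upper bound on the Cheeger energy.} For $u$ Lipschitz on $[0,D]$, the local slope $|\nabla u|$ equals $|u'|$ a.e. Hence $\mathrm{Ch}(u)\le c\int|u'|^2\varphi$. By density of Lipschitz functions in $H^1(\varphi)$, this gives $\mathrm{Ch}\le\mathcal E_X$ on $H^1(\varphi)$. For the density step, I would truncate near endpoints where $\varphi$ vanishes, using $\varphi(t)\ge ct$, and the logarithmic cutoffs familiar from two-dimensional capacity.

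\textbf{Lower bound, the main step.} I must show that if $u\in L^2(\varphi)$ has finite Cheeger energy, then $u\in AC_{\rm loc}(0,D)$ with $u'\in L^2(\varphi)$ and $\mathrm{Ch}(u)\ge c\int|u'|^2\varphi$. On any compact $[a,b]\subset(0,D)$ the density is bounded above and below by positive constants. There the Cheeger energy restricted to $[a,b]$ dominates a constant multiple of the unweighted one, which is the classical $H^1(a,b)$ energy. The reason is that Lipschitz approximants $u_n\to u$ in $L^2(\varphi)$ with bounded $\int|u_n'|^2\varphi$ converge weakly in $H^1(a,b)$. Lower semicontinuity of $\int_a^b|u_n'|^2\varphi$ under weak $L^2$ convergence of $u_n'$ then gives $c\int_a^b|u'|^2\varphi\le\liminf c\int_a^b|u_n'|^2\varphi\le\liminf \mathrm{Ch}$-approximants. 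Letting $a\downarrow0$ and $b\uparrow D$ by monotone convergence yields the bound on $(0,D)$. Alternatively, one can invoke the general fact that on a one-dimensional weighted interval with locally bounded density, minimal weak upper gradients coincide with $|u'|$ a.e. This is the step I expect to need the most care, namely the identification of the minimal relaxed slope with $|u'|$. The cleanest route is probably through the local argument above rather than citing abstract theory.

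\textbf{Conclusion.} Combining both bounds gives $\mathrm{Ch}=\mathcal E_X$ with domain $H^1(\varphi)$. By Lemma~\ref{lem:endpoints}, the associated self-adjoint operator is $\mathsf L_\varphi$ with the natural conditions, and the factor $c$ cancels in the Rayleigh quotient against the factor $c$ in the measure. The min--max characterization \eqref{eq:minmax} therefore gives $\lambda_j(X)=\mu_j(\varphi)$, and in particular $\lambda_j(X)\ge\mu_j(\varphi)$. If the density step fails, the inequality still survives: $D(\mathrm{Ch})\subseteq H^1(\varphi)$ with $\mathrm{Ch}\ge\mathcal E_X$ means the min--max for $X$ is taken over a smaller class with a larger numerator.
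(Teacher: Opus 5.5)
Your proposal is correct and follows the same route as the paper---the inclusion of the Cheeger form domain in $H^1(\varphi)$ with $\mathrm{Ch}\ge c\int_0^D|u'|^2\varphi\,ds$, then min--max with the factor $c$ cancelling---except that you actually prove that inclusion (weak compactness in $H^1(a,b)$, lower semicontinuity, and monotone convergence as $a\downarrow0$, $b\uparrow D$), which the paper only asserts. Your upper bound also upgrades the inclusion to equality of the forms, hence $\lambda_j(X)=\mu_j(\varphi)$, and this equality is what the lemma's first assertion, that the Cheeger energy \emph{is} the weighted form, really requires.

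For that density step, truncate $u$ to bounded values, freeze it to be constant near each endpoint and smooth it in the interior, rather than using logarithmic cutoffs to zero. Such cutoffs only work when $\int_0\varphi^{-1}\,dt=\infty$, and the bound $\varphi\ge ct$ points the wrong way for this. They fail, e.g., when $\varphi(0)>0$ or $\varphi\sim\sqrt t$, because there the endpoint value is continuous in the energy norm. None of this affects the inequality $\lambda_j(X)\ge\mu_j(\varphi)$.
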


\begin{proof}
The factor $c$ multiplies both the Cheeger energy and the reference measure,
so it does not affect the Rayleigh quotient. By Lemma~\ref{lem:endpoints},
the operator associated with $\mathcal E$ is $\mathsf L_\varphi$ with the
natural condition $\lim\varphi u'=0$ at both endpoints. The form domain of
the Cheeger energy is contained in $H^1(\varphi)$. Hence, by the min--max
principle,
\[
\lambda_j(X)\ge\mu_j(\varphi)
\qquad\text{for every }j\ge0.
\]
\end{proof}

\bibliographystyle{amsplain}
\bibliography{ev}

\end{document}